\newtheorem{theorem}{Theorem}[section]
\newtheorem{proposition}[theorem]{Proposition}
\newtheorem{lemma}[theorem]{Lemma}
\newtheorem{claim}[theorem]{Claim}
\newtheorem{corollary}[theorem]{Corollary}
\newtheorem{conjecture}[theorem]{Conjecture}
\newcommand{\ILF}[2]{{#1}/{#2}}
\newcommand{\kPartite}[2]{K_{#1} ^{#2}}
\begin{document}

\title{Calculating Ramsey Numbers by partitioning coloured graphs}

\author{\large{Alexey Pokrovskiy} \thanks{Research supported by the LSE postgraduate research studentship scheme.} 
\\
\\Department of Mathematics,
\\ {Freie Universit\"at,} 
\\ Berlin, Germany. 
\\ {Email: \texttt{alja123@gmail.com}}
\\ 
\\ \small Keywords: Ramsey Theory, monochromatic subgraphs, partitioning graphs.}

\maketitle

\begin{abstract}
%A classic result of Gerencs\'er and Gy\'arf\'as is that every edge 2-coloured complete graph can be covered with two disjoint monochromatic paths.  A corollary of this result is that the Ramsey Number $R(P_n, P_n)$ is at most $2n$ i.e. that every edge 2-coloured complete graph on at least $2n$ vertices contains a monochromatic path on $n$ vertices.
%This theorem of Gerenc\'er and Gy\'arf\'as has led to many results about partitioning coloured graphs into monochromatic subgraphs.  However, very few of these results have been used for bounding Ramsey Numbers.

In this paper we prove a new result about partitioning coloured complete graphs and use it to determine certain Ramsey Numbers exactly.
The partitioning theorem we prove is that for $k\geq 1$,  in every edge colouring of $K_n$ with the colours red and blue, it is possible to cover all the vertices with $k$ disjoint red paths and a disjoint blue balanced complete $(k+1)$-partite graph.  
When the colouring of $K_n$ is connected in red, we prove a stronger result---that it is possible to cover all the vertices with $k$ red paths and a blue balanced complete $(k+2)$-partite graph.  

Using these results we determine the Ramsey Number of a path, $P_n$, versus a balanced complete $k$-partite graph, $\kPartite{m}{t}$, whenever $m\equiv 1 \pmod{n-1}$.  We show that in this case $R(P_n, \kPartite{m}{t})= (t-1)(n-1)+t(m-1)+1$, generalizing a result of Erd\H{o}s who proved the $m=1$ case of this result.
We also determine the Ramsey Number  of a path $P_n$ versus the power of a path $P^t_n$.  We show that $R(P_n, P_n^t)=t(n-1)+\left\lfloor \frac{n}{t+1} \right\rfloor$, solving a conjecture of Allen, Brightwell, and Skokan.
\end{abstract}

\section{Introduction}
Ramsey Theory is a branch of mathematics concerned with finding ordered substructures in a mathematical structure which may, in principle, be highly disordered.  An early example of a result in Ramsey Theory is a theorem due to Van der Waerden \cite{VDW}, which says that for for any $k$ and $r\geq 1$ there is a number $W(k, r)$, such that any colouring of the numbers $1,2, \dots, W(k,r)$ with $r$ colours contains a monochromatic $k$-term arithmetic progression.  
A special case of a theorem due to Ramsey \cite{Ramsey} says that for every $n$, there exists a number $R(n)$, such that every 2-edge-coloured complete graph on more than $R(n)$ vertices contains a monochromatic complete graph on $n$ vertices.  The number $R(n)$ is called a \emph{Ramsey number}.

A central definition in Ramsey Theory is the \emph{generalized Ramsey number} $R(G)$ of a graph~$G$: the minimum $n$ for which every 2-edge-colouring of $K_n$ contains a monochromatic copy of~$G$.    For a pair of graphs $G$ and $H$ the Ramsey number of $G$ versus $H$, $R(G,H)$, is defined to be the minimum $n$ for which every 2-edge-colouring of $K_n$ with the colours red and blue contains either a red copy of $G$ or a blue copy of $H$. 
Although there have been many results which give good bounds on Ramsey numbers of graphs \cite{RamseyBook}, the exact value of the Ramsey number $R(G,H)$ is only known when $G$ and $H$ each belong to one of a few families of graphs.

One of the first Ramsey numbers to be determined exactly was the Ramsey number of the path.
\begin{theorem}[Gerencs\'er and Gy\'arf\'as, \cite{Gerencser}]\label{PathRamsey}
For $m\leq n$ we have that
$$R(P_n,P_m)= n+\left\lfloor\frac{m}{2}\right\rfloor-1.$$
\end{theorem}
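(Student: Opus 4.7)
To show $R(P_n,P_m)\geq n+\lfloor m/2\rfloor -1$, I would exhibit a 2-edge-colouring of $K_{n+\lfloor m/2\rfloor -2}$ with neither a red $P_n$ nor a blue $P_m$. Partition the vertex set as $V=A\sqcup B$ with $|A|=n-1$ and $|B|=\lfloor m/2\rfloor -1$; colour every edge inside $A$ or inside $B$ red, and every edge between $A$ and $B$ blue. The red subgraph is then $K_{n-1}\sqcup K_{\lfloor m/2\rfloor -1}$, whose longest path has only $n-1$ vertices. The blue subgraph is the complete bipartite graph $K_{n-1,\lfloor m/2\rfloor -1}$; since $m\leq n$ gives $n-1>\lfloor m/2\rfloor -1$, its longest path has $2\lfloor m/2\rfloor -1\leq m-1$ vertices, so contains no $P_m$.

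\textbf{Upper bound.} For the matching upper bound, set $N=n+\lfloor m/2\rfloor -1$ and consider any 2-edge-colouring of $K_N$. My plan rests on an auxiliary partition lemma, provable by a standard exchange argument on longest monochromatic paths: every 2-edge-coloured complete graph admits a vertex partition $V=V_R\sqcup V_B$ such that $V_R$ carries a red Hamilton path and $V_B$ carries a blue Hamilton path. Among all such partitions of our $K_N$ I would pick one maximizing $|V_R|$, and write $P_R=r_1\cdots r_\ell$, $P_B=b_1\cdots b_k$ with $\ell+k=N$. If $\ell\geq n$ then $P_R$ already contains a red $P_n$. Otherwise $\ell\leq n-1$, so $k\geq\lfloor m/2\rfloor$, and the maximality of $|V_R|$ forces each of the four corner edges $r_1b_1$, $r_1b_k$, $r_\ell b_1$, $r_\ell b_k$ to be blue, since any red one would let us prepend or append a $b_i$ to $P_R$ and so enlarge $V_R$. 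Concatenating then yields a blue path $r_1b_1\cdots b_k r_\ell$ of length $k+2$.

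\textbf{Main obstacle.} The remaining difficulty is to bootstrap this $k+2$ up to $m$. I would iterate the argument by applying P\'osa-style rotations to both paths: rotations on $P_R$ enlarge the pool of vertices that can legitimately play the role of an endpoint in the four-edge analysis, while rotations on the current blue path absorb further vertices of $V_R$ at its ends. Each step either grows the blue path by at least one vertex or exposes a red path on $n$ vertices, so the process must terminate with a monochromatic path of the required length. The delicate point — and the main technical hurdle — is showing that the rotation/absorption process cannot stall while both $\ell<n$ and the blue path has fewer than $m$ vertices; this is where the precise cut-off $N=n+\lfloor m/2\rfloor -1$ is used, and the parity of $m$ requires slightly separate bookkeeping for the even and odd cases.
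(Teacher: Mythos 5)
This statement is quoted in the paper from Gerencs\'er and Gy\'arf\'as; the paper gives no proof of it (it only proves the related partition result, Theorem~\ref{GerencserGyarfas}), so your proposal can only be judged on its own merits. Your lower bound is correct: the colouring with red $K_{n-1}\sqcup K_{\lfloor m/2\rfloor-1}$ and blue $K_{n-1,\lfloor m/2\rfloor-1}$ has longest red path $n-1$ and longest blue path $2\lfloor m/2\rfloor-1\leq m-1$, so $R(P_n,P_m)>n+\lfloor m/2\rfloor-2$.

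The upper bound, however, has a genuine gap, and it sits exactly where the content of the theorem lies. Your exchange argument (maximize $|V_R|$ over partitions into a red-spanned and a blue-spanned part, then use the four corner edges) is sound as far as it goes, but it only produces a blue path on $k+2\geq\lfloor m/2\rfloor+2$ vertices, whereas you need $m$; the deficit is about $m/2$ vertices, not a constant. The ``P\'osa-style rotation/absorption'' step that is supposed to close this deficit is not an argument: you give no mechanism by which a rotation of the red path creates new blue absorbable endpoints, no invariant showing each step makes progress, and no place where the precise value $N=n+\lfloor m/2\rfloor-1$ is actually used. Note that the partition fact you invoke is exactly Theorem~\ref{GerencserGyarfas} of this paper, and as the paper itself observes, that fact only yields $R(P_n,P_m)\leq n+m-1$; to reach $n+\lfloor m/2\rfloor-1$ you must show the blue side can absorb roughly $\lceil m/2\rceil$ vertices of $V_R$ into a single blue path, which is essentially the Gerencs\'er--Gy\'arf\'as theorem itself. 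So the proposal proves the easy half and the weak bound $n+m-1$, but the sharp upper bound remains unproved; a correct proof needs a different idea (e.g.\ the original argument of Gerencs\'er and Gy\'arf\'as, or an argument via longest-path endpoints/Erd\H{o}s--Gallai-type counting), not just endpoint exchanges on the two-path partition.
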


In the same paper where Gerencs\'er and Gy\'arf\'as proved Theorem~\ref{PathRamsey}, they also proved the following.
\begin{theorem}[Gerencs\'er and Gy\'arf\'as, \cite{Gerencser}]\label{GerencserGyarfas}
Every 2-edge-coloured complete graph can be covered by two disjoint monochromatic paths of different colours.
\end{theorem}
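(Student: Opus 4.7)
The plan is to use the extremal method. I would consider all pairs $(P_R, P_B)$ of vertex-disjoint paths in $K_n$ in which all edges of $P_R$ are red and all edges of $P_B$ are blue, allowing either path to be empty or a single vertex, and choose such a pair that maximises the total number of covered vertices $|V(P_R)| + |V(P_B)|$. The goal is to show that any such extremal pair covers every vertex.

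Suppose for contradiction that some vertex $v$ is not covered. Write $P_R = r_1 \ldots r_a$ and $P_B = b_1 \ldots b_s$, and focus on the endpoints $r_a$ and $b_s$. If $v r_a$ is red then $v$ can be appended to $P_R$, contradicting extremality; symmetrically, if $v b_s$ is blue then $v$ can be appended to $P_B$. So we may assume that $v r_a$ is blue and $v b_s$ is red. The degenerate cases when $a = 0$ or $s = 0$ are handled directly, since an uncovered vertex can always be added as a new singleton path in whichever of the two colours is currently vacant.

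At this point I would examine the colour of the crossing edge $r_a b_s$. If $r_a b_s$ is red, perform the swap that moves $b_s$ onto the end of the red path: set $P_R' = r_1 \ldots r_a b_s$ and $P_B' = b_1 \ldots b_{s-1}$. This is still a valid disjoint pair of the same total size, but now the red endpoint is $b_s$, and since $v b_s$ is red we can further append $v$ to $P_R'$ to obtain a strictly larger pair, a contradiction. Symmetrically, if $r_a b_s$ is blue then $r_a$ can be moved onto the end of the blue path, after which $v$ can be appended along the blue edge $v r_a$.

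The main obstacle is the case in which neither endpoint of the two paths gives an immediate extension; the key insight is that whatever the colour of $r_a b_s$ turns out to be, it dictates exactly which of the two endpoints should be transferred so that the receiving path acquires a new endpoint joined to $v$ by an edge of the correct colour. An equivalent proof by induction on $n$ works by deleting a vertex, applying the inductive hypothesis, and then running the same swap-and-append analysis when reinserting the vertex; I would prefer the extremal formulation since it avoids tracking which vertex was deleted.
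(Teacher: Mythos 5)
Your proof is correct and follows essentially the same argument as the paper: take a disjoint red/blue path pair maximising the number of covered vertices, and use the triangle on an uncovered vertex $v$ and the two endpoints $r_a,b_s$ to extend the cover, contradicting maximality. Your swap-and-append case analysis on the colour of $r_ab_s$ is just an explicit rendering of the paper's observation that this triangle contains a red path from $v$ to $r_a$ or a blue path from $v$ to $b_s$.
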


The proof of Theorem~\ref{GerencserGyarfas} is so short that it was originally published in a footnote of~\cite{Gerencser}.  Indeed to see that the theorem holds, simply find a red path $R$ in $K_n$ and a disjoint blue path $B$ in $K_n$ such that $|R|+|B|$ is as large as possible.  Let $r$ and $b$ be the endpoints of $R$ and $B$ respectively.  If there is a vertex $x \not\in R\cup B$, then it is easy to see that the triangle $\{x,r,b\}$ contains either a red path between $x$ and $r$ or a blue path between $x$ and $b$.  This path can be joined to $R$ or $B$ contradicting maximality of $|R|+|B|$.

Any result about vertex-partitioning coloured graphs into a small number of monochromatic subgraphs will imply a Ramsey-type result as a corollary.
For example Theorem~\ref{GerencserGyarfas} implies the bound $R(P_n, P_m)\leq n+m-1$.  Indeed Theorem~\ref{GerencserGyarfas} shows that every 2-edge-coloured $K_{n+m-1}$ can be covered by a red path $R$ and a disjoint blue path~$B$.  Clearly these paths cannot cover all the vertices unless $|R|\geq n$ or $|B|\geq m$.
This is the main technique we shall use to bound Ramsey numbers in this paper.

Although Theorem~\ref{GerencserGyarfas} originated as a technique to bound Ramsey Numbers, it subsequently gave birth to the area of partitioning edge-coloured complete graphs into monochromatic subgraphs.
There have been many further results and conjectures in this area, many of which generalise Theorem~\ref{GerencserGyarfas}. One particularly relevant conjecture which attempts to generalize Theorem~\ref{GerencserGyarfas} is the following.
\begin{conjecture} [Gy\'arf\'as,~\cite{Gyarfas}] \label{Gyarfas}  
The vertices of every $r$-edge-coloured complete graph can be covered with $r$ vertex-disjoint monochromatic paths.
\end{conjecture}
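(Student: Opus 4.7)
The plan is to attempt an extremal argument that mimics the proof of Theorem~\ref{GerencserGyarfas}. Choose $r$ vertex-disjoint monochromatic paths $P_1,\ldots,P_r$ covering the maximum possible number of vertices, with $P_i$ of colour $c_i$ and endpoint $v_i$. Assuming for contradiction that some vertex $x$ is uncovered, maximality immediately forces the edge $xv_i$ to have colour different from $c_i$ for every $i$ (otherwise $P_i$ could simply be lengthened by $x$). This constrains the $r$ edges from $x$ to the chosen endpoints and is the starting point for any rearrangement.

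From here I would look for a short reroute. If $xv_i$ has colour $c_j$ for some $j\neq i$, the natural move is to splice $x$ onto the end of $P_j$, detach $v_i$ from $P_i$, and try to absorb $v_i$ into some other path using an edge of colour $c_k$ at an appropriate endpoint. After at most $r$ such swaps one should either close the shuffle and extend the cover by $x$, or produce a combinatorially restricted configuration that can be handled by a secondary argument. When no edge $xv_i$ uses any colour in $\{c_1,\ldots,c_r\}$ at all, I would try to re-choose the path colours so that all "active" colours of the graph appear among the $c_i$, or else peel off one colour class and induct on $r$.

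The main obstacle, and the reason this statement is so delicate, is that the triangle pigeonhole powering the two-colour case does not generalise: for $r\geq 3$ the adversary can colour the $\binom{r+1}{2}$ edges on $\{x,v_1,\ldots,v_r\}$ so as to simultaneously block every short local swap. Accordingly I would expect a real proof to require either the regularity method (reducing to a question about $r$ disjoint monochromatic connected matchings in a reduced graph, then lifting back via a blow-up lemma) or a more sophisticated global swap-and-absorb scheme. The hardest step, I anticipate, is classifying the near-extremal colourings that survive all local swaps; this is precisely the point at which the conjecture becomes very hard, and indeed it is known only for $r\leq 2$ in full generality (Theorem~\ref{GerencserGyarfas}) and asymptotically for $r=3$ (Gy\'arf\'as, Ruszink\'o, S\'ark\"ozy, and Szemer\'edi), so I would not expect my plan to go through unchanged for larger $r$.
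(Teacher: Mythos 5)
The statement you were asked about is not proved in this paper at all: it is stated as Conjecture~\ref{Gyarfas}, quoted as background motivation, and the only cases settled anywhere are $r\leq 2$ (Theorem~\ref{GerencserGyarfas}) and $r=3$, the latter proved by the author in a separate paper \cite{PokrovskiyCycles} using Theorem~\ref{GyarfasLehel} and Lemma~\ref{PathBipartite}, not by the local-swap argument you outline. So there is no proof in the paper to compare against, and your proposal does not supply one either: it is a programme, and you yourself concede at the end that it does not go through.

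The concrete gap is the ``secondary argument'' you defer to. Maximality of the cover only tells you that $xv_i$ is not of colour $c_i$; it says nothing that lets you splice $x$ onto $P_j$ (the edge $xv_i$ of colour $c_j$ lands at $v_i$, not at an endpoint of $P_j$), and after one or two swaps you lose all control over which colours sit at which endpoints, so the shuffle has no termination invariant. Your two fallback ideas also fail as stated: deleting a colour class does not leave a complete graph, so there is no induction on $r$ available in this form, and ``re-choosing the path colours'' is not a well-defined operation on an extremal cover. What actually rescues the small cases in the literature is structural information about monochromatic connected components (Theorem~\ref{GyarfasLehel}-type statements, or Lemma~\ref{PathBipartite}, which trades one of the paths for a balanced bipartite graph), not a refinement of the triangle pigeonhole from the $r=2$ proof. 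As written, then, your argument proves nothing beyond the already-known two-colour case; the classification of colourings surviving all local swaps, which you flag as the hard step, is exactly the missing content, and for $r\geq 4$ the conjecture remains open, so no elementary completion of your sketch should be expected.
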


Although Theorems~\ref{PathRamsey} and~\ref{GerencserGyarfas} have both led to many generalizations, there have not been many further attempts to use results about partitioning coloured graphs in order to bound Ramsey Numbers.  A notable exception is the following result of Gy\'arf\'as and Lehel.
\begin{theorem}  [Gy\'{a}rf\'{a}s \& Lehel,~\cite{Gyarfas2, Lehel}]   \label{GyarfasLehel}
Suppose that the edges of $K_{n,n}$ are coloured with two colours such that one of the parts of $K_{n,n}$ is contained in a monochromatic connected component.   Then there exist two disjoint monochromatic paths with different colours which cover all, except possibly one, of the vertices of $K_{n,n}$.
\end{theorem}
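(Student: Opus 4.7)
The plan is to follow the short proof of Theorem~\ref{GerencserGyarfas} recalled above, modified to handle the bipartite structure. Among all pairs of disjoint monochromatic paths $R$ (red) and $B$ (blue) in $K_{n,n}$, I would pick one maximising $|R|+|B|$; write $R=r_1\dots r_k$, $B=b_1\dots b_\ell$, and let $U$ denote the set of vertices covered by neither. For a contradiction, assume $|U|\ge 2$.

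First I would record the constraints forced by maximality: for each endpoint $e$ of $R$ and each $u\in U$ lying in the opposite part of the bipartition the edge $eu$ must be blue (else $R$ extends), and analogously for $B$. Whenever an uncovered $u\in U$ together with an endpoint $r$ of $R$ and an endpoint $b$ of $B$ actually span a triangle of $K_{n,n}$---that is, the three vertices together meet both parts---the Gerencs\'er--Gy\'arf\'as triangle trick applies verbatim: whichever colour the edge $rb$ has, the corresponding swap strictly increases $|R|+|B|$, a contradiction. This disposes of every configuration except the degenerate bipartite ones where no such triangle exists; the typical obstruction is that both endpoints $r_k,b_\ell$ lie in the same part, or that parity forces all of $U$ into a single part.

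For these remaining cases I would invoke the hypothesis that one of the parts, say $X$, lies in a single monochromatic component, which we may assume is red. If $x\in U\cap X$, then $x$ is joined by a red walk to every other vertex of $X$, and in particular to the appropriate endpoint of $R$; extracting a red $x$-to-$R$ path from this walk and reattaching produces an extended Gerencs\'er--Gy\'arf\'as swap that still strictly increases $|R|+|B|$, a contradiction. The main obstacle I anticipate is organising the case analysis cleanly: the endpoints of $R$ and $B$ each lie in $X$ or $Y$ depending on the parities of $k,\ell$, and $U$ can meet the two parts in several ways, so one must verify that the red-connectivity of $X$ kills precisely those bipartite parities that the triangle argument misses.
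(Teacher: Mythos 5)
You should first note that the paper does not prove Theorem~\ref{GyarfasLehel} at all: it is quoted from Gy\'arf\'as and Lehel, so there is no in-paper proof to compare against, and your sketch has to stand on its own. It does not, because of two concrete gaps. First, $K_{n,n}$ is triangle-free, so three vertices that ``meet both parts'' never span a triangle, only a path with two edges; hence the Gerencs\'er--Gy\'arf\'as trick never ``applies verbatim''. Work out the case $u,r\in X$, $b\in Y$ (edge $ur$ absent): the available moves are to extend $B$ by $u$ if $ub$ is blue, or to reroute to the red path $R+b+u$ if $rb$ and $bu$ are both red; but if $ub$ is red and $rb$ is blue, neither applies, and moving $r$ onto $B$ is only a net-zero exchange, so maximality of $|R|+|B|$ gives no contradiction. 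The symmetric configuration $u\in X$, $r,b\in Y$ with $ur$ blue and $ub$ red (edge $rb$ absent) is stuck in the same way. These stuck configurations are exactly why the bipartite statement is a genuine theorem with a weaker conclusion (one vertex may remain uncovered) rather than a footnote-length observation.

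Second, the step where you invoke the hypothesis is asserted rather than proved. The monochromatic (say red) component containing the part $X$ does join an uncovered $x\in X$ to every vertex of $X$ by a red walk, but that walk will in general run through interior vertices of $R$ and of $B$, in both parts; ``extracting a red $x$-to-$R$ path and reattaching'' is then not an operation that leaves you with two disjoint monochromatic paths, and it need not increase $|R|+|B|$. Indeed, your own first observation rules out the easy version of this step: the red connection cannot arrive at an endpoint of $R$ directly from an uncovered vertex of the opposite part, since any such edge is forced to be blue by maximality. So any use of red-connectivity must restructure $R$ and $B$ globally (splitting and recombining segments of both paths), and organising that restructuring is precisely the content of the Gy\'arf\'as--Lehel proof. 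As written, your argument reduces the theorem to the hard degenerate cases and then asserts them; the missing case analysis is the theorem.
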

Gy\'arf\'as and Lehel used this result to determine the bipartite Ramsey Number of a path i.e. the smallest $n$ for which every 2-edge-coloured $K_{n,n}$ contains a red copy of $P_i$ or a blue $P_j$.
Recently Theorem~\ref{GyarfasLehel} was used by the author in the proof of the $r=3$ case of Conjecture~\ref{Gyarfas} \cite{PokrovskiyCycles}.

In this paper we prove a new theorem about partitioning 2-edge-coloured complete graphs, and use it to determine certain Ramsey Numbers exactly.
Our starting point will be a lemma used by the author in the proof of the $r=3$ case of Conjecture~\ref{Gyarfas}.  

A complete bipartite graph is called \emph{balanced} if both of its parts have the same order.
The following lemma appears in~\cite{PokrovskiyCycles}.

\begin{lemma}\label{PathBipartite}
 Suppose that the edges of $K_n$ is coloured with two colours.  Then $K_n$ can be covered by a red path and a disjoint blue balanced complete bipartite graph.
\end{lemma}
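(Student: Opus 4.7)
The plan is to argue by contradiction via an extremal configuration, in the spirit of the proof of Theorem~\ref{GerencserGyarfas}. Consider the collection of triples $(P, A, B)$ where $P$ is a (possibly empty) red path in the coloured $K_n$ and $A, B$ are disjoint subsets of $V(K_n) \setminus V(P)$ with $|A| = |B|$ and every edge between $A$ and $B$ coloured blue. Choose such a triple maximising the covered vertex count $|V(P)| + |A| + |B|$, breaking ties by minimising $|V(P)|$. The claim to prove is that the chosen triple covers every vertex of $K_n$.

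Assume for contradiction that some vertex $v$ is uncovered, and note that $V(P) \neq \emptyset$ (else $(\{v\}, A, B)$ would strictly beat $(P, A, B)$). Let $p_1, p_2$ be the endpoints of $P$ (equal if $|V(P)|=1$). First, both $vp_1$ and $vp_2$ must be blue, since any red such edge would allow us to extend $P$ by $v$. Partition $A = A_r \sqcup A_b$ according to whether the edge from $v$ is red or blue, and similarly $B = B_r \sqcup B_b$. The remainder is a case analysis aimed at exhibiting a triple of strictly larger coverage in every situation. The principal swap moves are: \emph{(i) extension through $v$}: if $p_2 a$ is red for some $a \in A_r$ and $vb$ is red for some $b \in B_r$, then $P + a + v + b$ is a red path while $(A \setminus \{a\},\, B \setminus \{b\})$ remains balanced and blue-joined; \emph{(ii) endpoint swap}: if $v$ is blue to all of $A$ and $p_2$ is blue to all of $B$, then $(P - p_2,\; A \cup \{p_2\},\; B \cup \{v\})$ is a valid triple of larger coverage; and \emph{(iii) path restart}: when $v$ has red edges to both $A$ and $B$ but neither endpoint of $P$ sends a red edge to any vertex of $A_r \cup B_r$, replace $P$ by a short red path through $v$ (such as $avb$) and absorb the vertices of $V(P)$ into one side of the new bipartite structure, together with the leftover elements of $A$ and $B$.

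The main obstacle I expect is verifying the feasibility of move (iii). Placing $V(P)$ entirely on one side of the new bipartite pair fails if the old path contains an interior vertex with a red edge to the opposite side, since this red edge would then lie between the two sides of the blue bipartite graph. Handling this will require either a rotation or Pósa-type argument along $P$ to arrange that every vertex of $V(P)$ sends only blue edges to the relevant part, or the careful use of the secondary extremal condition minimising $|V(P)|$: any red chord from an interior vertex of $P$ to the bipartite side should yield a rearranged triple with strictly fewer path vertices but at least as large a coverage, contradicting the choice of $(P,A,B)$. Assembling these swaps into a complete case analysis completes the proof.
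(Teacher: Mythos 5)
Your setup does not close: the argument stands or falls on move \emph{(iii)}, and as you yourself note, that move is not justified. Absorbing all of $V(P)$ into one side of the new bipartite pair requires every vertex of $P$ --- interior vertices included --- to send only blue edges to the opposite side, and nothing in your extremal choice gives this. The two rescue ideas you sketch do not obviously work either: the secondary condition minimises $|V(P)|$ only among triples of \emph{maximum coverage}, so a red chord from an interior vertex of $P$ into $A\cup B$ does not automatically yield a comparable triple with a shorter path (rerouting the path through such a chord typically removes vertices from $A$ or $B$, and restoring the balance $|A|=|B|$ costs you further vertices, so coverage can drop). A rotation/P\'osa argument is a genuinely nontrivial extra ingredient that you have not supplied. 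Moreover the case analysis as listed is not exhaustive even before (iii): for instance, if $v$ has a red edge into $A$ but is blue to all of $B$, while the endpoints of $P$ have red edges into $B$ only, none of (i), (ii), (iii) applies as stated. The underlying source of all this trouble is your choice of extremal configuration: by insisting that the blue pair stay balanced at every step and maximising coverage, you force yourself to prove that the maximum covers everything, and the ``restart'' situations are exactly where that is hard.

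The paper avoids the problem by choosing a different extremal object. It always keeps a partition of \emph{all} of $V(K_n)$ into a red path $P$ and a complete bipartite graph $B(X,Y)$ whose parts need not be balanced (one part may even be empty, so such a partition trivially exists), and it minimises $\max(|X|,|Y|)$ first and $|P|$ second; the balance $|X|=|Y|$ is the conclusion, not an invariant. With that choice only two one-vertex moves at a path endpoint are ever needed: if the endpoint $p$ has a red edge into the larger part, extend $P$ by that vertex (decreasing the maximum part size); if all edges from $p$ to the larger part are blue, move $p$ into the \emph{smaller} part (same maximum, shorter path). No restart step, no interior-vertex condition, and no incomplete case split arises. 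If you want to salvage your approach you would have to either carry out the rotation argument in full or switch to an extremal function, as in the paper, for which every bad configuration is repaired by a single-vertex move.
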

Lemma~\ref{PathBipartite} immediately implies the bound $R(P_n,K_{m,m})\leq n+2m-2$.   It turns out that when $m \equiv 1 \pmod{n-1}$, this bound is best possible.  The following theorem was proved by H\"aggkvist.
\begin{theorem}[H\"aggkvist, \cite{Haggkvist}]\label{Haggkvist}
If $m,\ell \equiv 1 \pmod{n-1}$, then we have
$$R(P_n, K_{m,\ell})= n+m+\ell-2.$$ 
 \end{theorem}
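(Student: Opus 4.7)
The plan is to prove matching lower and upper bounds. Throughout, write $m-1=q_1(n-1)$ and $\ell-1=q_2(n-1)$ using the given divisibility conditions.

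For the lower bound $R(P_n,K_{m,\ell})\geq n+m+\ell-2$, I would partition a vertex set of size $n+m+\ell-3=(q_1+q_2+1)(n-1)$ into $q_1+q_2+1$ blocks of size $n-1$, colouring every edge inside a block red and every remaining edge blue. Each red component is then a $K_{n-1}$ and contains no $P_n$. For a hypothetical blue $K_{m,\ell}$ with parts $X$ and $Y$, each block $B_i$ meets at most one of $X,Y$ (otherwise an intra-block red edge would sit between the two sides), so $X$ and $Y$ are contained in disjoint collections of $|I|$ and $|J|$ blocks; the inequalities $m>q_1(n-1)$ and $\ell>q_2(n-1)$ force $|I|\geq q_1+1$ and $|J|\geq q_2+1$, contradicting $|I|+|J|\leq q_1+q_2+1$.

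For the upper bound, set $N=n+m+\ell-2$ and consider any red/blue edge-colouring of $K_N$ with no red $P_n$. I would apply Lemma~\ref{PathBipartite} to cover $V(K_N)$ by a red path $R$ and a disjoint blue balanced complete bipartite graph $K_{a,a}$ with parts $A_1,A_2$. Since $R$ contains no $P_n$ it has at most $n-1$ vertices, so $2a\geq m+\ell-1$. Assuming without loss of generality $m\leq\ell$, the balanced subcase $m=\ell$ is immediate: $a\geq m$ exhibits the required blue $K_{m,m}\subseteq K_{a,a}$.

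The principal obstacle is the unbalanced subcase, $m<\ell$ and $a<\ell$, in which $K_{a,a}$ alone is too small and one must press the vertices of $V(R)$ into service to enlarge the long side. The plan is a swap/augmentation argument: if at least $\ell-a$ vertices of $V(R)$ are joined entirely in blue to $A_1$, appending them to $A_2$ produces a blue $K_{a,\ell}\supseteq K_{m,\ell}$ and we are done; otherwise many vertices of $V(R)$ carry red edges into both $A_1$ and $A_2$, and one should be able to weave these red edges into the ends of $R$ to build a red path on $n$ vertices, contradicting the hypothesis. The delicate part of this programme is arranging the swaps to be compatible with the internal structure of $R$ and the bipartition of $K_{a,a}$; the divisibility hypothesis $m,\ell\equiv 1\pmod{n-1}$ enters here to make the counting close on the nose, mirroring its role in the lower bound construction above.
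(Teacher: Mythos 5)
Your lower bound is correct and is exactly the construction the paper describes after Theorem~\ref{Haggkvist} ($1+(m+\ell-2)/(n-1)$ red cliques $K_{n-1}$, all other edges blue), and your balanced subcase $m=\ell$ of the upper bound is also fine -- it is precisely the observation in the paper that Lemma~\ref{PathBipartite} gives $R(P_n,K_{m,m})\leq n+2m-2$.

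The unbalanced subcase, however, is a genuine gap, and it is the whole content of the theorem beyond the balanced case. Your dichotomy does not close: the negation of ``at least $\ell-a$ vertices of $V(R)$ are completely blue to $A_1$'' only says that many vertices of $R$ send \emph{some} red edge to $A_1$; it does not put red edges into both sides, and red edges from \emph{interior} vertices of $R$ into $A_1$ do not lengthen $R$, so there is no evident way to ``weave'' them into a red $P_n$. Worse, when $\ell$ is large compared with $n+m$ the scheme can fail for lack of raw material: if the partition from Lemma~\ref{PathBipartite} happens to return a very short (even empty) red path, then $a$ is roughly $(n+m+\ell-2)/2$, which can be strictly less than $\ell$ while $|R|<\ell-a$, so even moving every vertex of $R$ cannot enlarge one side to $\ell$, and there are no red edges available for the other branch -- yet your argument retains no other information about the edges inside $A_1$ and $A_2$. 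The fix the paper points to (Lemma~\ref{PathBipartiteUnbalanced} in the remarks) is to build the imbalance into the partition lemma itself: run the same extremal argument as for Lemma~\ref{PathBipartite} but restricted to partitions with $\big||X|-|Y|\big|\geq t$, where $t=\ell-m$; one then gets a red path plus a blue $K_{b,b+t}$, and $|R|\leq n-1$ forces $2b+t\geq m+\ell-1$, i.e.\ $b\geq m$, so the blue graph contains $K_{m,\ell}$. Note also that in this upper bound the divisibility hypothesis plays no role at all (the bound $R(P_n,K_{m,\ell})\leq n+m+\ell-2$ holds for all $m,\ell$); $m,\ell\equiv 1\pmod{n-1}$ is needed only to make your lower bound construction exist and be tight, so expecting it to rescue the counting in your augmentation step is a misdiagnosis.
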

The lower bound on Theorem~\ref{Haggkvist} comes from considering a colouring of $K_{n+m+\ell-3}$ consisting of $1+\ILF{(m+\ell-2)}{(n-1)}$ red copies of $K_{n-1}$ and all other edges are coloured blue.  The condition $m,\ell \equiv 1 \pmod{n-1}$ ensures that the number $1+\frac{(m+\ell-2)}{(n-1)}$ is an integer.

The main theorem about partitioning coloured graphs that we will prove in this paper is a generalization of Lemma~\ref{PathBipartite}.
Recall that a balaced complete $k$-partite graph, $\kPartite{m}{k}$, is a graph whose vertices can be partitioned into $k$ sets $A_1, \dots, A_k$ such that $|A_1|=\dots=|A_k|=m$ for all $i$, and there is an edge between $a_i\in A_i$ and $a_j\in A_j$ if, and only if, $i\neq j$.
We will prove the following.
\begin{theorem}\label{PathMultipartite}
 Let $k\geq 1$.  Suppose that the edges of $K_n$ are coloured with two colours.  Then $K_n$ can be covered by $k$ disjoint red paths and a disjoint blue balanced complete $(k+1)$-partite graph.
\end{theorem}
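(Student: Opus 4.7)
The plan is to prove Theorem~\ref{PathMultipartite} by induction on $k$. The base case $k=1$ is exactly Lemma~\ref{PathBipartite}. For the inductive step from $k-1$ to $k$, I would first apply the inductive hypothesis to $K_n$ to obtain $k-1$ disjoint red paths $P_1,\ldots,P_{k-1}$ and a disjoint blue balanced complete $k$-partite graph $B'$ with parts $A_1,\ldots,A_k$ of common size $m$, together covering $V(K_n)$. Among all such decompositions I would single out one with $m$ as large as possible, breaking ties by maximising $\sum_i |V(P_i)|$.

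Next I would apply Lemma~\ref{PathBipartite} to the $2$-coloured complete graph $K[A_1]$, obtaining a red path $P_k\subseteq A_1$ together with a blue balanced complete bipartite graph on parts $C_1,C_2\subseteq A_1$ of common size $m'\leq m/2$, where $V(P_k)\cup C_1\cup C_2=A_1$. Combining, the red paths $P_1,\ldots,P_k$ and the classes $C_1,C_2,A_2,\ldots,A_k$ partition $V(K_n)$; the latter classes form a blue complete $(k+1)$-partite graph, because $C_1$--$C_2$ is blue by Lemma~\ref{PathBipartite} and every other pair of classes lies inside distinct parts of $B'$, whose cross-part edges are all blue.

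The main obstacle is that this $(k+1)$-partite graph is typically unbalanced, since $|C_1|=|C_2|=m'\leq m/2$ while $|A_j|=m$ for $j\geq 2$. To rebalance without losing coverage, I would study two kinds of moves: transferring a vertex $v\in A_j$ into $C_1\cup C_2$ (legal when $v$'s edges to $A_j\setminus\{v\}$ are all blue, since its edges to the remaining classes are automatically blue), and extending some $P_i$ by a vertex $v\in A_j$ (legal when $v$ has a red edge to an endpoint of $P_i$). The plan is then to exploit the extremal choice of the initial decomposition: if no orchestration of such moves yields a balanced configuration, one should be able to bundle the obstructing vertices together with the $A_j$ they came from to build a blue balanced $k$-partite graph in $K_n$ with parts strictly larger than $m$, contradicting maximality. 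Designing this exchange argument — and verifying that it terminates at a balanced $(k+1)$-partite structure while keeping the red-path count at $k$ — is the key technical challenge of the proof.
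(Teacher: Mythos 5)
Your plan is not a proof: the step you yourself flag as ``the key technical challenge'' --- rebalancing the $(k+1)$ classes --- is exactly where the entire difficulty of the theorem lives, and nothing in the proposal shows the exchange argument can be made to work. Concretely, after applying Lemma~\ref{PathBipartite} inside $A_1$ the imbalance can be total: if $A_1$ spans a red clique then necessarily $C_1=C_2=\emptyset$, so to reach a balanced $(k+1)$-partite graph you must empty $A_2,\dots,A_k$ as well, absorbing all those vertices into only $k$ red paths; but a vertex of $A_j$ need not have a red edge to any path endpoint, and its having some blue edges into $A_j$ does not let you move it either, so both of your legal moves can be simultaneously unavailable while the configuration is still far from balanced. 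The hoped-for contradiction is also not forthcoming as stated: the extremal quantity $m$ is the part size in a decomposition consisting of $k-1$ red paths \emph{plus} a balanced blue $k$-partite graph covering everything, so to contradict maximality you must not only exhibit a blue balanced $k$-partite graph with larger parts, you must simultaneously re-cover all remaining vertices (including those currently sitting on red paths, over whose internal colouring you have no control) by $k-1$ red paths. The failure of local moves gives you blue edges from a few ``obstructing'' vertices to a few classes, which is nowhere near enough to produce such a decomposition.

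It is worth comparing with how the paper handles this. The paper does not induct on $k$ at all: it proves the stronger connected statement (Theorem~\ref{TreeMultipartite}, via Corollary~\ref{PathMultipartiteConnected}) and then deduces Theorem~\ref{PathMultipartite} by adding an apex vertex joined in red to everything. The strengthening is not incidental: the balancing is done by Lemma~\ref{Balancedlemma}, whose induction only closes because the statement is loaded with extra structure --- the paths are required to start in prescribed sets $N_i$ so they can later be re-attached to a red tree, and the part sizes are controlled by the quantitative hypotheses $|A_i|+|B_i|\geq|A_0|$ and $|B_i|\leq 2\min_t|B_t|$ (or $|A_i|+|B_i|\leq|A_0|+\min_t|B_t|$), which are in turn secured by a long extremal argument with eleven nested minimality conditions. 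Your naive induction on the statement of Theorem~\ref{PathMultipartite} itself carries none of this auxiliary structure, which is precisely why the rebalancing step has nothing to push against. If you want to salvage the inductive flavour, you should expect to have to strengthen the induction hypothesis along the lines of Lemma~\ref{Balancedlemma} rather than hope a generic exchange argument terminates.
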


As a corollary of Theorem~\ref{PathMultipartite} we obtain that for all $m$ satisfying $m\equiv 1 \pmod{n-1}$ we have $R(P_n, \kPartite{m}{t})= (t-1)(n-1)+t(m-1)+1$.
This generalizes a result of Erd\H{o}s who showed that $R(P_n, K_m)=(t-1)(n-1)+1$ (see \cite{ErdosRamsey, Parsons}).

Instead of proving Theorem~\ref{PathMultipartite} directly, we will actually prove a strengthening of it, and then deduce Theorem~\ref{PathMultipartite} as a corollary.  The strengthening that we prove is the following.
\begin{theorem}\label{TreeMultipartite}
 Let $k\geq 1$.  Suppose that the edges of $K_n$ are coloured with the colours red and blue, such that the red spanning subgraph is connected.  Then $K_n$ can be covered by red tree $T$ with at most $k$ leaves and a disjoint blue balanced complete $(k+2)$-partite graph.
\end{theorem}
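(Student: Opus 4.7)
The plan is to use an extremal argument on a suitable configuration space. Define $\mathcal{C}$ to be the family of tuples $(T, A_1, \ldots, A_{k+2})$ such that $T$ is a subtree of the red subgraph of $K_n$, the sets $A_1, \ldots, A_{k+2}$ are pairwise disjoint subsets of $V(K_n)\setminus V(T)$, every edge between two distinct $A_i$'s is blue, the collection $V(T), A_1, \ldots, A_{k+2}$ partitions $V(K_n)$, and $|A_1|=\cdots=|A_{k+2}|$. The family $\mathcal{C}$ is non-empty, because one may take $T$ to be a red spanning tree (which exists by the red-connectivity assumption) and each $A_i$ empty. Among all configurations in $\mathcal{C}$, I would choose one that maximises the common size $m:=|A_1|$, breaking ties by minimising the number of leaves of $T$.

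The goal is to show that in this extremal configuration, $T$ has at most $k$ leaves, which then gives the theorem. Suppose for contradiction that $T$ has $\ell\geq k+1$ leaves. I would examine, for each leaf $v$ of $T$, the colours of the edges from $v$ to the sets $A_j$ and to $V(T)\setminus\{v\}$, aiming to identify a local modification that violates the extremality. Two types of modification are available. The first is a \emph{tree rotation}: if some leaf $v$ has a red edge to a vertex $u\in V(T)$ distinct from its unique tree-neighbour, then deleting the tree-edge at $v$ and inserting the edge $vu$ produces a new red spanning tree on $V(T)$; by a careful choice of $v$ and $u$ (exploiting that $T$ has more leaves than strictly needed) this can be arranged to strictly decrease the number of leaves, contradicting the secondary extremality. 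The second is a \emph{blue exchange}: if some leaf $v$ has all blue edges to some set $A_j$, one may remove $v$ from $T$ and place it in $A_j$, giving $|A_j|=m+1$. To restore balance one then moves vertices between the $A_i$'s (using the all-blue inter-part edges) and, if necessary, pulls a vertex from some $A_i$ into $T$ along a red edge. The net effect is either to increase the common value $m$, contradicting the primary extremality, or to strictly reduce the leaf count of $T$, contradicting the secondary extremality. A counting argument, using $\ell\geq k+1$ and the red-connectivity of $K_n$, should guarantee that at least one of these two modifications always applies.

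The main obstacle is the blue-exchange step, where one must coordinate the removal of $v$ from $T$ with the subsequent rebalancing of the parts, all while controlling the number of leaves. Red connectivity is used essentially here: it ensures that any vertex which needs to be pulled into $T$ during rebalancing can be joined to the rest of $T$ by a red edge, perhaps after absorbing intermediate vertices along a red path. I expect that carrying out the rebalancing carefully, and verifying that it always produces a contradiction when $T$ has too many leaves, will be the technical heart of the proof. The general structure is reminiscent of the extremal arguments behind Theorem~\ref{GerencserGyarfas} and Lemma~\ref{PathBipartite}, but the bound on leaves of $T$ and the coordination across the $k+2$ parts make the rotation/exchange analysis substantially more delicate.
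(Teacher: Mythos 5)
Your plan founders on the interaction between the exact-balance constraint and the single-vertex local moves, and this is a genuine gap, not bookkeeping. In every configuration of your family we have $n=|V(T)|+(k+2)m$, so $|V(T)|\equiv n \pmod{k+2}$; consequently the ``blue exchange'' (deleting one leaf from $T$ and placing it in some $A_j$) never lands back in the family. To restore exact balance in the same step you would have to extract, net, a full transversal of $k+2$ vertices from $T$, one for each part, each having all blue edges to the other $k+1$ parts, placed in distinct parts, and all without disconnecting $T$ --- and since only leaves are safely removable, $\ell\geq k+1$ leaves cannot supply such a move even in principle, let alone when some leaf has a red edge into every part (in which case neither of your moves applies to it at all). ``Moving vertices between the $A_i$'s'' does not change any part size, and ``pulling a vertex from some $A_i$ into $T$ along a red edge'' re-breaks balance in the other direction and changes the leaf structure in an uncontrolled way. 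So the asserted dichotomy ``either $m$ increases or the leaf count drops'' is precisely the unproved heart of the argument. The rotation move is also not sound as stated: re-hanging a leaf $v$ at another tree vertex deletes the edge at $v$'s old neighbour, which may thereby become a new leaf, so the leaf count can stay the same or even increase; no choice of $v$ and $u$ inside $V(T)$ alone is guaranteed to decrease it.

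Compare this with what the paper actually does. It too runs an extremal argument over partitions into a red tree plus leftover sets, but it deliberately does \emph{not} impose exact balance during the process. It fixes a tree with at most $k$ leaves together with sets $A_0,\dots,A_k,B_1,\dots,B_k$ extremal for a long hierarchy of conditions expressed through $c(\cdot)$, the order of the largest red component of the leftover, and the number of components attaining it; the local modifications (absorbing a vertex of a largest red component at a leaf, joining two leaves by a red path \emph{through the leftover} and then deleting an edge at a degree-$\geq 3$ vertex on the created cycle, deleting a leaf) always stay inside this looser family, so no divisibility obstruction arises. Exact balance is manufactured only at the end by a separate inductive balancing lemma (Lemma~\ref{Balancedlemma}), which converts approximate inequalities such as $|A_0|\geq|A_i|$, $|A_i|+|B_i|\geq|A_0|$ and $|B_i|\leq 2\min_t|B_t|$ into a balanced complete multipartite graph at the cost of $k$ red paths that start next to the tree and can be re-attached without exceeding the leaf bound. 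Your proposal contains no analogue of this balancing step and never engages with the red components inside the leftover set, which is the quantity driving the whole argument (cf.\ Theorem~\ref{WeakTreeMultipartite}); without both ingredients the extremal configuration you select cannot be shown to have few leaves.
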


It is not immediately clear that Theorem~\ref{TreeMultipartite} implies Theorem~\ref{PathMultipartite}.  Notice that every tree with $k$ leaves can be covered by $k-1$ vertex-disjoint paths.  Therefore Theorem~\ref{TreeMultipartite} has the following corollary.

\begin{corollary}\label{PathMultipartiteConnected}
 Let $k\geq 1$.  Suppose that the edges of $K_n$ are coloured with the colours red and blue, such that the red spanning subgraph is connected.  Then $K_n$ can be covered by $k$ disjoint red paths and a disjoint blue balanced complete $(k+2)$-partite graph.
\end{corollary}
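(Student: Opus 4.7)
The plan is to derive Corollary~\ref{PathMultipartiteConnected} directly from Theorem~\ref{TreeMultipartite} by a short combinatorial lemma on trees. First I would apply Theorem~\ref{TreeMultipartite} to the given red-connected $2$-edge-colouring of $K_n$ with the same parameter $k$, producing a red tree $T$ with at most $k$ leaves together with a disjoint blue balanced complete $(k+2)$-partite graph covering the remaining vertices. The problem then reduces to showing that $T$ can be split into $k$ vertex-disjoint red paths, where we allow degenerate (single-vertex or empty) paths to fill any extra slots.

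The main subclaim is the following: every tree with $\ell$ leaves decomposes into at most $\max(\ell-1,1)$ vertex-disjoint paths. I would prove this by induction on $\ell$. For $\ell\le 2$ the tree is either a single vertex or a path, so a single path suffices. For $\ell\ge 3$, pick an arbitrary leaf $v$ and follow the unique path of $T$ starting at $v$ until it first meets a vertex $u$ of degree at least $3$. Let $P$ consist of $v$ together with all internal vertices on this path (so $P$ ends at the neighbour of $u$ on this path), and set $T':=T-V(P)$. Since $P$ is a pendant path ending next to $u$, removing $V(P)$ decreases $\deg_T(u)$ by exactly one, leaving it at least $2$; no other vertex degree in $T$ changes. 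Hence $T'$ is a tree whose leaf set is exactly the leaf set of $T$ minus $\{v\}$, so $T'$ has $\ell-1$ leaves. By the inductive hypothesis $T'$ decomposes into at most $\ell-2$ vertex-disjoint paths, and adjoining $P$ produces the required decomposition of $T$ into at most $\ell-1$ paths.

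Putting the two steps together, the tree $T$ delivered by Theorem~\ref{TreeMultipartite} decomposes into at most $k-1$ vertex-disjoint red paths; padding with at most one (or two) trivial paths gives exactly $k$ disjoint red paths, which together with the blue balanced complete $(k+2)$-partite graph cover all vertices of $K_n$. There is no serious obstacle here: the entire content of the argument is contained in Theorem~\ref{TreeMultipartite}, and the ``tree with few leaves splits into few paths'' lemma is a one-line induction, so the deduction of Corollary~\ref{PathMultipartiteConnected} is essentially bookkeeping.
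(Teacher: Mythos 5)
Your proposal is correct and is essentially the paper's own deduction: apply Theorem~\ref{TreeMultipartite} and use the fact that a tree with at most $k$ leaves can be covered by $k-1$ vertex-disjoint paths (padding with trivial paths as needed). The only difference is that you spell out the short induction proving this tree-decomposition fact, which the paper states without proof.
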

Corollary~\ref{PathMultipartiteConnected} shows that when the colouring of $K_n$ is connected in red, then the conclusion of Theorem~\ref{PathMultipartite} can actually be strengthened---we can use one less red path in the covering of $K_n$.  

Theorem~\ref{PathMultipartite} is easy to deduce from Corollary~\ref{PathMultipartiteConnected}.  
\begin{proof}[Proof of Theorem~\ref{PathMultipartite}]
For $k=1$, Theorem~\ref{PathMultipartite} is just Lemma~\ref{PathBipartite}.  This lemma was originally proven in \cite{PokrovskiyCycles}, and this proof is also reproduced in Section~\ref{SectionRamseyNotation}.  We shall therefore assume that $k\geq 2$.

Suppose that we have an arbitary 2-edge-colouring of $K_n$. We add an extra vertex $v$ to the graph and add red edges between $v$ and all other vertices.  The resulting colouring of $K_{n+1}$ is connected in red.  Therefore we can apply Corollary~\ref{PathMultipartiteConnected} to $K_n+v$ in order to cover it by $k-1$ disjoint red paths and a disjoint blue balanced $(k+1)$-partite graph $H$.  Since all the edges containing $v$ are red, the vertex $v$ cannot be in $H$.  Therefore, $v$ must be contained in one of the red paths.  Therefore, removing $v$ gives a partition of $K_n$ into $k$ disjoint red paths a blue balanced complete $(k+1)$-partite graph as required.
\end{proof}

A well known remark of Erd\H{o}s and Rado says that any 2-edge-coloured complete graph is connected in one of the colours.  Therefore Theorem~\ref{TreeMultipartite} implies that every 2-edge-coloured complete graph can be covered by a monochromatic path and a monochromatic balanced complete tripartite graph (where we have no control over which colour each graph has).

The $t$h power of a path of order $n$ is the graph constructed with vertex set $1, \dots, n$ and~$ij$ and edge whenever $1\leq |i-j|\leq t$.  It is easy to see that $\kPartite{m}{t}$ contains a copy of  $P_{tm}^{t-1}$.  Therefore, Theorem~\ref{PathMultipartite} and Corollary~\ref{PathMultipartiteConnected} imply the following.
\begin{corollary}\label{PathPathPower}
Let $k\geq 1$.  Suppose that $K_n$ is colored with two colours.
\begin{itemize}
 \item $K_n$ can be covered with $k$ disjoint red paths and a disjoint blue $k$th power of a path.
 \item If $K_n$ is connected in red, then $K_n$ can be covered with $k$ disjoint red paths and a disjoint blue $(k+1)$th power of a path.
\end{itemize}
\end{corollary}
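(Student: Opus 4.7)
The plan is to derive both parts of the corollary directly from the two partition results already at hand, Theorem~\ref{PathMultipartite} and Corollary~\ref{PathMultipartiteConnected}, by means of the elementary embedding $P^{t-1}_{tm}\subseteq \kPartite{m}{t}$ that is flagged immediately before the statement. The real content of the corollary is just that once one has produced a blue balanced complete multipartite graph with enough parts, the multipartite graph already contains a spanning power of a path of the advertised exponent.

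First I would make the embedding precise. Write the parts of $\kPartite{m}{t}$ as $A_1,\dots,A_t$ with $A_i=\{a_{i,1},\dots,a_{i,m}\}$ and list the vertices in the ``round-robin'' order $a_{1,1},a_{2,1},\dots,a_{t,1},a_{1,2},a_{2,2},\dots,a_{t,2},\dots,a_{1,m},\dots,a_{t,m}$. In this ordering, two vertices at positions $p,p'$ with $1\le |p-p'|\le t-1$ have first coordinates differing by a nonzero amount modulo $t$, so they lie in distinct parts, and therefore are joined by an edge of $\kPartite{m}{t}$. Hence the listed ordering witnesses a spanning copy of $P^{t-1}_{tm}$ in $\kPartite{m}{t}$.

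With that lemma in hand, for the first bullet I would apply Theorem~\ref{PathMultipartite} with the given $k$ to obtain a partition of $K_n$ into $k$ disjoint red paths and a disjoint blue $\kPartite{m}{k+1}$ for some $m\ge 0$; the embedding with $t=k+1$ then replaces the blue $(k+1)$-partite graph with a spanning blue $k$th power of a path, yielding the desired cover. For the second bullet, assuming the colouring is red-connected, I would apply Corollary~\ref{PathMultipartiteConnected} to obtain a partition into $k$ disjoint red paths and a disjoint blue $\kPartite{m}{k+2}$, and then use the embedding with $t=k+2$ to convert the blue part into a spanning $(k+1)$th power of a path.

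There is essentially no obstacle here: the hard content is already encapsulated in the two partition theorems, and the only new step is the round-robin ordering, which is immediate. The one thing to watch is the degenerate case $m=0$, where the blue power-of-path is empty and the $k$ red paths are required to cover $K_n$ on their own; this is permitted by the statement, so no special argument is needed.
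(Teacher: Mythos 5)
Your proposal is correct and is essentially the paper's own derivation: the paper likewise deduces both bullets by noting that $\kPartite{m}{t}$ contains a spanning copy of $P_{tm}^{t-1}$ and applying Theorem~\ref{PathMultipartite} for the first part and Corollary~\ref{PathMultipartiteConnected} for the second. Your round-robin ordering just makes explicit the containment the paper calls ``easy to see,'' so there is nothing further to add.
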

The first part of this corollary may be seen as a generalization of Theorem~\ref{GerencserGyarfas}.
We are also able to use Corollary~\ref{PathPathPower} and Theorem~\ref{PathRamsey} to determine the  Ramsey numbers of a path on $n$ vertices versus a power of a path on $n$ vertices.
\begin{theorem}\label{PathPowerRamsey}
For all $k$ and $n\geq k+1$, we have
$$R(P_n, P_n^k)= (n-1)k + \left\lfloor \frac{n}{k+1} \right\rfloor.$$
\end{theorem}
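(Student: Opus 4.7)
The plan is to prove matching upper and lower bounds. Set $q = \lfloor n/(k+1)\rfloor$ and $N = (n-1)k + q$.

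\textbf{Lower bound.} I will exhibit a 2-edge-colouring of $K_{N-1}$ with no red $P_n$ and no blue $P_n^k$ by partitioning the vertex set into $k$ blocks of size $n-1$ and one block of size $q-1$, colouring edges inside a common block red and all cross-edges blue. Red components are cliques of size at most $n-1$, so no red $P_n$ exists. For the absence of a blue $P_n^k$: any $k+1$ consecutive vertices of such a copy form a $K_{k+1}$ and so must lie in distinct blocks, which forces the embedding to be periodic modulo $k+1$ and splits the $n$ vertices into $k+1$ residue classes each of size $\lfloor n/(k+1)\rfloor$ or $\lceil n/(k+1)\rceil$, in particular all of size at least $q$. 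Since the small block has only $q-1$ vertices, no assignment of residue classes to blocks accommodates all residues.

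\textbf{Upper bound.} Consider a 2-edge-colouring of $K_N$ with no red $P_n$; I want to find a blue $P_n^k$. Apply Theorem~\ref{PathMultipartite} to partition $V(K_N)$ into $k$ disjoint red paths $R_1,\dots,R_k$ and a blue balanced complete $(k+1)$-partite graph $\kPartite{m}{k+1}$ on parts $A_0,\dots,A_k$. The key observation is that $\kPartite{m}{k+1}$ contains $P_j^k$ for every $j \leq (k+1)m$, via the cyclic embedding placing vertex $i$ of $P_j^k$ into part $A_{(i-1)\bmod(k+1)}$, so that any $k+1$ consecutive vertices span all parts and form a blue clique. Hence it suffices to prove $(k+1)m \geq n$. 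Since $|R_i|\leq n-1$, counting $N = \sum_i |R_i| + (k+1)m$ yields only the weaker inequality $(k+1)m \geq q$, which must be strengthened.

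To close this gap I will take the cover maximising $m$ and exploit maximality. If an endpoint $e$ of some $R_i$ has a red edge to a vertex $v \in V(\kPartite{m}{k+1})$, then prepending $v$ lengthens $R_i$ by one; iterating shows that when $|R_i|=n-1$, every edge from an endpoint of $R_i$ to $V(\kPartite{m}{k+1})$ is blue. Similarly, whenever $|R_i|+|R_j|\geq n$ the four edges between the endpoints of $R_i$ and $R_j$ are all blue, since otherwise concatenating the two paths through a red endpoint-edge yields a red $P_n$. These blue edges let me absorb the $2k$ endpoints (or a suitable subset) into the parts $A_0,\dots,A_k$, producing a balanced multipartite blue graph $\kPartite{m'}{k+1}$ with $(k+1)m' \geq n$, which then contains the required blue $P_n^k$. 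The main obstacle will be this absorption step: the endpoints must be distributed evenly across the $k+1$ parts to preserve the balanced structure and to respect the unknown colours of the within-path endpoint-pairs, and it is precisely the resulting divisibility bookkeeping that produces the floor function $\lfloor n/(k+1)\rfloor$ in the final formula.
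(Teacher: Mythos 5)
Your lower bound is correct and is exactly the paper's construction ($k$ red copies of $K_{n-1}$ plus one red $K_{\lfloor n/(k+1)\rfloor-1}$, rest blue), with the periodicity argument supplying the details the paper leaves to the reader. The problem is the upper bound, where the proposal has a genuine quantitative gap, not just "divisibility bookkeeping". After applying Theorem~\ref{PathMultipartite} to $K_N$ with $N=(n-1)k+\lfloor n/(k+1)\rfloor$, the blue balanced $(k+1)$-partite graph may have as few as $\lfloor n/(k+1)\rfloor$ vertices (this happens precisely when every red path has order $n-1$, which the counting permits). Your absorption step moves only endpoints of the $k$ red paths into the blue structure, so it can add at most $2k$ vertices; you would need roughly $n-\lfloor n/(k+1)\rfloor\approx nk/(k+1)$ additional vertices, so no distribution of $2k$ endpoints, however balanced, can produce $\kPartite{m'}{k+1}$ with $(k+1)m'\geq n$ once $n$ is large compared with $k$ (already $k=1$, $n=10$ fails: $5+2<10$). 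Indeed, the endpoint-absorption idea is exactly what yields the paper's Lemma~\ref{WeakRamsey}(a), namely $R(P_n,P_m^t)\leq (n-2)t+m$, which for $m=n$, $t=k$ gives $(n-1)k+(n-k)$ --- strictly weaker than the claimed $(n-1)k+\lfloor n/(k+1)\rfloor$.

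What is missing is a way to extract blue structure from \emph{inside} the red components, not just from path endpoints. The paper does this by a case analysis on the largest red component $C$ (Claim~\ref{RamseyClaim}): if $C$ is large, the red-connected strengthening (Corollary~\ref{PathMultipartiteConnected}, via Lemma~\ref{WeakRamsey}(b)) is applied inside $C$ to get a blue $(k-i)$th power of a path there, a blue $(i-1)$st power of a path is found in the complement, and the two are interleaved across the all-blue cut using Lemma~\ref{Combinepaths}; if instead the red components are all small, one gets either a moderate blue-separated set $B$ (Case 2, again finished by joining) or a partition into $k$ pairwise blue-joined sets, in which case blue paths found inside the $B_i$ via Theorem~\ref{PathRamsey} are woven together by an explicit construction. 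Your sketch contains none of this machinery, and the step "these blue edges let me absorb the $2k$ endpoints \dots producing $\kPartite{m'}{k+1}$ with $(k+1)m'\geq n$" is false as stated; the approach as outlined cannot reach the stated bound.
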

Theorem~\ref{PathPowerRamsey} solves a conjecture of Allen, Brightwell, and Skokan who asked for the value of $R(P_n, P_n^k)$ in \cite{Skokan}.

The structure of this paper is as follows.  In Section \ref{SectionRamseyNotation} we define some notation and prove certain weakenings of Theorem~\ref{TreeMultipartite}.  
These weakenings serve to illustrate the main ideas used in the proof of Theorem~\ref{TreeMultipartite} and hopefully aid the reader in understanding that theorem.
In addition the results we prove in Section~\ref{SectionRamseyNotation} will be strong enough to imply Corollary~\ref{PathPathPower}.  This means that it is possible to prove Theorem~\ref{PathPowerRamsey} without using the full strength of Theorem~\ref{TreeMultipartite}.
In Section~\ref{SectionRamseyPartitioning} we prove Theorem~\ref{TreeMultipartite}.  
In Section~\ref{SectionRamseyNumbers} we prove Theorem~\ref{PathPowerRamsey} and also determine  $R(P_n, \kPartite{m}{t})$ whenever $m\equiv 1 \pmod{n-1}$.
In Section~\ref{SectionRamseyRemarks} we discuss some further problems which may be approachable using the techniques presented in this paper.
\
%\SetWatermarkLightness{0.85}
\section{Preliminaries}\label{SectionRamseyNotation}
%For two sets of vertices $S$ and $T$ in a graph $G$, let $\Bipart{G}{S}{T}$ be the subgraph of $G$ with vertex set $S\cup T$ with $st$ an edge of $\Bipart{G}{S}{T}$ whenever $s\in S$ and $t \in T$.  

For a nonempty path $P$, it will be convenient to distinguish between the two endpoints of $P$ saying that one endpoint is the ``start'' of $P$ and the other is the ``end'' of $P$.  Thus we will often say things like ``Let $P$ be a path from $u$ to $v$''.  Let $P$ be a path from $a$ to $b$ in $G$ and $Q$ a path from $c$ to $d$ in $G$.  If $P$ and $Q$ are disjoint and $bc$ is an edge in $G$, then we define $P+Q$ to be the unique path from $a$ to $d$ formed by joining $P$ and $Q$ with the edge $bc$.    If $P$ is a path and $Q$ is a subpath of $P$ sharing an endpoint with $P$, then $P-Q$ will denote the subpath of $P$ with vertex set $V(P)\setminus V(Q)$.

%We will often identify a graph $G$ with its vertex set $V(G)$.  Whenever we say that two subgraphs of a graph are ``disjoint" we will always mean vertex-disjoint.  If $H$ and $K$ are subgraphs of $G$ then $H\setminus K$ will mean $V(H)\setminus V(K)$ and $H\cup K$ will mean $V(H)\cup V(K)$.  
Whenever a graph $G$ is covered by vertex-disjoint subgraphs $H_1, H_2, \dots, H_k$, we say that $H_1, H_2, \dots, H_k$ \emph{partition} $G$.

All colourings in this section will be edge-colourings.
Whenever a graph is coloured with two colours, the colours will be called ``{red}" and ``{blue}". 
If a graph $G$ is coloured with some number of colours we define the \emph{{red} colour class} of $G$ to be the subgraph of $G$ with vertex set $V(G)$ and edge set consisting of all the {red} edges of $G$.  
We say that $G$ is \emph{connected in} {red}, if the {red} colour class is a connected graph.  Similar definitions are made for the colour {blue} as well.

For all other notation, we refer to \cite{Diestel}

In order to illustrate the main ideas of the proof of Theorem~\ref{TreeMultipartite}, we give a proof of Lemma~\ref{PathBipartite} here.

\begin{proof}[Proof of Lemma~\ref{PathBipartite}]
Notice that a graph with no edges is a complete bipartite graph (with one of the parts empty).  Therefore, any 2-edge-coloured $K_n$ certainly has a partition into a red path and a blue complete bipartite graph (by assigning all of $K_n$ to be one of the parts of the complete bipartite graph).
Partition $K_n$ into a red path $P$ and a complete bipartite graph $B(X,Y)$ with parts $X$ and $Y$ such that the following hold.
\begin{enumerate}[\normalfont (i)]
 \item $\max(|X|, |Y|)$ is as small as possible.
\item $|P|$ is as small as possible (whilst keeping (i) true).
\end{enumerate}
We are done if $|X|=|Y|$ holds.  Therefore, without loss of generality, suppose that we have $|X|<|Y|$.  

Suppose that $P=\emptyset$.  Then let $y$ be any vertex in $Y$, $P'=\{y\}$, $Y'=Y-y$, and $X'=X$.  This new partition of $K_n$ satisfies $\max(|Y'|,|X'|)<|Y|=\max(|X|,|Y|)$, contradicting minimality of the original partition in (i).

Now, suppose that $P$ is nonempty. 
 Let $p$ be an end vertex of~$P$.

If there is a red edge $py$ for $y\in Y$, then note that letting $P'=P+y$ and $Y'=Y-y$ gives a partition of $K_n$ into a red path and a complete bipartite graph~$B(X,Y')$ with parts $X$ and $Y'$.  However we have $\max(|Y'|,|X|)<|Y|=\max(|X|,|Y|)$, contradicting minimality of the original partition in (i).

If all the edges between $p$ and $Y$ are blue, then note that letting $P'=P-p$ and $X'=X+p$ gives a partition of $K_n$ into a red path and a complete bipartite graph $B(X',Y)$ with parts $X'$ and $Y$.  We have that $\max(|X'|,|Y|)=|Y|=\max(|X|,|Y|)$ and $|P'|<|P|$, contradicting minimality of the original partition in (ii).
\end{proof}

The proof of Theorem~\ref{TreeMultipartite} is similar to the above proof.
The above proof of Lemma~\ref{PathBipartite}  could be summarised as ``first we find a partition of our graph which is in some way extremal and then we show that it possesses the properties that we want''.  
The proof of Theorems~\ref{TreeMultipartite} has the same basic structure.

For a set $S\subseteq K_n$, let $c(S)$ be the order of the largest red component of $K_n[S]$.  
We now prove the following weakening of Theorem~\ref{TreeMultipartite}.
\begin{theorem}\label{WeakTreeMultipartite} 
 Let $k\geq 2$.  Suppose that the edges of $K_n$ are coloured with the colours red and blue, such that the red spanning subgraph is connected.  Then $K_n$ can be covered by a red tree with at most $k$ leaves and a disjoint set $S$ satisfying  $c(S)\leq |S|/(k+1)$.
\end{theorem}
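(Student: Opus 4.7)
The plan is to adapt the extremal-partition template used in the proof of Lemma~\ref{PathBipartite}. Define $\Phi(T,S) := (k+1)c(S) + |T|$; since $|T|+|S|=n$, the target conclusion $c(S) \leq |S|/(k+1)$ is equivalent to $\Phi(T,S) \leq n$. Among all partitions $(T,S)$ of $V(K_n)$ in which $T$ induces a red tree with at most $k$ leaves, pick one minimising $\Phi$. Assume for contradiction that $\Phi(T,S) > n$, and let $C$ be a largest red component of $K_n[S]$. Because the red spanning subgraph is connected and every red edge leaving $C$ must terminate in $T$, the set $T$ is nonempty and there is some red edge $vt$ with $v \in C$ and $t \in T$.

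The first situation to handle is when such a $v$ can be chosen with $t$ a leaf of $T$, or when $T$ has fewer than $k$ leaves. Then $T' := T + v$ (attached through $vt$) is a red tree with at most $k$ leaves, and removing $v$ from $C$ shrinks or splits it, so $c(S - v) \leq c(S)-1$ provided $C$ is the unique largest component. In that case $\Phi$ drops by at least $k$, contradicting minimality. If several components $C_1,\ldots,C_m$ are tied for the maximum size, then either $m \geq k+1$ (which immediately gives $|S| \geq m\,c(S) \geq (k+1)c(S)$, contradicting the hypothesis) or $m \leq k$ and a parallel version of the move, plucking one vertex from each $C_i$, produces the same gain in $c(S)$ at the cost of $m \leq k$ in $|T|$, again decreasing $\Phi$.

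The second situation is pruning. If some leaf $\ell$ of $T$ has all of its red neighbours in $S$ confined to components of total size at most $c(S)-1$, then $T' := T - \ell$ is a red tree with at most $k$ leaves, $c(S + \ell) = c(S)$, and $|T'| < |T|$, so $\Phi$ strictly decreases, another contradiction.

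In the residual case, $T$ has exactly $k$ leaves, no red $C$-$T$ edge hits a leaf of $T$, and every leaf $\ell$ of $T$ has red neighbours in $S \setminus C$ of total size at least $c(S)$ (otherwise it could be pruned). Let $D_1,D_2,\ldots$ be the red components of $S \setminus C$ and $n_j$ the number of leaves of $T$ red-adjacent to $D_j$. Double counting gives $\sum_j n_j |D_j| = \sum_\ell N(\ell) \geq k\, c(S)$; if $n_j \leq 1$ for every $j$, then $\sum_j n_j |D_j| \leq |S \setminus C|$, so $|S| \geq (k+1)c(S)$ — the final contradiction. The main obstacle is the remaining case where some $D$ is red-adjacent to two or more leaves $\ell_1, \ell_2$ of $T$. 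I would resolve it by a re-wiring: choose red edges $\ell_1 d_1, \ell_2 d_2$ with $d_1, d_2 \in D$, a red $d_1$-$d_2$ path $P_D$ inside $D$, and an edge $e$ on the unique $\ell_1$-$\ell_2$ path in $T$; then replace $e$ by $\ell_1 d_1 \cup P_D \cup d_2 \ell_2$. The result is a red tree with at most $k$ leaves in which $\ell_1$ and $\ell_2$ have become internal and the only candidate new leaves are the endpoints of $e$; crucially these new leaves need not avoid $C$, so applying the first situation to the rewired tree reduces $\Phi$ once the growth of $|T|$ by $|V(P_D)| \geq 2$ is weighed against the gain $k+1$ from dropping $c(S)$. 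Making this rewiring fit cleanly inside the extremal framework — and in particular choosing $e$ so that the net change in $\Phi$ is strictly negative — is the technical heart of the argument.
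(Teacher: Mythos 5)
There is a genuine gap, and you have correctly located it yourself: the case in which a red component $D$ of $S$ sends red edges to two or more leaves of $T$ is not resolved, and your proposed rewiring does not work as sketched. When you replace an edge $e$ of $T$ by $\ell_1 d_1\cup P_D\cup d_2\ell_2$, the tree grows by $|V(P_D)|$, which can be as large as $c(S)$ and is in no way bounded by $k$; moreover the new leaves (the endpoints of $e$) are arbitrary internal vertices of the old tree and need not have any red neighbour in a largest component, so there is no guaranteed subsequent drop of $c(S)$ to pay for the growth of $|T|$. Hence the net change of $\Phi=(k+1)c(S)+|T|$ can be strictly positive, and no choice of $e$ fixes this. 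A second, smaller flaw is the ``parallel move'' for tied maximum components: plucking one vertex from each of $m\leq k$ maximum components requires $m$ attachments to $T$, and unless each attachment point is a distinct leaf this can push the leaf count above $k$, so that move is not available within your constraint class.

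The underlying problem is the choice of a single numerical potential. The paper's proof minimises lexicographically: first $c(S)$, then the \emph{number} of red components of $S$ of maximum order $c(S)$, then $|T|$. With that ordering, the troublesome case (two leaves $v_i,v_j$ joined by a red path $P$ through the small components of $S$) is handled by a move that your potential cannot register as progress: add $P$ to $T$ to create a unicyclic red graph with $\ell-2$ leaves, use red-connectedness to attach a vertex $u$ of a \emph{maximum} component at an arbitrary vertex (not necessarily a leaf -- the cycle absorbs the extra leaf, since one can delete a cycle edge incident to a vertex of degree at least $3$ and recover a tree with at most $k$ leaves), and observe that although $|T|$ may grow by an unbounded amount, the number of maximum-order components of $S$ strictly decreases. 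Under $\Phi$ this move increases the potential, which is exactly why your argument stalls; under the lexicographic ordering it is a strict improvement. The remaining case, where the leaf-neighbourhoods in the small components are pairwise disjoint, is finished in the paper by a pigeonhole/pruning argument essentially equivalent to your counting of $\sum_j n_j|D_j|$, so that part of your proposal is sound. To repair your proof you would need to replace $\Phi$ by (or supplement it with) a tiebreaker counting maximum-order components, at which point you would essentially be reproducing the paper's argument.
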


Notice that Theorem~\ref{WeakTreeMultipartite} is indeed a weakening of Theorem~\ref{TreeMultipartite}.
To see this, simply note that if we have a set $S\subseteq V(K_n)$ such that the induced colouring of $K_n$ on $S$ is a blue balanced $(k+2)$ partite graph, then $S$ satisfies $c(S)\leq |S|/(k+2)$.

\begin{proof}[Proof of Theorem~\ref{WeakTreeMultipartite}]
We partition $K_n$ into a red tree $T$ and a set $S$ with the following properties.
\begin{enumerate}[\normalfont (i)]
\item $T$ has at most $k$ leaves.
 \item $c(S)$ is as small as possible (whilst keeping (i) true).
\item The number of red components in $S$ of order $c(S)$ is as small as possible (whilst keeping (i) and (ii) true).
\item $|T|$ is as small as possible (whilst keeping (i) -- (iii) true).
\end{enumerate}
We claim that $c(S)\leq |S|/(k+1)$ holds.  Suppose otherwise that we have $c(S)>|S|/(k+1)$.  Notice that since c(S) is an integer, this implies $c(S)\geq \lfloor|S|/(k+1)\rfloor+1$.
We will construct a new partition of $K_n$ into a tree $T'$ and a set $S'$ which will contradict minimality of the original partition in either (ii), (iii), or (iv).

Let $S^+$ be subset of $S$ formed by taking the union of the red components of order $c(S)$ in $S$.  Let $S^-$ be $S\setminus S^+$.  Since we are assuming $c(S)>|S|/(k+1)$, we must have $|S^-|<k|S|/(k+1)$.

Let $v_1, \dots, v_{\ell}$ be the leaves of $T$.  By assumption (i), we have $\ell\leq k$.

Suppose that $v_i$ has a red neighbour $u\in S^+$.  Then we can let $T'=T+u$ be the red tree formed from $T$ by adding the edge $v_iu$, and $S'=V(K_n)\setminus V(T)$.  Notice that $T'$ still has at most $k$ leaves.  Since $S'$ is a subset of $S$, we must have $c(S')=c(S)$ (by minimality of $c(S)$ in (ii)).  But since $u$ was in a red component of order $c(S)$, $S'$ must have one less component of order $c(S)$ than $S$ had.  This contradicts minimality of the original partition in (iii).

For the remainder of the proof, we can suppose that the vertices $v_1, \dots, v_{\ell}$ do not have any red neighbours in $S^+$.  For a leaf $v_i$, let $\overline{N}(v_i)$ be red connected component containing $v_i$ in the induced graph on $S^-+v_i$

Suppose that $\overline{N}(v_i)\cap \overline{N}(v_i)\neq \emptyset$ for some $i\neq j$.  Then there must be a red path $P$ between $v_i$ and $v_j$ contained in $S^-+v_i+v_j$.  Let $T_1$ be the graph formed by adding the path $P$ to the tree $T$.  Notice that $T_1$ is a red graph with $\ell-2$ leaves and exactly one cycle.  By connectedness of the red colour class of $K_n$ there is a red edge between some $v \in T_1$ and $u\in S^+$.  Let $T_2$ be the graph formed by adding the vertex $u$ and the edge $uv$ to $T_1$.  Notice that $T_2$ is a red graph with between $1$ and $\ell-1$ leaves and exactly one cycle.  Therefore $T_2$ contains an edge  $xy$ which is contained on the cycle and the vertex $x$ has degree at least $3$.  Let $T_3$ be $T_2$ minus the edge $xy$ and $S'=V(K_n)\setminus V(T_2)$.  Now $T_3$ is a red tree with at most $\ell\leq k$ leaves.  As before $S'\subset S$ and (ii) implies that we must have $c(S')=c(S)$.  As before this contradicts (iii) since the vertex $u$ which we removed from $S^+$ was contained in a red component of order $c(S)$.

Suppose that $\overline{N}(v_i)\cap \overline{N}(v_i)= \emptyset$ for all $i\neq j$.  Recall that we have $\overline{N}(v_i)-vi\subseteq S^-$ for all $i$ and also $|S^-|<k|S|/(k+1)$.  By the Pigeonhole Principle, for some $i$ we have $|\overline{N}(v_i)-v_i|< |S|/(k+1)$, which combined with the integrality of $|\overline{N}(v_i)|$ implies $|\overline{N}(v_i)| \leq \lfloor|S|/(k+1)\rfloor+1$.  Let $T'=T-v_i$ and $S'=V(K_n)\setminus T'$.  The new $T'$ satisfies (i).  The only red component of $S'$ which was not a red component of $S$ is $\overline{N}(v_i)$.  However we have $\overline{N}(v_i)\leq \lfloor|S|/(k+1)\rfloor+1\leq c(S)=c(S')$ and so $S'$ satisfies (ii) and (iii).  However we have $|T'|=|T|-1$ contradicting minimality of $|T|$ in (iv).

This completes the proof of the theorem.
\end{proof}

Most of the steps of the above proof reoccur in the proof of Theorem~\ref{TreeMultipartite}.
We conclude this section by observing that Theorem~\ref{WeakTreeMultipartite} implies Corollary~\ref{PathPathPower}.  

First notice that it is sufficient to prove the following proposition.
\begin{proposition}\label{WeakSeymour}
Let $K_n$ be a 2-edge-coloured complete graph.  Suppose that $K_n$ contains a set $S$ which satisfies  $c(S)\leq |S|/(k+2)$.  Then $S$ contains a spanning blue $(k+1)$st power of a path.
\end{proposition}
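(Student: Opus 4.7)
The plan is to reduce the problem to finding a suitable ordering of the vertices of $S$ and to construct that ordering by a greedy algorithm. Let $C_1, \dots, C_r$ be the red components of $K_n[S]$, with sizes $c_1 \ge c_2 \ge \cdots \ge c_r$. Since $c_1 \le |S|/(k+2)$, there are at least $k+2$ components. Because the blue graph on $S$ contains every edge joining two distinct red components, a blue $(k+1)$st power of a path spanning $S$ is the same thing as a linear ordering $v_1, \dots, v_{|S|}$ of $S$ in which any two vertices at distance at most $k+1$ in the ordering lie in different red components. So it suffices to construct such an ordering.

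I would construct the ordering greedily, simulating a task scheduler with cooldown $k+1$. Maintain residual counts $\rho(C) = |C|$, and at each step $t$ place a vertex from the component with the largest $\rho$ that was not used in any of the previous $k+1$ steps. The cooldown rule guarantees that no two vertices of the same component end up within distance $k+1$, so the only thing to verify is that the greedy never runs out of eligible components. This is the standard task-scheduler feasibility statement: a schedule of length $|S|$ exists whenever $(c_1 - 1)(k+2) + m \le |S|$, where $m$ is the number of components of size exactly $c_1$. From $m c_1 \le \sum c_i = |S|$ one has $m \le |S|/c_1$, so the hypothesis reduces to checking that $|S|/c_1 + (c_1 - 1)(k+2) \le |S|$ on the interval $c_1 \in [1, |S|/(k+2)]$. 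The left-hand side is a convex function of $c_1$ equal to $|S|$ at both endpoints, so (being below its chord) it is bounded above by $|S|$ throughout the interval.

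The main obstacle is the scheduler feasibility proof itself. A self-contained version proceeds by contradiction: suppose the greedy first fails at step $t$. Then all $|S| - t + 1$ remaining vertices lie in the at most $k+1$ components that were placed in the preceding $k+1$ steps, and pigeonhole produces one cooled component $C^*$ whose residual count is at least $(|S| - t + 1)/(k+1)$. The cooldown rule forces $C^*$ to have been placed at most $\lceil (t-1)/(k+2)\rceil$ times before step $t$, so $|C^*|$ is at most the sum of these two quantities. Combining this upper bound on $|C^*|$ with $|C^*| \le c_1 \le |S|/(k+2)$ and $t \le |S|$ yields the required contradiction after a short calculation. The trickiest case is when several components are tied with $C_1$ at the maximum size, where one has to account carefully for how often $C^*$ gets passed over in favour of an equally large competitor; this is exactly the content of the inequality $(c_1-1)(k+2) + m \le |S|$ derived above.
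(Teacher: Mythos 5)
Your overall route is sound and is genuinely different from the paper's treatment: the paper does not prove Proposition~\ref{WeakSeymour} in full, but instead observes that the blue graph on $S$ has minimum degree at least $\frac{k+1}{k+2}|S|$, so the proposition follows from the Koml\'os--S\'ark\"ozy--Szemer\'edi theorem on Seymour's conjecture (for large orders), and then sketches, without details, an elementary induction on $|S|$. Your reduction to a rearrangement statement --- order $S$ so that vertices within distance $k+1$ lie in different red components --- is correct (it is a sufficient, not equivalent, condition, which is all you need), the feasibility condition $(c_1-1)(k+2)+m\le|S|$ is the right one, and your verification of it via $m\le|S|/c_1$ and convexity of $|S|/c+(c-1)(k+2)$ on $[1,|S|/(k+2)]$ is correct and clean. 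If you simply invoked the rearrangement/scheduling lemma as known, the argument would be complete and pleasantly elementary.

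The gap is in your self-contained proof of that lemma. The contradiction you sketch never actually uses the greedy rule: the facts you list at the first failure step $t$ --- the remaining $|S|-t+1$ vertices lie in at most $k+1$ cooled components, some cooled $C^*$ has residual at least $(|S|-t+1)/(k+1)$, and $C^*$ was placed at most $\lceil(t-1)/(k+2)\rceil$ times --- hold for \emph{any} cooldown-respecting order, including ones that do get stuck. For example, with $k=0$ and component sizes $2,1,1,1$ (so $c(S)=2\le 5/2$), placing the three singletons first and then one vertex of the doubleton gets stuck at $t=5$, yet all of your listed inequalities, together with $|C^*|\le c_1\le|S|/(k+2)$ and $t\le|S|$, are satisfied; hence no ``short calculation'' from those facts alone can yield a contradiction. (Also, the step ``$|C^*|$ is at most the sum of these two quantities'' combines a \emph{lower} bound on the residual with an upper bound on placements, and in any case the contradiction requires a lower bound on $|C^*|$ exceeding $|S|/(k+2)$, which must come from the largest-residual rule applied at the earlier steps --- precisely the part you defer to the ``trickiest case.'') To repair this, either prove the feasibility statement by the standard explicit construction --- form $c_1$ blocks, put one vertex of each of the $m$ maximum components at the head of every block, distribute the remaining vertices round-robin by component over the first $c_1-1$ blocks, and concatenate; the condition $(c_1-1)(k+2)+m\le|S|$ is exactly what makes each of the first $c_1-1$ blocks have length at least $k+2$ --- or carry out an honest analysis of the greedy in which the choice rule is used at every step before the failure, not only at ties.
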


Indeed combining Proposition~\ref{WeakSeymour} with Theorem~\ref{WeakTreeMultipartite} we obtain that every 2-edge-coloured complete graph which is connected in red can be covered by a red tree $T$ with at most $k$ leaves and a spanning blue $(k+1)$st power of a path.
Since every tree with at most $k$ leaves can be partitioned into $k-1$ disjoint paths, this implies part (ii) of Corollary~\ref{PathPathPower}.  For $k\geq 2$,  part (i) of Corollary~\ref{PathPathPower} follows from part (ii) in exactly the same way as we deduced Theorem~\ref{PathMultipartite} from Corollary~\ref{PathMultipartiteConnected} in the introduction.  Indeed, to prove part (i) of Corollary, we start with an arbitary colouring of $K_n$.  We add a vertex $v$ to the graph and add red edges between $v$ and all other vertices.  The resulting colouring of $K_{n+1}$ is connected in red.  Therefore we can apply part (ii) of Corollary~\ref{PathPathPower} to $K_n+v$ in order to cover it by $k-1$ disjoint red paths and a disjoint blue $k$th power of a path $P$.  Since all the edges containing $v$ are red, the vertex $v$ cannot be in $P$ (unless $|P|\leq 1$).  Therefore, removing $v$ gives a partition of $K_n$ into $k$ disjoint red paths a blue $k$th power of a path as required.

It remains to verify Proposition~\ref{WeakSeymour}.
One way of doing this is to notice that if  $c(S)\leq |S|/(k+2)$, then the induced blue subgraph of $K_{n}$ on $S$ must have minimal degree at least $\frac{k+1}{k+2}|S|$.  A conjecture of Seymour says that all graphs with minimal degree $\frac{k}{k+1}|S|$ contain a $k$th power of Hamiltonian cycle \cite{SeymourConjecture}.   Seymour's Conjecture has been proven for graphs with sufficiently large order by Koml\'os, S\'ark\"ozy, and Szemer\'edi \cite{Komlos}.  Seymour's Conjecture readily implies Proposition~\ref{WeakSeymour}.  However given that our set $S$ has a very specific structure, it is not hard to prove that it contains a spanning blue $(k+1)$st power of a path without using the full strength of Koml\'os, S\'ark\"ozy, and Szemer\'edi's result.  One  way of doing this is by induction on the number of vertices of $S$.  We omit the details,  because Corollary~\ref{PathPathPower} follows much more readily from the stronger Theorem~\ref{TreeMultipartite}.

\section{Partitioning coloured complete graphs}\label{SectionRamseyPartitioning}
In this section we prove Theorem~\ref{TreeMultipartite}.
We first prove an intermediate lemma.  The following lemma will allow us to take a partition of $K_n$ into a red tree $T$ and a blue multipartite graph $H$ which is ``reasonably balanced,'' and output a partition of $K_n$ into a red tree and a blue balanced complete $(k+1)$-partite graph that we require.

\begin{lemma} \label{Balancedlemma}
Suppose that we have a $2$-edge-coloured complete graph $K_n$  containing $k+1$ sets $A_0, \dots A_k$, $k$ sets $B_1, \dots B_k$, and $k$ sets $N_1, \dots, N_k$ such that the following hold.
\begin{enumerate}[\normalfont (i)]
\item \label{Con:Partition} The sets $A_0, \dots A_k, B_1, \dots B_k$ partition $V(K_n)$.
\item \label{Con:Blue} For all $1\leq i < j\leq k$ all the edges between any of the sets $A_0$, $A_i$, $B_i$, $A_j$, and $B_j$ are blue.
\item \label{Con:N} For all $i$, every red component of $B_i$ intersects $N_i$.
\item \label{Con:A} $|A_0| \geq |A_i|$ for all $i\geq 1$.
\item \label{Con:AB} $|A_i|+|B_i|\geq |A_0|$ for all $i\geq 1$.

\item \label{Con:Last} For all $i\geq 1$ either $|B_i|\leq 2 \min_{t=1}^k |B_t|$ or $|A_i| + |B_i|\leq |A_0| + \min_{t=1}^k |B_t|$ holds.
\end{enumerate}
Then, there is a partition of $K_n$ into $k$ red paths $P_1, \dots, P_k$ and a blue balanced $k+1$ partite graph.  In addition, for each $i$, the path $P_i$ is either empty or starts in $N_i$.
\end{lemma}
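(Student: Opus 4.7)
The plan is to reach the desired partition by a natural construction and then balance it via an extremal/exchange argument. Set $V_0 := A_0$ and, for each $i \ge 1$, $V_i := A_i \cup W_i$ with $W_i \subseteq B_i$, so that $P_i$ covers exactly $B_i \setminus W_i$. Condition~(ii) makes every edge between two distinct $V_j$'s blue, so the graph on $V_0 \cup \cdots \cup V_k$ is automatically a blue $(k+1)$-partite graph regardless of how the $W_i$ are chosen. Since~(ii) also forces every $A_i$--$B_i$ edge to be blue, each $P_i$ lies entirely inside $B_i$ and hence inside a single red component of $B_i$; condition~(iii) then guarantees that such a component meets $N_i$, so the starting-in-$N_i$ requirement comes for free as long as we pick a nonempty path at all.

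Balancing the multipartite parts to a common size $s=|A_0|$ pins down $|W_i| = |A_0|-|A_i|$ (non-negative by~(iv), at most $|B_i|$ by~(v)) and $|P_i| = p_i := |A_i|+|B_i|-|A_0|$. The main difficulty is actually realizing a red path of this exact length $p_i$ starting in $N_i$ inside a red component of $B_i$: a general connected graph does not admit paths of every length through a prescribed vertex (a red star is the extremal obstruction). To absorb this inflexibility I would relax the initial choice, letting $V_0$ shrink to a proper subset of $A_0$ of some size $s<|A_0|$ and redistributing the excess $A_0$-vertices among the $V_j$'s; since all $A_0$-to-block edges are blue this preserves the multipartite structure and gives room to tune the path lengths, at the price of making the paths $P_j$ correspondingly longer.

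Condition~(vi) is precisely what supplies the slack needed for this rebalancing. With $m:=\min_t|B_t|$, it splits indices $i\ge 1$ into two regimes: either $|B_i|\le 2m$, so that $B_i$ is small enough that its largest red component (which exists and meets $N_i$ by~(iii)) is long relative to $p_i$; or $|A_i|+|B_i|\le |A_0|+m$, i.e.\ $p_i\le m$, so that the required path is short enough to fit inside any red component of $B_i$ that reaches $N_i$. Following the style of Section~\ref{SectionRamseyNotation}, I would prove the lemma by considering all partitions of $V(K_n)$ into red paths $P_i$ (each empty or starting in $N_i$) and sets $V_0,\ldots,V_k$ with pairwise-blue cross-edges, choosing one minimising $(\max_j |V_j|, \sum_i |P_i|)$ lexicographically, and then use local exchanges (trimming or extending some $P_i$ by a red neighbour inside its component of $B_i$, or moving an $A_0$-vertex between $V_0$ and some $V_j$) to force the extremal partition to be balanced. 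The main technical obstacle will be the bookkeeping: checking that each exchange preserves the $N_i$-start condition and verifying that the slack granted by~(vi) rules out every remaining unbalanced configuration.
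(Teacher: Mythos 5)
Your plan correctly identifies the natural first attempt ($V_0=A_0$, parts of size $|A_0|$, $P_i$ a red path of order $p_i=|A_i|+|B_i|-|A_0|$ inside a red component of $B_i$) and correctly identifies why it fails (a red component need not contain a path of a prescribed order from a prescribed start, a red star being the obstruction). But the remedy you propose does not work, for two concrete reasons. First, ``redistributing the excess $A_0$-vertices among the $V_j$'s'' is not legitimate: condition (ii) makes edges from $A_0$ to the other blocks blue, but says nothing about edges \emph{inside} $A_0$, which may be red; a vertex moved from $A_0$ into some part $V_j$ may therefore have red edges to the remaining $V_0\subseteq A_0$, destroying the complete-multipartite-in-blue structure (and such vertices cannot be absorbed into the red paths either, since their red neighbours lie only in $A_0$ while the paths must start in $N_i$). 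Second, the slack runs in the wrong direction: shrinking the common part size below $|A_0|$ forces the paths to be \emph{longer}, which aggravates the star obstruction rather than relieving it. Relatedly, your reading of condition (vi) as a path-length guarantee is not sound: neither $|B_i|\le 2\min_t|B_t|$ nor $p_i\le\min_t|B_t|$ implies that a red component of $B_i$ contains a red path of order $p_i$ starting in $N_i$ --- the star defeats both regimes.

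The paper resolves the difficulty by abandoning the idea that the part sizes (and hence the path lengths) are fixed in advance. It argues by induction on $\sum_t|B_t|$, after a relabelling claim that sorts the $|A_i|$ decreasingly and the $|B_i|$ increasingly. If the largest unbalanced block $B_j$ is red-connected, one vertex $v\in B_j\cap N_j$ is peeled off to become the next path vertex, and $N_j$ is replaced by the red neighbourhood $N_r(v)$, which is exactly how the ``starts in $N_j$'' requirement survives without ever prescribing a path length. If $B_j$ is red-disconnected, its smallest red component $B_j^-$ is moved wholesale into one of the $A$-sets ($A_j$, $A_{j+1}$, or swapped against $A_0$, according to the sizes); this is legal because red edges incident to $B_j$ stay inside a single red component of $B_j$, and it is here --- not in bounding path lengths --- that condition (vi) is used, as the invariant that lets (iv)--(vi) be re-established after each such move. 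In particular the final common part size can exceed $|A_0|$, which is precisely the flexibility your extremal scheme lacks: it has no mechanism for enlarging the parts with $B$-vertices. As written, your argument is also only a programme (the exchange analysis is deferred), so even setting aside the flawed rebalancing step, the proof of Lemma~\ref{Balancedlemma} is not yet there.
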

\begin{proof}
The proof is by induction on the quantity $\sum_{t=1}^k |B_t|$.  

First we prove the base case of the induction, i.e. we prove the lemma when $\sum_{t=1}^k |B_t|=0$.  In this case $B_i=\emptyset$ for all $i$, and so conditions (\ref{Con:A}) and (\ref{Con:AB}) imply that $|A_i|=|A_0|$ for all $i$.  Therefore, by (ii), $K_n$ contains a spanning blue complete $(k+1)$-partite graph with parts $A_0, \dots, A_k$. We can take $P_1=\dots= P_k=\emptyset$ to obtain the required partition.

We now prove the induction step.
Suppose that the lemma holds for all  $2$-edge-coloured complete graphs $K_n'$ containing sets $A'_0, \dots A'_k$, $B'_1, \dots B'_k$, and $N'_1, \dots, N'_k$ as in the statement of the lemma but satisfying $\sum_{t=1}^k |B'_t|< \sum_{t=1}^k |B_t|$.  We will show that the lemma holds for $K_n$ as well.

First we show that if there is a partition of $K_n$ satisfying (\ref{Con:Partition}) -- (\ref{Con:Last}), then the sets $A_0,\dots, A_k$ and $B_1,\dots, B_k$ can be relabeled to obtain a partition satisfying (\ref{Con:Partition}) -- (\ref{Con:Last})  and also the following
\begin{align}
|A_0|\geq |A_1| &\geq \dots \geq |A_k| \label{Amonotone},\\
|B_1| &\leq \dots \leq |B_k| \label{Bmonotone}.
\end{align}
  The following claim guarantees this.

\begin{claim}
Let $\sigma$ be a permutation of $(0,1, \dots, k)$ ensuring that $|A_{\sigma(0)}|\geq |A_{\sigma(1)}| \geq \dots \geq |A_{\sigma(k)}|$ holds.  Let $\tau$ be a permutation of $(1, \dots, k)$ ensuring that $|B_{\tau(1)}| \leq \dots \leq |B_{\tau(k)}|$ holds.  
Let $A'_i=A_{\sigma(i)}$, $B'_i=B_{\tau(i)}$, and $N'_i=N_{\tau(i)}$.
Then the sets $A'_i$, $B'_i$, and $N'_i$  satisfy (\ref{Con:Partition}) -- (\ref{Con:Last}).
\end{claim}

\begin{proof}
Notice that $P'_i$, $A'_i$ and $B'_i$ satisfy (\ref{Con:Partition}) -- (\ref{Con:N}) trivially.

Since the sets $A_{i}$ satisfy (\ref{Con:A}), we can assume that $\sigma(0)=0$. This ensures that the sets $A_{\sigma(i)}$ satisfy (\ref{Con:A}).

For (\ref{Con:AB}), note that if for some $j\geq 1$,  $|A_{\sigma(j)}| + |B_{\tau(j)}|<|A_0|$, then we also have $|A_{\sigma(x)}|+|B_{\tau(y)}|<|A_0|$ for all $x\geq j$ and $y \leq j$.  However, the Pigeonhole Principle implies that $\sigma(x)=\tau(y)$ for some $x\geq j$ and $y \leq j$, contradicting the fact that $A_i$ and $B_i$ satisfy~(\ref{Con:AB}) for all $i$.

Suppose that (\ref{Con:Last}) fails to hold.  Then for some $j$, $|B_{\tau(j)}|> 2 \min_{t=1}^k |B_t|$ and $|A_{\sigma(j)}|+|B_{\tau(j)}|> |A_0| + \min_{t=1}^k |B_t|$ both hold.  
If we have $|A_{\tau(i)}|\geq |A_{\sigma(j)}|$  for some $i\geq j$, then $|B_{\tau(i)}|\geq |B_{\tau(j)}|> 2 \min_{t=1}^k |B_t|$ and $|A_{\tau(i)}|+|B_{\tau(i)}|\geq |A_{\sigma(j)}|+|B_{\tau(j)}| > |A_0| + \min_{t=1}^k |B_t|$ both hold, contradicting the fact that $A_i$ and $B_i$ satisfy (\ref{Con:Last}) for all $i$.
Therefore, we can assume that $|A_{\tau(i)}|< |A_{\sigma(j)}|$ for all $i\geq j$.  This, together with  $|A_{\sigma(0)}|\geq |A_{\sigma(1)}| \geq \dots \geq |A_{\sigma(k)}|$ implies that $\{\tau(j), \tau(j+1), \dots, \tau(k)\}\subseteq \{\sigma(j+1), \sigma(j+2), \dots, \sigma(k)\}$, contradicting  $\tau$ being injective.
\end{proof}

By the above claim, without loss of generality we may assume that the  $A_i$s and $B_i$s satisfy (\ref{Amonotone}) and (\ref{Bmonotone}). 

Notice that the lemma holds trivially if we have the following.
\begin{equation}\label{classesequal}
|A_0|=|A_1|+|B_1|= |A_2|+|B_2|=\dots=|A_k|+|B_k|.
\end{equation}
Indeed, if (\ref{classesequal}) holds, then $K_n$ contains a spanning blue complete $(k+1)$-partite graph with parts $A_0,A_1\cup B_1 \dots, A_k\cup B_k$, and so taking $P_1=\dots= 
P_k=\emptyset$ gives the required partition.

 Therefore, we can assume that (\ref{classesequal}) fails to hold, so there is some $j$  such that $|A_j|+|B_j|>|A_0|$. In addition, we can assume that $j$ is as large as possible, and so $|A_i|+|B_i|=|A_0|$ for all $i> j$.  
 
 First we deal with the case when $|B_j|\leq 1$.  Notice that in this case (\ref{Bmonotone}) implies that $|B_i|\leq 1$ for all $i\leq j$.  Therefore for each $i$ satisfying $|A_i|+|B_i|>|A_0|$, we have $|B_i|=1$ and we can let $P_i$ be the single vertex in $B_i$.  For all other $i$, we let $P_i=\emptyset$.
 This ensures that $K_n\setminus(P_1, \dots, P_k)$ is a balanced complete $k$-partite graph with classes $A_1, \dots, A_j, A_{j+1}\cup B_{j+1}, \dots, A_k\cup B_k$, giving the required partition of $K_n$.
 
 For the remainder of the proof, we assume that $|B_j|\geq 2$.
 We split into two cases depending on whether $B_j$ is connected in red or not.
% We will show that it is possible to find sets $A'_i$, $B'_i$, and $N'_i$ which satisfy (\ref{Con:Partition}) -- (\ref{Con:Last}) but have $\sum_{t=1}^k |B'_t|<\sum_{t=1}^k |B_t|$, contradicting minimality of the original partiton.  

\textbf{Case 1:}  Suppose that $B_j$ is connected in red.  Let $v$ be a vertex in $B_j\cap N_j$.  Let $K'_n=K_n-v$, $B'_j= B_j - v$, $N'_j=N_r(v)$ and $A'_i=A_i, B'_i=B_i, N'_i=N_i$ for all other $i$.  We show that the graph $K'_n$ with the sets $A'_i$, $B'_i$, and $N'_i$ satisfies (\ref{Con:Partition}) -- (\ref{Con:Last}). 

Conditions (\ref{Con:Partition}), (\ref{Con:Blue}), and (\ref{Con:A}) hold trivially for the new sets  as a consequence of them holding for the original sets $A_j$ and $B_j$.
Condition (\ref{Con:N}) holds trivially whenever $i\neq j$, and holds for $i=j$ as a consequence of $B_j$ being connected in red.

To prove (\ref{Con:AB}), it is sufficient to show that  $|A'_j|+|B' _j|\geq |A'_0|$.  This is equivalent to $|A_j|+|B_j-v|\geq |A_0|$, which holds as a consequence of  $|A_j|+|B_j|>|A_0|$. 

We now prove (\ref{Con:Last}).
Note that we have $\min_{t=1} ^k |B' _t|=\min(|B_1|, |B' _j|)$.  If $\min_{t=1} ^k |B' _t|=|B_1|$ holds, then (vi) is satisfied for the new sets $A'_0,\dots,A'_k, B'_0, \dots, B'_k$  as a consequence of it being satisfied for the original sets $A_0,\dots,A_k, B_0, \dots, B_k$.  Now, suppose that we have $\min_{t=1} ^k |B' _t|=|B' _j|$.  For $i>j$, we have $|A'_i| + |B'_i|=|A'_0|$ which implies that (vi) holds for these $i$.  If $i\leq j$, then we have $|B_i|\leq |B_j|$ which together with $|B_j|\geq 2$ implies that $B'_i\leq 2|B_j|-2=2|B'_j|$ holds. 

Therefore, the graph $K'_n$ with the sets $A'_i$, $B'_i$,and $N'_i$ satisfies (\ref{Con:Partition}) -- (\ref{Con:Last}).  We also have $\sum_{t=1}^k |B'_t|= \sum_{t=1}^k |B_t|-1$, and so, by induction  $K'_n$ can be partitioned into $k$ red paths $P'_1, \dots, P'_k$ starting in $N'_1, \dots, N'_k$ respectively and a blue balanced $k+1$ partite graph $H$.  Since $P'_j$ starts in $N'_j=N_r(v)$, we have the required partition of $K_n$ into $k$ paths $P'_1, \dots,v+P'_j,\dots  P'_k$ and a blue balanced $k+1$ partite graph $H$.

\textbf{Case 2:}  Suppose that $B_j$ is disconnected in red.  
We will find a new partition of $K_n$ into sets $A'_0, \dots, A'_k$ and $B'_1, \dots, B'_k$, which together with $N_1, \dots, N_k$ satisfy (\ref{Con:Partition}) -- (\ref{Con:Last}).  We will also have  $\sum_{t=1}^k |B'_t|< \sum_{t=1}^k |B_t|$ which implies the lemma by induction.

Let $B^- _j$ be the smallest red component of $B_j$ and $B^+ _j= B_j\setminus B^- _j$.  
There are two subcases, depending on whether we have $|A_j|+|B^-_j|\leq |A_0|$ or not.

\textbf{Case 2.1:} Suppose that we have $|A_j|+|B^-_j|\leq |A_0|$.  
Let $B'_j= B^+ _j$ and $A'_j= A_j\cup B^-_j$, and $A'_i=A_i$, $B'_i=B_i$ for all other $i$.
As before, conditions (\ref{Con:Partition}) -- (\ref{Con:N}) hold trivially.

To prove (\ref{Con:A}), it is sufficient to show that $|A'_0|\geq |A'_j|$ which is true since we are assuming that $|A_j|+|B^-_j|\leq |A_0|$.  

To prove (\ref{Con:AB}), it is sufficient to show that $|A'_j|+|B'_j|\geq |A'_0|$ which holds since  we have $|A'_j|+|B'_j|= |A_j|+|B_j|\geq |A_0|$.

To prove (\ref{Con:Last}), note that we have $\min_{t=1}^k |B'_t|=\min(B_1, B'_j)$.  If $\min_{t=1} ^k |B' _t|=|B_1|$ holds, then (vi) is satisfied for the new sets $A'_0,\dots,A'_k, B'_0, \dots, B'_k$  as a consequence of it being satisfied for the original sets $A_0,\dots,A_k, B_0, \dots, B_k$.  Now, suppose that we have $\min_{t=1} ^k |B' _t|=|B' _j|$.  For $i>j$, we have $|A'_i| + |B'_i|=|A'_0|$ which implies that (vi) holds for these $i$.  If $i\leq j$, then we have $|B_i|\leq |B_j|$ which together with $|B_j|\leq 2|B^+_j|$ implies that $B'_i\leq 2|B'_j|$ holds.

Notice that we have $\sum_{t=1}^k |B'_t|< \sum_{t=1}^k |B_t|$, and so the lemma holds by induction.

\textbf{Case 2.2:}
 Suppose that we have $|A_j|+|B^-_j|> |A_0|$. 
 
We claim that in this case $|B_j|\leq 2\min_{t=1}^k |B_t|$ holds.  Indeed by (\ref{Con:Last}), we have that either $|B_j|\leq 2\min_{t=1}^k |B_t|$ holds, or we have $|A_j|+|B_j|\leq |A_0| + \min_{t=1}^k |B_t|$.  Adding  $|A_j|+|B_j|\leq |A_0| + \min_{t=1}^k |B_t|$ to $|A_j|+|B^-_j|> |A_0|$ gives $|B^+_j|< \min_{t=1}^k |B_t|$.  This, together with  $|B_j|\leq 2|B^+_j|$ implies that  $|B_j|\leq 2\min_{t=1}^k |B_t|$ always holds.
  
There are two cases, depending on whether we have $j=k$ or not.

Suppose that $j\neq k$.  Let $B'_j= B^+ _j$, $A'_{j+1}=A_{j+1}\cup B^-_j$, and $A'_i=A_i$, $B'_i=B_i$ for all other $i$.
As before, conditions (\ref{Con:Partition}) -- (\ref{Con:N}) hold trivially.

To prove (\ref{Con:A}), it is sufficient to show that $|A'_0|\geq |A'_{j+1}|$, which holds as a consequence of $|A_{j+1}|+|B_{j+1}|=|A_0|$ and (\ref{Bmonotone}).

To prove (\ref{Con:AB}), it is sufficient show that $|A'_j|+|B'_j|\geq |A'_0|$, which holds as a consequence of  $|B^+_j|\geq |B^-_j|$ and $|A_j|+|B^-_j|> |A_0|$.

We now prove (\ref{Con:Last}).  For $i\geq j+2$, note that we have $|A'_i|+|B'_i|=|A'_0|$ which implies that (vi) holds for these $i$.  
For  $i\leq j$, (vi) holds since we have $|B'_i|\leq |B_j|\leq 2|B^+_j|=~|B'_j|$.  For $i=j+1$, we have $|A'_{j+1}|+|B'_{j+1}|\leq |A'_0|+ \min_{t=1}^k|B'_t|$  as a consequence of  $|A'_{j+1}|+|B'_{j+1}|= |A_0|+ |B^-_j|$, $|B^- _j|\leq \frac{1}{2}|B_j|$, and $|B_j|\leq 2\min_{t=1}^k|B_t|$.

Notice that we have $\sum_{t=1}^k |B'_t|< \sum_{t=1}^k |B_t|$, and so the lemma holds by induction.

Suppose that $j= k$.
Let $B'_k= B^+ _k$, $A'_k= A_0$, $A'_{0}=A_{k}\cup B^-_k$, and $A'_i=A_i, B'_i=B_i$ for all other $i$.
As before, conditions (\ref{Con:Partition}) -- (\ref{Con:N}) hold trivially.

Since $|A_0|\geq |A'_i|$ for all $i\geq 1$, to prove (\ref{Con:A}), it is sufficient to show that $|A'_0|\geq |A_0|$. This holds since we assumed that $|A_k|+|B^-_k|> |A_0|$.

To prove (\ref{Con:AB}), we have to show that $|A_i|+|B_i|\geq |A_k|+|B^-_k|$ for all $i<k$ and also that $|A_0|+|B^+_k|\geq |A_k|+|B^-_k|$.
 We know that for all $i$ we have $|B^-_k|\leq \frac{1}{2}|B_k|\leq |B_i|$ which, combined with (\ref{Amonotone}), implies that we have $|A_i|+|B_i|\geq |A_k|+|B^-_k|$. We also know that $|B^+_k|\geq |B^-_k|$ which, combined with (\ref{Amonotone}), implies that we have $|A_0|+|B^+_k|\geq |A_k|+|B^-_k|$.

 To prove (\ref{Con:Last}), note that we have $\min_{t=1}^k |B'_t|=\min(B_1, B'_k)$.  
If $\min_{t=1} ^k |B' _t|=|B' _k|$ holds,  then we have $|B'_i|\leq 2|B'_k|$  for all $i$ as a consequence of  (\ref{Bmonotone}) and  $2|B'_k|\geq |B_k|$.
Suppose that $\min_{t=1} ^k |B' _t|=|B'_1|$ holds. Then for $i<k$, (vi) is satisfied for the new sets $A'_0,\dots,A'_k, B'_0, \dots, B'_k$  as a consequence of it being satisfied for the original sets $A_0,\dots,A_k, B_0, \dots, B_k$ and $|A'_0|\geq |A_0|$.  For $i=k$, (vi) holds since we have $|B'_k|\leq |B_k|\leq 2\min_{t=1}^k|B_t|$.

Notice that we have $\sum_{t=1}^k |B'_t|< \sum_{t=1}^k |B_t|$, and so the lemma holds by induction.
\end{proof}

We now use the above lemma to prove Theorem~\ref{TreeMultipartite}.  The proof has many similarities to that of Theorem~\ref{WeakTreeMultipartite}

\begin{proof}[Proof of Theorem~\ref{TreeMultipartite}.]

We will partition $K_n$ into a red tree $T$, and sets $A_0, A_1, \dots, A_k$ and $B_1, \dots, B_k$ with certain properties. For convenience we will define $A=A_0\cup A_1\cup \dots \cup A_k$   and $B=B_1\cup \dots \cup B_k$.  The tree $T$ will have $l$ leaves which will be called $v_1, v_2, \dots, v_l$.  
For a set $S\subseteq K_n$, let $c(S)$ be the order of the largest red component of $K_n[S]$.
Define $f(S)$ to be the number of red components contained in $S$ of order $c(A\cup B)$.  The tree $T$, and sets $A_0, A_1, \dots, A_k$ and $B_1, \dots, B_k$ are chosen to satisfy the following.

\begin{enumerate}[(I)]
\item For $1\leq i< j\leq k$, all the edges between $A_0$,$A_i$, $A_j$, $B_i$, and $B_j$ are blue.
\item $T$ has  $l$ leaves $v_1, \dots, v_l$, where $l\leq k$.  For $i=1, \dots, l$, the leaf  $v_i$, is joined to every red component of $B_i$ by a red edge.  
\item $c(A\cup B)$ is as small as possible, whilst keeping (I) -- (II) true.
\item $\sum_{t=1}^k|f(B_t)-\frac{1}{2}|$ is as small as possible, whilst keeping (I) -- (III) true.
\item $f(A)$ is as small as possible, whilst keeping (I) -- (IV) true.
\item $|T|$  is as small as possible, whilst keeping (I) -- (V) true.
\item $|\{i\in\{1, \dots, k\}: |B_i|\geq c(A\cup B)\}|$ is as large as possible, whilst keeping (I)~--~(VI) true.
\item $\sum_{\{t:|B_t|< c(A\cup B)\}} |B_t|$ is as large as possible, whilst keeping (I) -- (VII) true.
\item $\sum_{t=1}^k |B_t|$ is as small as possible, whilst keeping (I) -- (VIII) true.
\item $\max_{t=1}^k{|A_t|}$ is as small as possible, whilst keeping (I) -- (IX) true.
\item $|\{i\in\{1, \dots, k\}: |A_i|= \max_{t=1} ^k{|A_t|}\}|$ is as small as possible, whilst keeping (I)~--~(X) true.
\end{enumerate}

In order to prove Theorem~\ref{TreeMultipartite} we will show that the partition of $A\cup B$ into $A_i$ and~$B_i$ satisfies conditions (\ref{Con:Partition}), (\ref{Con:Blue}), (\ref{Con:A}), (\ref{Con:AB}), and  (\ref{Con:Last}) of Lemma~\ref{Balancedlemma}.  Then, Lemma~\ref{Balancedlemma} will easily imply the theorem.

Without loss of generality, we may assume that the $A_i$s are labelled such that we have
\begin{equation}\label{Aordered}
|A_0|\geq |A_1|\geq \dots \geq |A_k|.
\end{equation}

We begin by proving a sequence of claims.
\begin{claim}\label{onecomponent}
 For each $i$, $f(B_i)$ is either $0$ or $1$.
\end{claim}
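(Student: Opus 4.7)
The plan is to argue by contradiction using condition~(IV). Suppose that $f(B_i)\geq 2$ for some $i$. Let $C_1,C_2\subseteq B_i$ be two distinct red components, each of order $c(A\cup B)$; by~(II) the leaf $v_i$ is joined by a red edge to both $C_1$ and $C_2$.

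The driving observation is that $x\mapsto |x-\tfrac12|$ is strictly convex on the nonnegative integers and is minimized precisely at $x\in\{0,1\}$. Consequently, transferring one red component of order $c(A\cup B)$ out of a $B_t$ with $f(B_t)\geq 2$ into a $B_s$ with $f(B_s)=0$ strictly decreases $\sum_t |f(B_t)-\tfrac12|$ by exactly $1$. So it suffices to produce a new partition satisfying (I)--(III) in which $C_1$ has been relocated from $B_i$ into some $B_j$ with $f(B_j)=0$: this would contradict the minimality in~(IV).

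The move I would try is to replace $B_i$ by $B_i\setminus C_1$ and $B_j$ by $B_j\cup C_1$ for some suitable $j$ with $f(B_j)=0$, and to simultaneously reassign leaf labels so that the leaf $v_i$, which is red-adjacent to $C_1$, plays the role of the leaf paired with the new $B_j$, while the leaf previously associated with $B_j$ takes on the shrunken $B_i\setminus C_1$. For this candidate partition, condition~(I) is straightforward to check: edges from $C_1$ to $A_s,B_s$ with $s\notin\{i,j\}$ are blue by the old (I); edges from $C_1$ to $B_i\setminus C_1$ are blue because $C_1$ is a red component of $B_i$; and edges from $C_1$ to the old $B_j$ are blue by the old (I), so $C_1$ persists as a separate red component in the new $B_j$ rather than merging. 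Condition~(III) is immediate since no red component grows in size, and condition~(II) is maintained by the leaf-label swap together with the fact that $v_i$ is still red-adjacent to every remaining red component of $B_i\setminus C_1$.

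The two main obstacles I expect are: (a) the edges from $C_1$ to $A_i$ are not forced blue by~(I), so if some of them are red the proposed new partition fails (I) and requires a more refined local modification, such as absorbing problematic vertices of $A_i$ into the new $B_j$ or into $T$ via $v_i$; and (b) the case in which every $B_j$ with $j\neq i$ satisfies $f(B_j)\geq 1$, so a pure transfer yields no strict decrease, forcing one to perform instead a swap of $C_1$ with a strictly smaller red component of some other $B_j$, using the minimality of $c(A\cup B)$ in~(III) and the later conditions (V)--(XI) to show such a configuration is impossible. The heart of the proof will be handling obstacle (a); I expect to dispose of (b) by observing that if every $B_j$ already contains a component of size $c(A\cup B)$, then rebalancing through $A_0$ and using (V)--(XI) produces a strictly better configuration at some earlier stage of the lexicographic optimization.
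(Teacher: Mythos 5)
Your overall strategy (contradict minimality in (IV) by relocating one of the $\geq 2$ components of order $c(A\cup B)$ out of $B_i$) is the right one, but the specific relocation you choose does not go through, and the two obstacles you flag are exactly where it breaks. First, the leaf relabelling does not preserve (II): after the swap you would need $v_i$ to be red-adjacent to every red component of the \emph{old} $B_j$ (not just to $C_1$), and you would need the old leaf $v_j$ to be red-adjacent to every red component of $B_i\setminus C_1$; neither is available, and your stated justification (that $v_i$ is still adjacent to the remaining components of $B_i\setminus C_1$) is only relevant if $v_i$ stays paired with $B_i\setminus C_1$, which contradicts the swap. Second, case (b) is not disposed of: if every $B_j$ with $j\neq i$ already satisfies $f(B_j)\geq 1$, then transferring $C_1$ into any $B_j$ (or swapping it against a smaller component of some $B_j$) changes $\sum_t |f(B_t)-\frac{1}{2}|$ by $-1+1=0$, so no contradiction with (IV) arises, and the closing remark about rebalancing through $A_0$ and appealing to (V)--(XI) is not an argument. (Your obstacle (a), by contrast, is vacuous under the reading of (I) that the paper uses and needs, namely that all edges between any two of the listed sets, including $A_i$ and $B_i$, are blue.)

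The missing idea is that the receiving class need not be one of the $B_j$ at all: the paper simply moves a component $C\subseteq B_i$ of order $c(A\cup B)$ into $A_0$, setting $B_i'=B_i\setminus C$, $A_0'=A_0\cup C$, and leaving everything else, including $T$ and its leaves, unchanged. All edges from $C$ to the other classes are blue by (I), and edges from $C$ to $B_i\setminus C$ are blue because $C$ is a red component of $B_i$, so (I)--(III) are preserved with no leaf bookkeeping; since $f(B_i)\geq 2$, the term $|f(B_i)-\frac{1}{2}|$ drops by exactly $1$ while all other terms in (IV) are unchanged, giving the contradiction. The only price is that $f(A)$ may increase, but that is condition (V), which has strictly lower priority than (IV). This single move handles both of your obstacles at once; as it stands, your proposal has a genuine gap in case (b) and in the verification of (II).
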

\begin{proof}
 Suppose that $f(B_i)\geq 2$.  Let $C$ be a red component in $B_i$ of order $c(A\cup B)$.  Let $B'_i=B_i\setminus C$, $A'_0=A_0\cup C$, $T'=T$ and $A'_j=A_j$, $B'_j=B_j$ for other $j$.  It is easy to see that the new partition satisfies (I) -- (III).   
   We have that $f(B'_i)=f(B_i)-1$, which combined with $f(B_i)\geq 2$ implies that $|f(B'_i)-\frac{1}{2}|<|f(B_i)-\frac{1}{2}|$ contradicting minimality of the original partition in (IV).
\end{proof}

\begin{claim}\label{largeB}
 If we have $f(B_i)=1$ for some $i$, then we also have $|B_i|=c(A\cup B)$.
\end{claim}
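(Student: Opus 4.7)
The plan is to argue by contradiction: assume $f(B_i)=1$ and $|B_i|>c(A\cup B)$. Let $C$ be the unique red component of $B_i$ of order $c(A\cup B)$, and set $R:=B_i\setminus C$, which is nonempty by assumption. Two structural observations drive the argument: (a) every red component contained in $R$ has order strictly less than $c(A\cup B)$, since $C$ was the unique maximum component of $B_i$, and (b) there are no red edges between $R$ and $C$, because $C$ is a red component of $B_i$.

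I would then construct a new partition by moving $R$ into $A_0$: put $A_0':=A_0\cup R$, $B_i':=C$, and keep $T$ and every other $A_j,B_j$ unchanged. This new partition should satisfy (I)--(VIII) with the same values as the original but strictly decrease $\sum_t|B_t|$, contradicting minimality in (IX). Condition (I) is preserved by (b) together with the fact that every edge from $R\subseteq B_i$ to any $A_j$, $A_i$, or $B_j$ with $j\neq i$ was already blue by the original (I). For (II), $T$ is unchanged and the leaf $v_i$ (if $i\leq l$) was joined by a red edge to every red component of $B_i$, hence in particular to the single red component $C=B_i'$. Condition (III) is immediate since $A'\cup B'=A\cup B$ as vertex sets, so $c(A'\cup B')=c(A\cup B)$ and the global threshold used to define $f$ is unchanged.

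The heart of the argument is (IV)--(V). Observation (a) says that the red components of $A_0'=A_0\cup R$ are those of $A_0$ together with those of $R$ (since edges between $A_0$ and $R\subseteq B_i$ are blue by (I)), and all the new ones have order less than $c(A\cup B)$; thus $f(A_0')=f(A_0)$ and therefore $f(A')=f(A)$. Similarly $f(B_i')=f(C)=1=f(B_i)$, so $\sum_t|f(B_t')-\frac{1}{2}|$ is unchanged. Conditions (VI)--(VIII) are immediate: $T$ is untouched, $|B_i'|=c(A\cup B)$ still counts towards $|\{i:|B_i|\geq c(A\cup B)\}|$, and the set $\{t:|B_t|<c(A\cup B)\}$ is unaffected. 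Finally $\sum_t|B_t'|=\sum_t|B_t|-|R|<\sum_t|B_t|$, yielding the required contradiction with (IX). There is no genuinely difficult step here; the only real obstacle is the careful bookkeeping, and in particular using observation (a) to guarantee that no new maximum-sized red component sneaks into $A_0'$, which would otherwise spoil the counts in (IV) and (V).
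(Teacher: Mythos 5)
Your proof is correct and follows essentially the same route as the paper: both argue by contradiction, moving the small red components of $B_i$ (you move all of $B_i\setminus C$ at once, the paper moves a single component of order less than $c(A\cup B)$) into the $A$-side, checking that (I)--(VIII) are preserved, and contradicting the minimality of $\sum_t |B_t|$ in (IX). The bookkeeping you do, in particular that no new component of order $c(A\cup B)$ appears in $A$ and that $|B_i'|=c(A\cup B)$ keeps (VII) and (VIII) intact, matches the paper's (terser) verification.
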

\begin{proof}
Suppose that $f(B_i)=1$ and $|B_i|>c(A\cup B)$ both hold. Then $B_i$ contains some red connected component $C$ of order strictly less than $c(A\cup B)$.
Let $T'=T$, $A'_0=A\cup C$, $B'_i=B_i\setminus C$, and  $A'_t=\emptyset$, $B'_t=B_t$ for all other $t$.

It is easy to see that the new partition satisfies (I) -- (VIII).  However $|B'_i|<|B_i|$ and  $|B'_t|=|B_t|$ for $t\neq i$ contradicts minimality of the original partition in (IX).
\end{proof}

\begin{claim}\label{onelargecomponentinA}
 We have that $f(A)\geq 1$.
\end{claim}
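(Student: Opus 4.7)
The plan is to suppose for contradiction that $f(A) = 0$ and then to construct a new partition whose corresponding $c$-value is strictly smaller than $c(A\cup B)$, contradicting the minimality in~(III). Condition~(I) forces every red component of $K_n[A\cup B]$ to be contained in a single $A_j$ or a single $B_j$, so the hypothesis $f(A)=0$ forces some $B_i$ to contain a red component of order $c(A\cup B)$. Applying Claim~\ref{onecomponent} then Claim~\ref{largeB} to such an $i$ gives $f(B_i)=1$ and $|B_i|=c(A\cup B)$, so $B_i$ is itself a single red component of order exactly $c(A\cup B)$. Write $I=\{i:f(B_i)=1\}$, which is nonempty.

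For each $i\in I$ I will transfer one vertex $x_i\in B_i$ into $T$ along a red edge. When $i\le l$, condition~(II) directly supplies a red edge $v_ix_i$ from the leaf $v_i$ to some $x_i\in B_i$. When $i>l$, the red-connectedness of $K_n$, together with the fact (from (I)) that every edge of $K_n[A\cup B]$ between distinct parts is blue, forces a red edge from some $u_i\in V(T)$ into $B_i$, and I take $x_i$ to be the $B_i$-endpoint of this edge. Let $T'$ be the red tree obtained by adding every such $x_i$ to $T$ along these red edges, and let $A'_j=A_j$ for all $j$, $B'_i=B_i\setminus\{x_i\}$ for $i\in I$, and $B'_j=B_j$ otherwise.

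I would then verify that the new partition satisfies (I)--(II). Condition~(I) is immediate because $B'_i\subseteq B_i$, so every cross-edge remains blue. Condition~(II) is preserved because for each $i\in I\cap\{1,\dots,l\}$ the old leaf $v_i$ becomes internal while the new leaf $x_i$ takes its place, and $x_i$ has a red edge to every red component of $B_i\setminus\{x_i\}$ (since $B_i$ was a single red component, $x_i$ meets each component of $B_i-x_i$ via a red edge); the total leaf count is at most $l+|I\cap\{l+1,\dots,k\}|\le k$. Now every red component of $K_n[A'\cup B']$ lies either in some $A_j$, of order at most $c(A)<c(A\cup B)$ by the assumption $f(A)=0$, or in some $B'_j$, of order at most $c(A\cup B)-1$ (namely $|B_j|-1$ for $j\in I$ and already strictly less than $c(A\cup B)$ for $j\notin I$ by the definition of $I$). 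Therefore $c(A'\cup B')\le c(A\cup B)-1$, contradicting~(III).

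The main obstacle is the subcase $i>l$: here one must be able to reassign leaves so that the leaf-to-$B'_j$ correspondence demanded by~(II) is restored after the new leaves $x_i$ appear, and one must ensure the total leaf count stays at most $k$. Both issues are handled by relabeling the previously leaf-less $B_j$ slots (those with $j>l$ originally) to absorb the new $x_i$'s, and by the elementary bound $|I\cap\{l+1,\dots,k\}|\le k-l$ ensuring that the leaf count of $T'$ never exceeds $k$.
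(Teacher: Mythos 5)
Your proof is correct and rests on the same mechanism as the paper's: assuming $f(A)=0$, every red component of order $c(A\cup B)$ sits inside some $B_i$, and you pull one vertex of each such component into $T$ along a red edge supplied by (II) (for leaf-indexed sets) or by red-connectedness together with (I) (for the others), which strictly decreases the largest red component outside the tree and contradicts (III). The only real difference is bookkeeping: the paper simply sets $A'_0=A\cup B\setminus\{u_1,\dots,u_k\}$ and empties all the other $A_j,B_j$, so (I) and (II) hold trivially and no leaf-to-set reassignment is needed, whereas you keep the fine partition, invoke Claim~\ref{largeB} (which the paper's version does not need), and must handle the relabelling issue you flag at the end---this does work, since (II) imposes nothing on sets without an associated leaf (in particular a set $B_t$ whose leaf $v_t$ is used as an attachment point can simply be demoted to a leafless index), but the paper's collapsing trick is the cleaner way to dodge exactly this point.
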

\begin{proof}
Suppose that we have $f(A)=0$. Then all the red components of order $c(A\cup B)$ of $A\cup B$ must be contained in~$B$.  
For each $i\in\{1, \dots, k\}$, let $C_i$ be a red component of order $c(A\cup B)$ contained in $B_i$ (if one exists).  By Claim~\ref{onecomponent} any red component of $A\cup B$ or order $c(A\cup B)$ must be one of the~$C_i$s.  
By (II), for $i\in\{1, \dots, l\}$, if $C_i$ exists, then $v_i$ has a red neighbour $u_i$ in $C_i$.  By red-connectedness of $K_n$ and part (I), every $C_i$ must be connected to $T$ by a red edge.  Therefore, for $i\in\{l+1, \dots, k\}$, if $C_i$ exists, then there is a red edge $u_i w_i$ between $u_i\in C_i$ and some $w_i\in T$.  

Let $A'_0=A\cup B\setminus\{u_1, \dots, u_k\}$ and $A'_j=B'_j=\emptyset$ for $j\geq 1$.  Let $T'$ be the tree with vertex set $V(T)\cup\{u_1, \dots,u_k\}$ formed from $T$ by joining $u_i$ to $v_i$ for $i=1, \dots, l$ and $u_i$ to~$w_i$ for $i=l+1, \dots, k$.

Clearly the new partition satisfies (I) and (II).
However since each of the largest components of $A\cup B$ lost a vertex, we must have $c(K_n\setminus T)<c(A\cup B)$ contradicting minimality of the original partition in (III).
\end{proof}

\begin{claim}\label{unconnectedB}
 If $i> l$, then $f(B_i)=1$ holds.
\end{claim}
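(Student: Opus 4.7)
The plan is to argue by contradiction using extremality condition (V). By Claim~\ref{onecomponent}, $f(B_i) \in \{0, 1\}$ for every $i$, so it suffices to rule out $f(B_i) = 0$ when $i > l$. Assume therefore that $f(B_i) = 0$ for some $i > l$. By Claim~\ref{onelargecomponentinA}, $A$ contains some red component $C$ of order $c(A \cup B)$; since (I) forces all edges between distinct $A_t$'s to be blue, this component must lie inside a single piece $A_s$, with $s \in \{0, 1, \ldots, k\}$.

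I then form a new partition by moving $C$ from $A_s$ to $B_i$: set $A'_s := A_s \setminus C$, $B'_i := B_i \cup C$, and leave the tree $T$ and all remaining pieces unchanged. The main verification is that this preserves properties (I)--(IV). For (I), the only new ``mixed'' edges to examine are those between $A'_s$ and $B'_i$, and those inside $B'_i$ between $C$ and the old $B_i$; all the $A_s$-to-$B_i$ edges are blue by the original (I) (for $k \geq 2$ this includes the case $s = i$), and the edges between $A_s \setminus C$ and $C$ are blue because $C$ is a red component of $K_n[A_s]$. Condition (II) is unaffected because $T$ is untouched and no $B_j$ with $j \leq l$ is changed. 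Condition (III) holds with equality: since there are no red edges between $C$ and the old $B_i$, the red components of $K_n[B'_i]$ are exactly those of $K_n[B_i]$ together with $C$ itself, so $c(A' \cup B') = c(A \cup B)$. For (IV), the sum changes only in the $i$-th term, where $f(B_i) = 0$ becomes $f(B'_i) = 1$, and $|0 - 1/2| = |1 - 1/2|$, so the total is unchanged.

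However, the number of large red components in $A$ has dropped by one: the component $C$, formerly counted by $f(A)$, now lies in $B'_i$, giving $f(A') = f(A) - 1 < f(A)$. This contradicts the minimality of $f(A)$ in~(V), completing the argument. The one subtle step is confirming that placing $C$ inside $B_i$ does not accidentally merge $C$ with a red component of $B_i$ to produce a red component of order larger than $c(A \cup B)$ -- and this is exactly what condition (I) rules out, by forcing all $A_s$-to-$B_i$ edges to be blue.
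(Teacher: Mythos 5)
Your proof is correct and follows essentially the same argument as the paper: take a red component $C$ of $A$ of order $c(A\cup B)$ (Claim~\ref{onelargecomponentinA}), move it into $B_i$ (legitimate precisely because $i>l$, so (II) is untouched), check that (I)--(IV) are preserved, and contradict the minimality of $f(A)$ in (V). The only cosmetic difference is that the paper simultaneously merges all of $A\setminus C$ into $A'_0$ (which makes checking (I) trivial), whereas you remove $C$ from its piece $A_s$ and verify the blueness of the relevant edges directly; both variants work.
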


\begin{proof}
Suppose that $f(B_i)= 0$ for some $i$.

By Claim~\ref{onelargecomponentinA}, there is a red component $C$ of order $c(A\cup B)$ in $A$.
Let $T'=T$, $A'_0=A\setminus C$, $B'_i=B_i\cup C$, and  $A'_t=\emptyset$, $B'_t=B_t$ for all other $t$.

It is easy to see that the new partition satisfies (I) -- (IV).  However we have $f(A)=f(A)-1$ contradicting minimality of the original partition in (V).
\end{proof}

\begin{claim}\label{Abound}
For every $i$, we have $|A_0|\leq |A_i| + c(A\cup B)$.
\end{claim}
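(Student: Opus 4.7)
I would argue by contradiction. Suppose that $|A_0| > |A_i| + c(A\cup B)$ for some $i \in \{1, \dots, k\}$. Let $C$ be a red component of $A_0$; then $|C| \leq c(A\cup B) < |A_0| - |A_i|$.

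The plan is to consider the modified partition obtained by moving $C$ from $A_0$ into $A_i$: set $A'_0 := A_0 \setminus C$ and $A'_i := A_i \cup C$, leaving the tree $T$ and the other $A_t$, $B_t$ unchanged. I would first check that this new partition still satisfies (I)--(IX). The essential point for (I) is that $C$ is a red component of $A_0$, so edges from $C$ to $A_0 \setminus C$ are all blue; combined with the original (I), the new partition has all the required inter-set blue edges. Properties (II) and (VI) are immediate since $T$ is unchanged, and (III)--(V) and (VII)--(IX) all hold because $A' \cup B' = A \cup B$ as a vertex set (hence the same red components, merely reassigned between $A'_0$ and $A'_i$) and the $B_t$ are unchanged.

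To obtain a contradiction I would then analyze (X)--(XI). We have $|A'_i| = |A_i| + |C| \leq |A_i| + c(A\cup B) < |A_0|$, so the new $|A'_i|$ never reaches $|A_0|$, but its relationship to the other $|A_t|$ is what matters for (X) and (XI). The argument splits on where $|A_i|$ sits relative to $\max_{t=1}^k |A_t|$. When $|A_i|$ attains this maximum, moving $C$ pushes $A'_i$ above the original maximum, and after relabeling (which is legitimate here because the shrunken $A'_0$ no longer has maximum size) one gets a strict improvement in (XI), since the original maximum has one fewer index attaining it.

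The main obstacle is handling the other case, when $|A_i|$ is strictly below $\max_{t=1}^k |A_t|$: then a single such shift need not improve (X) or (XI) directly. My expected resolution is either to iterate the move, or to combine it with a complementary shift --- for instance moving a red component of another $A_j$ into $A_0$, or swapping index $0$ with some index $j$ when the pair $(A_j, B_j)$ has all blue edges between the two parts --- so as to strictly improve some earlier extremality condition. The hardest step is verifying that such a valid rearrangement always exists when the claim fails; I expect the WLOG ordering $|A_0| \geq |A_1| \geq \dots \geq |A_k|$ together with the structural information from Claims~\ref{onecomponent}--\ref{unconnectedB} to be essential in pinning down the extremal partition enough to force such a move to exist.
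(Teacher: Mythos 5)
Your opening move is exactly the paper's: take a red component $C$ of $A_0$ (nonempty since $|A_0|>|A_i|+c(A\cup B)\geq 0$), set $A'_0=A_0\setminus C$, $A'_i=A_i\cup C$, keep $T$ and everything else fixed, and check that (I)--(IX) survive. Where your proposal breaks down is in extracting the contradiction from (X)--(XI), and the gap you flag is real as you have set things up: with the maximum read as $\max_{t=1}^k|A_t|$ (excluding $A_0$), a single exchange can indeed fail to improve (X) or (XI) --- in fact even your ``handled'' case is wrong, since when $|A_i|$ attains $\max_{t=1}^k|A_t|$ the new set $A'_i$ rises \emph{above} that maximum, which makes the (X)-quantity larger, not the (XI)-count smaller (e.g.\ sizes $10,6,6,6$ with $|C|=3$ moved into the last set give $7,6,6,9$, worse in (X) under any relabelling). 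Your proposed rescues (iterating the move, or pairing it with a complementary shift) are not needed and are not shown to terminate or to exist.

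The paper closes the argument in one step by measuring the maximum over \emph{all} the sets $A_0,\dots,A_k$: under the ordering (\ref{Aordered}) this maximum is $|A_0|$ (the paper's proof of Claim~\ref{Abound} explicitly works with $\max_{t=0}^k|A'_t|$). Since $|C|\leq c(A\cup B)<|A_0|-|A_i|$, both modified sets satisfy $|A'_0|<|A_0|$ and $|A'_i|<|A_0|$, while every other $A_t$ and every $B_t$ is untouched. Hence either the largest $A$-set strictly shrinks, contradicting the minimality in (X), or the maximum value $|A_0|$ is still attained by some unchanged set, in which case the number of sets attaining it has dropped by one (namely $A_0$ itself), contradicting (XI). No case split on where $|A_i|$ sits, no iteration, and no auxiliary shift is required; the only extra ingredient you were missing is that the extremal quantities in (X)--(XI) must be interpreted so that $A_0$, the largest class, is included in the maximum.
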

\begin{proof}
Suppose that for some $i$ we have $|A_0|> |A_i| + c(A\cup B)$.  Let $C$ be any red component of $A_0$.  We have $|C|\leq c(A\cup B)$.  Let $A'_0=A_0\setminus C$, $A'_i=A_i\cup C$, $T'=T$ and $A'_j=A_j$, $B'_j=B_j$ otherwise.  It is easy to see that $T'$, $A'_j$, and $B'_j$ will satisfy (I) -- (IX).  If the new partition satisfies (X), then we must have $\max_{t=0}^k |A'_t|=|A_0|$.
However $|A_0|> |A_i| + c(A\cup B)$ ensures that we have $|A'_0|, |A'_i|<|A_0|)$ meaning that the quantity $|\{i\in\{1, \dots, k\}: |A'_i|= |A'_0|\}|$ must be smaller than it was in the original partition, contradicting (XI). 
\end{proof}

\begin{claim} \label{BLowerBound}
For every $i$, we have $|B_i|\geq c(A\cup B)$.
\end{claim}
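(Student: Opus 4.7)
Suppose for contradiction that $|B_i|<c(A\cup B)$ for some $i$. By Claim~\ref{largeB}, $f(B_i)=0$; combined with Claim~\ref{unconnectedB} this forces $i\le l$, so $v_i$ is a leaf of $T$. By Claim~\ref{onelargecomponentinA} there is a red component $C\subseteq A$ with $|C|=c(A\cup B)$; since $|C|$ is maximal, $C$ is in fact a full red component of $K_n[A\cup B]$, with no red edges into $B$. Red-connectedness of $K_n$ then forces a red edge $tc$ with $t\in T$ and $c\in C$.

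The main plan is: given a red component $C^*$ of $A$ that is red-adjacent to $v_i$, move $C^*$ into $B_i$. Formally, let $B_i':=B_i\cup C^*$ and remove $C^*$ from whichever $A_j$ contained it, leaving $T$ and all other $A_{j'}',B_{j'}'$ unchanged. When $C^*\subseteq A_0\cup A_i$, or when $|C^*|=c(A\cup B)$ (so $C^*$ is a full red component of $A\cup B$), one checks that condition~(I) is preserved, as $C^*$ has no red edges into any $B_m$ with $m\ne i$. Condition~(II) is preserved because the old red components of $B_i$ remain red-adjacent to $v_i$ by old~(II), and $v_i$ is red-adjacent to $C^*$ by choice of $u\in C^*$. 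Condition~(III) holds since $A'\cup B'=A\cup B$ as a set, and a short calculation verifies~(IV). Then: if $|C^*|=c(A\cup B)$, $f(A')=f(A)-1$ contradicts (V); if $|C^*|<c(A\cup B)$ and $|B_i|+|C^*|\ge c(A\cup B)$, (VII) strictly improves; if $|C^*|<c(A\cup B)$ and $|B_i|+|C^*|<c(A\cup B)$, (VIII) strictly improves.

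It remains to rule out the cases where no such $C^*$ exists. The cleanest sub-case is when $v_i$ has no red edges to $A$ at all and all of $v_i$'s red $B$-neighbours lie in $B_i$; then the move $T':=T-v_i$, $B_i':=B_i\cup\{v_i\}$ preserves~(I), and since $v_i$ merges the red components of $B_i$ into one of size $|B_i|+1\le c(A\cup B)$, conditions (II)--(V) are tied while $|T'|<|T|$ strictly improves~(VI), a contradiction. The main obstacle is the remaining sub-cases, where $v_i$ has red edges into $A_m$ or $B_{j'}$ for some $m,j'\ne 0,i$: here the direct move above violates~(I). The intended resolution is a more elaborate modification that uses the red edge $tc$ to extend $T$ through $c$, prunes $v_i$, and reassigns the leaf-to-$B$ identification in~(II), with the aim of landing the contradiction in one of the later conditions (V)--(XI). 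The detailed case analysis will parallel those of Claims~\ref{onecomponent}--\ref{Abound}, with the main technical care going into tracking the effect of each move on~(VI) and on the matching between leaves of $T$ and the $B_j$'s required by~(II).
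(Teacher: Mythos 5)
There is a genuine gap: the heart of this claim is the case in which $v_i$'s red neighbours lie in $B_j$ for some $j\neq i$, and your proposal only gestures at it ("the intended resolution is a more elaborate modification\dots"). Your two completed moves (absorbing a red component of $A$ adjacent to $v_i$ into $B_i$, and pruning $v_i$ into $B_i$ when all its red neighbours lie in $B_i$) do match the paper's first two steps, including the use of (VII)/(VIII), (V) and (VI) as the contradicting conditions. But the remaining case cannot be dispatched by "extend $T$ through the red edge $tc$, prune $v_i$, and reassign the leaf-to-$B$ identification": deleting $v_i$ leaves the red components of $B_i$ (and, in the problematic configuration, $B_j$) without a leaf certifying condition (II), and simply grafting the large $A$-component onto $T$ neither decreases $|T|$ nor improves (III)--(V), so no contradiction materialises. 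The paper's actual argument for this case is substantially different in character: when $j\le l$ it uses condition (II) to find red neighbours $u_i$ of $v_i$ and $u_j$ of $v_j$ inside one red component $C$ of $B_j$, takes a red path $P\subseteq C$ joining them, closes a cycle through $T$, attaches the large red component $C_A\subseteq A$ (given by Claim~\ref{onelargecomponentinA}) by a red edge, and then deletes an edge at a degree-$3$ vertex of the cycle to recover a tree with at most $k$ leaves, while absorbing $B_i\cup(B_j\setminus P)$ into $A_0$; this loses exactly one largest component from $A$ and contradicts (V), with a three-way subcase analysis on where $C_A$ attaches to keep (II) intact, and a separate, simpler component swap (again contradicting (V)) when $j>l$. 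None of these ideas are present in your sketch, and they do not "parallel" the easy Claims preceding this one, so the proof is not complete.

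A secondary point: your restriction of the direct move to $C^*\subseteq A_0\cup A_i$ or $|C^*|=c(A\cup B)$ is unnecessary. Condition (I) already forces every edge between an $A$-set and a $B$-set (and between distinct $A$-sets) to be blue, so a red component of $A$ meeting $N_r(v_i)$ never has red edges into any $B_m$ and can be moved into $B_i$ regardless of which $A_m$ contains it or of its size; the paper performs exactly this unrestricted move. This misreading is what creates the spurious "remaining sub-case" with red edges into $A_m$, but fixing it does not repair the real omission, namely the $N_r(v_i)\cap B_j\neq\emptyset$, $j\neq i$ case described above.
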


\begin{proof} 

Suppose that $|B_i|< c(A\cup B)$ for some $i$.  Notice that this implies that $f(B_i)=0$.
By Claim~\ref{unconnectedB}, we have that $i\leq l$.

First suppose that we have $N_r(v_i)\cap A\neq \emptyset$.  
Let $C$ be a red component of $A$ which intersects $N_r(v_i)$. 
Let  $T'=T$, $B'_i=B_i\cup C$, and $A'_t=A_t\setminus C$, $B'_t=B_t$, for other $t$.

The new partition satisfies (I) trivially.  By choice of $C$, new partition satisfies (II). 
It is easy to see that $c(A'_t), c(B'_t)\leq c(A\cup B)$ for every $t$ which implies that (III) holds for the new partition.
Since $f(B_i)=0$ holds, we have that $f(B'_i)\leq 1$ and hence $|f(B'_i)-\frac{1}{2}|=|f(B_i)-\frac{1}{2}|$ which implies that (IV) holds for the new partition.

It is easy to see that $f(A'_t)\leq f(A_t)$ for all $t$, which implies that (V) holds for the new partition.
Since $T'=T$, (VI) holds for the new partition.
 
We have that $|B'_t|\geq |B_t|$ for all $t$.  This implies that if the new partition satisfies~(VII), then we have $|B'_i|<c(A\cup B)$.  However since $|B'_i|>|B_i|$, this contradicts maximality of the original partition in (VIII)

For the remainder of the proof of this claim, we may assume that we have $N_r(v_i)\subseteq B$.
There are two cases depending on where the neighbours of $v_i$ lie.

\textbf{Case 1:}  Suppose that $N_r(v_i)\subseteq B_i$.

Let $T'=T-v_i$, $B'_i= B_i+v_i$, and $A'_j=A_j$, $B'_j=B_j$ for other $j$.  
The resulting partition  satisfies (I) since $N_r(v_i)\subseteq B_i$.
Condition (II) implies that $B_i+v_i$ is connected in red. This, together with the fact that the neighbour of $v_i$ in $T$ is connected to $B'_i$ by a red edge implies that  condition (II) holds for the new partition.
The only red component  of the new partition which was not a red component of the old partition is $B_i\cup v$, which is of order at most $c(A\cup B)$ because of $|B_i|<c(A\cup B)$.  This implies that (III) is satisfied.  
Since $f(B_i)=0$, we must have $f(B'_i) = 0$ or $1$, which means that $|f(B'_i)-\frac{1}{2}|=|f(B_i)-\frac{1}{2}|$ and hence the new partition satisfies (IV).
The new partition satisfies (V) since we have $f(A'_0\cup\dots\cup A'_k)=f(A)$.
However $|T'|=|T|-1$, contradicting minimality of the original tree~$T$ in (VI).

\textbf{Case 2:}  Suppose that $N_r(v_i)\cap B_j\neq \emptyset$ for some $j\neq i$.
Let $C$ be a red component of~$B_j$ which intersects $N_r(v_i)$.
By Claim~\ref{largeB} we have $c(B_j\setminus C)<c(A\cup B)$.

There are two subcases, depending on whether $j\leq l$ holds.

\textbf{Case 2.1:} Suppose that  $j> l$.  By Claim~\ref{onelargecomponentinA} there is a red component $C_A\subseteq A$ of order $c(A\cup B)$.  
 Let $B'_i=B_i\cup C$, $B'_j=(B_j\cup C_A) \setminus C$,  $T'=T$ and $A'_t=A_t\setminus C_A$, $B'_t=B_t$ for all other $t$.  

 The resulting partition trivially satisfies (I).
 Condition (II) follows from the fact that $B_i$ is connected to $C$ by a red edge.
 We have $A'_0\cup\dots\cup A'_k\cup B'_1\cup\dots\cup B'_k=A\cup B$ which implies that the new partition satisfies~(III).  
 Using $|B_i|,|B_j\setminus C|<c(A\cup B)$ we obtain that $f(B'_i)=f(B'_j)\leq 1$ and $f(B'_t)=f(B_t)$ otherwise.  This implies that $\sum_{t=1}^k|f(B'_t)-\frac{1}{2}|=\sum_{t=1}^k|f(B_t)-\frac{1}{2}|$, and so the new partition satisfies (IV).  However, we have $f(A'_0\cup\dots\cup A'_k)= f(A)-1$, contradicting minimality of the original partition in (V).

\textbf{Case 2.2:} Suppose that $j\leq l$.
Since $i\neq j$, this implies that we have $l\geq 2$.

Let $u_i$ be a red neighbour of $v_i$  in $C$.
By  (II), $v_j$ has a red neighbour $u_j$ in $C$.  There must be a red path $P$ between $u_i$ and $u_j$ contained in $C$.

Notice that joining $T$ and $P$ using the edges $u_iv_i$ and $u_jv_j$ produces a graph $T_1$ which has $l-2$ leaves and exactly one cycle (which passes thorough $P$.)
By Claim~\ref{onelargecomponentinA} $A$ contains a red component $C_A$ of order $c(A\cup B)$. By red-connectedness of $K_n$, there must be some edge $xv'_j$ between $x \in T$ and a vertex $v'_j\in C_A$.

We construct a tree $T'$ and sets $A'_t$ and $B'_t$ as follows.
\begin{itemize}
 \item Suppose that $x\neq v_t$ for any $t\in\{1,\dots ,l\}$.  In this case we let $T_2$ be the graph with vertices $T_1 + v'_j$, formed from $T_1$ by adding the edge $xv'_j$.  Notice that $T_2$ has $l-1$ leaves and exactly one cycle.  Therefore, the cycle in $T_2$ must contain a vertex $y$ of degree at least $3$.  Let $v'_i$ be a neighbour of $y$ on the cycle.  We let $T'$ be the tree formed from $T_2$ by removing the edge $yv'_i$.  The leaves of $T'$ are $\{v_1, \dots, v_l\}\setminus\{v_i,v_j\}$, $v'_j$ and possibly $v'_i$ (depending on whether the degree of $v'_i$ in $T_2$ is 2 or not.)
 
 We also let $A'_0=A\cup B_i \cup B_j\setminus P- v'_j$, $B_i=B_j=\emptyset$, and  $A_t=\emptyset$, $B'_t=B_t$, $v'_t=v_t$ for $t\neq i,j$.  

 \item Suppose that $x= v_s$ for some $s\in\{1,\dots ,l\}$ and $f(B_s)=1$.  In this case,  Claim~\ref{largeB} implies that~$B_s$ is connected. Let $v'_s$ be a neighbour of $x$ in $B_s$. 
 Let $T_2$ be the graph with vertices $T_1 + v'_j+ v'_s$, formed from $T_1$ by adding the edges $xv'_j$ and $xv'_s$.
  As before $T_2$ has $l-1$ leaves and exactly one cycle, which contains a vertex $y$ of degree at least $3$.  Let $v'_i$ be a neighbour of $y$ on the cycle.  We let $T'$ be the tree formed from $T_2$ by removing the edge $yv'_i$.  The leaves of $T'$ are $\{v_1, \dots, v_l\}\setminus\{v_i,v_j,v_s\}$, $v'_j$, $v'_s$ and possibly $v'_i$ (depending on whether the degree of $v'_i$ in $T_2$ is 2 or not.)
 
 We also let $A'_0=A\cup B_i \cup B_j\setminus P- v'_j$, $B_i=B_j=\emptyset$, $B'_s=B_s-v'_s$ and  $A_t=\emptyset$, $B'_t=B_t$, $v'_t=v_t$ for $t\neq i,j,s$. 

 \item Suppose that $x= v_s$ for some $s\in\{1,\dots ,l\}$ and $f(B_s)=0$.  
 Let $T_2$ be the graph with vertices $T_1 + v'_j$, formed from $T_1$ by adding the edge $xv'_j$.
  Then $T_2$ has $l-2$ leaves and exactly one cycle, which contains a vertex $y$ of degree at least $3$.  Let $v'_i$ be a neighbour of $y$ on the cycle.  We let $T'$ be the tree formed from $T_2$ by removing the edge $yv'_i$.  The leaves of $T'$ are $\{v_1, \dots, v_l\}\setminus\{v_i,v_j,v_s\}$, $v'_j$ and possibly $v'_i$ (depending on whether the degree of $v'_i$ in $T_2$ is 2 or not.)
 
 We also let $A'_0=A\cup B_i \cup B_j\cup B_s\setminus P- v'_j$, $B_i=B_j=B_s=\emptyset$, and $A_t=\emptyset$, $B'_t=B_t$, $v'_t=v_t$ for  $t\neq i,j,s$.  
\end{itemize}

Clearly the new partition satisfies (I).  
It is easy to see that for all $t$ for which $v'_t$ is defined above, $v'_t$ is connected to all the red components of $B'_t$, so the new partition satisfies (II).

Since $A'_0\cup\dots \cup A'_k\cup B'_1\cup \dots\cup B'_k\subseteq A\cup B$, we must have $c(A'_0\cup\dots \cup A'_k\cup B'_1\cup \dots\cup B'_k)\leq c(A\cup B)$ and hence the new partition satisfies (III).
Since for all $t$, we have $B'_t\subseteq B_t$, the new partition satisfies (IV).
Recall that have $c(B_j\setminus C)<c(A\cup B)$, which combined with the fact that $P$ is nonempty and $|C|\leq c(A\cup B)$ implies that $c(B_j\setminus P)<c(A\cup B)$.  This, combined with the fact that $c(B_i)< c(A\cup B)$ (and, in the third of the above cases, $c(B_s)< c(A\cup B)$) implies that the red components of $A'_1\cup \dots \cup A'_k$ are exactly those of $A$, minus $C_A$.  Therefore we have $f(A'_1\cup \dots \cup A'_k)= f(A)-1$, contradicting minimality of the original partition in (V).
\end{proof}

\begin{claim} \label{BUpperBound}
For every $i$, we have $ |B_i| \leq 2c(A\cup B)$.
\end{claim}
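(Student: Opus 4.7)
The plan is to argue by contradiction in the same style as the preceding claims: assume $|B_i| > 2c(A\cup B)$ for some $i$, construct a new partition by shifting a single red component of $B_i$ over to $A_0$, and then verify this new partition meets (I)--(VIII) while strictly decreasing $\sum_t |B_t|$, contradicting the minimality condition (IX).

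First I would use Claims~\ref{onecomponent} and~\ref{largeB} to pin down the structure of $B_i$. Since $|B_i| > 2c(A\cup B) > c(A\cup B)$, Claim~\ref{largeB} rules out $f(B_i) = 1$, and Claim~\ref{onecomponent} then forces $f(B_i) = 0$. Consequently every red component of $B_i$ has order strictly less than $c(A\cup B)$. I would then pick any such red component $C \subseteq B_i$ and define $B'_i := B_i \setminus C$, $A'_0 := A_0 \cup C$, $T' := T$, and leave all other $A_t$, $B_t$, $v_t$ unchanged.

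Next I would walk through conditions (I)--(VIII) for the new partition. Condition (I) holds because all edges between $B_i$ and $A_0 \cup \bigcup_{j\neq i}(A_j\cup B_j)$ are blue (so, in particular, edges out of $C$ to the rest of $A$), and (II) holds because $B'_i \subseteq B_i$ means $v_i$ is still joined to every red component of $B'_i$. Since $A' \cup B' = A \cup B$ as sets, the quantity $c(A\cup B)$ is unchanged, giving (III). For (IV) and (V), the component $C$ kept its identity as a red component (its outside edges are all blue) and has order $< c(A\cup B)$, so $f(B'_i) = f(B_i) = 0$ and $f(A'_0) = f(A_0)$, so neither extremal quantity changes. (VI) is trivial as $T' = T$. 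For (VII) the key estimate is
\[
|B'_i| \;=\; |B_i| - |C| \;>\; 2c(A\cup B) - c(A\cup B) \;=\; c(A\cup B),
\]
so the set $\{j : |B'_j| \geq c(A\cup B)\}$ is the same as before, and the sum in (VIII) is also unchanged because $B'_j = B_j$ for $j \neq i$ while $|B'_i| \geq c(A\cup B)$ keeps $i$ out of both sums.

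Finally, I would observe that $\sum_t |B'_t| = \sum_t |B_t| - |C| < \sum_t |B_t|$, contradicting the choice of the original partition under condition (IX). The argument is essentially routine—the only delicate point is the inequality $|B'_i| > c(A\cup B)$, which is exactly where the hypothesis $|B_i| > 2c(A\cup B)$ is used and is what makes it safe to discard a whole red component without losing condition (VII).
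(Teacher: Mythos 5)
Your proof is correct and follows essentially the same route as the paper: remove a red component $C$ of $B_i$ with $|C|<c(A\cup B)$ (the paper gets its existence directly from Claim~\ref{onecomponent}, you via Claims~\ref{onecomponent} and~\ref{largeB}), move it into $A_0$, check that (I)--(VIII) are preserved, and contradict the minimality of $\sum_t|B_t|$ in (IX). The verification details, including the key observation that $|B'_i|\geq c(A\cup B)$ keeps (VII) and (VIII) intact, match the paper's argument.
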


\begin{proof}
Suppose that $B_i> 2c(A\cup B)$.  Combining this with Claim~\ref{onecomponent}, means that there is a red component, $C$ in $B_i$ satisfying $|C|<c(A\cup B)$.
Let $B'_i=B_i\setminus C$,  $A'_0=A_0\cup C$,  and $A'_t=A_t$, $B'_t=B_t$, $T'=T$  otherwise.

The new partition satisfies (I) -- (II) trivially.
It is easy to see that $c(A'_t)=c(A_t)$ and $c(B'_t)=c(B_t)$ for every $t$ which implies that (III) holds for the new partition.
Also we have $f(A'_t)=f(A_t)$ and $f(B'_t)=f(B_t)$ for every $t$ which implies that (IV) -- (V) hold for the new partition.
Since $T'=T$, (VI) holds for the new partition.
Since $|B'_t|=|B_t|$ for $t\neq i$ and $|B'_i|\geq c(A\cup B)$, the new partition satisfies (VII) and (VIII).

However, we have that $|B'_i|<|B_i|$ which contradicts minimality of the original partition in (IX).
\end{proof}

We now prove the theorem.

For each $i=1, \dots, k$ we define a set $N_i\subseteq A\cup B$.
If $i\leq l$, let $N_i=N_r(v_i)$.  If $i>l$, let $N_i=\bigcup_{v\in T} N_r(v)$.

We will show that the graph $K_n\setminus T$, together with the sets $A_0, \dots, A_k$, $B_1, \dots, B_k$, and $N_1, \dots, N_k$ satisfies  conditions (\ref{Con:Partition}) -- (\ref{Con:Last}) of Lemma~\ref{Balancedlemma}.

Condition (\ref{Con:Partition}) follows from the definition of $A_0, \dots, A_k$, and $B_1, \dots, B_k$.  Condition~(\ref{Con:Blue}) follows immediately from (I).
Condition (\ref{Con:N}) follows from (II) whenever $i\leq l$ and from red-connectedness of $K_n$ whenever $i\geq k+1$.
Condition (\ref{Con:A}) follows from the fact that we are assuming (\ref{Aordered}).

Combining Claims~\ref{Abound} and \ref{BLowerBound} implies that  we have $|B_i|+|A_i|\geq c(A\cup B)+ |A_i|\geq |A_0|$ for all $i$. This proves condition (\ref{Con:AB}) of Lemma~\ref{Balancedlemma}.

Combining Claims~\ref{BLowerBound} and ~\ref{BUpperBound} implies that we have $2|B_i|\geq 2c(A\cup B)\geq |B_j|$ for all $i$ and $j$ . This proves condition (\ref{Con:Last}) of Lemma~\ref{Balancedlemma}.

Therefore, the graph  $K_n\setminus T$, together with the sets $A_0, \dots,$ $A_k$, $B_1, \dots,$ $B_k$, and $N_1, \dots,$ $N_k$ satisfies all the conditions of Lemma~\ref{Balancedlemma}.  By Lemma~\ref{Balancedlemma}, $K_n\setminus T$ can be partitioned into paths $P_1, \dots, P_k$ starting in $N_1, \dots, N_k$ and a balanced $(k+1)$-partite graph $H$.  For each $i$, the path $P_i$ can be joined to $T$ to obtain the required partition of $K_n$ into a tree with at most $k$ leaves $T\cup P_1\cup \dots\cup P_k$ and a balanced $(k+1)$-partite graph $H$.
\end{proof}

\section{Ramsey Numbers}\label{SectionRamseyNumbers}
In this section, we use the results of the previous section to determine the the value of the Ramsey number of a path versus certain other graphs.

First we determine  $R(P_n, \kPartite{m}{t})$ whenever $m\equiv 1 \pmod{n-1}$.
\begin{theorem}
If $m\equiv 1 \pmod{n-1}$  then we have
 $$R(P_n, \kPartite{m}{t}) = (t - 1)(n - 1) + t(m - 1) + 1.$$
\end{theorem}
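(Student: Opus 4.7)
The plan is to prove the equality by establishing matching upper and lower bounds, with the upper bound coming from Theorem~\ref{PathMultipartite} and the lower bound from an explicit construction generalising the one used for $R(P_n,K_t)$.

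For the upper bound, set $N=(t-1)(n-1)+t(m-1)+1$ and consider an arbitrary 2-edge-colouring of $K_N$. I would apply Theorem~\ref{PathMultipartite} with $k=t-1$ to obtain a partition of $K_N$ into $t-1$ disjoint red paths $P_1,\ldots,P_{t-1}$ and a disjoint blue balanced complete $t$-partite graph $\kPartite{s}{t}$, where $s$ is the common part size. If some $|P_i|\geq n$ then we have a red $P_n$ and are done. Otherwise $\sum_i|P_i|\leq (t-1)(n-1)$, so $ts=N-\sum_i|P_i|\geq t(m-1)+1$, which forces $s\geq m$ (since $s$ is an integer and $s\geq m-1+1/t$). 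Then $\kPartite{s}{t}$ contains a blue $\kPartite{m}{t}$.

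For the lower bound, I would exhibit a colouring of $K_{N-1}$ avoiding both a red $P_n$ and a blue $\kPartite{m}{t}$. Write $m-1=q(n-1)$ using the hypothesis $m\equiv 1\pmod{n-1}$, so that $N-1=(t-1)(n-1)+tq(n-1)=(t-1+tq)(n-1)$. Partition the $N-1$ vertices into $t-1+tq$ classes of size exactly $n-1$, colour all edges inside a class red and all edges between different classes blue. Each red connected component is a $K_{n-1}$, which contains no $P_n$. For the blue side, suppose for contradiction that there is a blue $\kPartite{m}{t}$ with parts $A_1,\ldots,A_t$. For each $i$, let $S_i$ be the set of red cliques that $A_i$ meets; because any two vertices in the same red clique are joined by a red edge, the families $S_1,\ldots,S_t$ must be pairwise disjoint. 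Since $|A_i|=m=q(n-1)+1$ and each red clique has $n-1$ vertices, pigeonhole gives $|S_i|\geq\lceil m/(n-1)\rceil=q+1$. Hence the total number of red cliques is at least $t(q+1)=t-1+tq+1$, contradicting the fact that we only have $t-1+tq$ cliques.

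The two bounds combine to give $R(P_n,\kPartite{m}{t})=(t-1)(n-1)+t(m-1)+1$. There is no real obstacle here: the upper bound is an immediate corollary of the partitioning Theorem~\ref{PathMultipartite} together with a one-line pigeonhole, and the lower bound is a direct counting argument on an explicit colouring; the only place to be careful is in verifying that the divisibility hypothesis $m\equiv 1\pmod{n-1}$ is exactly what is needed to make $\lceil m/(n-1)\rceil=q+1$ and to make the number of red cliques an integer, so that the construction has exactly $(t-1)(n-1)+t(m-1)$ vertices.
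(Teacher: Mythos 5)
Your proposal is correct and follows essentially the same route as the paper: the upper bound via Theorem~\ref{PathMultipartite} with $k=t-1$ plus a pigeonhole/balancedness argument, and the lower bound via the same extremal colouring of $K_{(t-1)(n-1)+t(m-1)}$ into $t-1+t(m-1)/(n-1)$ disjoint red copies of $K_{n-1}$ with blue edges between them, counting that each part of a blue $\kPartite{m}{t}$ would need its own $q+1$ red cliques. Your write-up of the counting step (disjointness of the clique families $S_i$) is in fact slightly more explicit than the paper's.
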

\begin{proof}
For the upper bound, apply Theorem~\ref{PathMultipartite} to the given 2-edge-coloured complete graph on $(t - 1)(n - 1) + t(m - 1) + 1$ vertices.  This gives us $t-1$ red paths and a blue balanced complete $t$-partite graph which, cover all the vertices of $K_{(t - 1)(n - 1) + t(m - 1) + 1}$.  By the Pigeonhole Principle either one of the paths has order at least $n$ or the complete $t$-partite graph has order at least $t(m-1)+1$.  Since the complete $t$-partite graph is balanced, if it has order more than $t(m-1)+1$, then it must have at least $tm$ vertices.  

For the lower bound, consider a colouring of the complete graph on $(t - 1)(n - 1) + t(m - 1)$ vertices consisting of $(t - 1)+ t\ILF{(m - 1)}{(n-1)} $ disjoint red copies of~$K_{n-1}$ and all other edges coloured blue.  The condition $m\equiv 1 \pmod{n-1}$  ensures that we can do this.  Since all the red components of the resulting graph have order at most $n-1$, the graph contains no red $P_n$.   The graph contains no a blue $\kPartite{m}{t}$, since every partition of such a graph would have to intersect at least $(m-1)/(n-1)+1$ of the red copies of $K_{n-1}$ and there are only $(t - 1)(n - 1) + t(m - 1)$ of these.
\end{proof}

In the remainder of this section we will prove Theorem~\ref{PathPowerRamsey}.  First we will use Theorem~\ref{PathMultipartite} and Corollary~\ref{PathMultipartiteConnected} to find upper bounds on $R(P_n, P_m^t)$.
\begin{lemma}\label{WeakRamsey}
The following statements are true.
 \begin{enumerate}[(a)]
  \item $R(P_n, P_m^t)\leq (n-2)t+m$ for all $n, m$ and $t\geq 1$.
  \item Suppose that $t\geq 2$ and $n, m\geq 1$.  Every $2$-edge-coloured complete graph on $(n-1)(t-1)+m$ vertices which is connected in red contains either a red $P_n$ or a blue $P_m^t$.
  \end{enumerate}
\end{lemma}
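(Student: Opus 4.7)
My plan is to derive both parts from Corollary~\ref{PathPathPower}, with part (b) being an immediate application and part (a) requiring an additional endpoint-absorption step to close a $t$-vertex deficit.

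For part (b), I will apply the second (red-connected) statement of Corollary~\ref{PathPathPower} with $k = t-1$ (valid since $t \geq 2$). This covers $K_N$, with $N = (n-1)(t-1) + m$, by $t - 1$ disjoint red paths together with a blue $t$-th power of a path $Q$. If some red path has at least $n$ vertices, we have a red $P_n$. Otherwise each red path has at most $n-1$ vertices, so the red paths cover at most $(t-1)(n-1)$ vertices in total, leaving $|V(Q)| \geq N - (t-1)(n-1) = m$; the blue $P_{|V(Q)|}^t$ then contains a blue $P_m^t$.

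For part (a), I will apply the first statement of Corollary~\ref{PathPathPower} with $k = t$ to $K_N$ with $N = (n-2)t + m$, producing $t$ disjoint red paths $P_1, \dots, P_t$ and a blue $t$-th power of a path $Q$ covering $K_N$. Supposing no $|P_i| \geq n$, the naive count gives only $|V(Q)| \geq N - t(n-1) = m - t$, falling short of $m$ by $t$. I will close this gap via the following observation: call $P_i$ \emph{tight} if $|P_i| = n-1$. Because any red edge from an endpoint of a tight $P_i$ to a vertex outside $V(P_i)$ would create a red $P_n$, both endpoints of every tight $P_i$ are blue-adjacent to every vertex in $V(K_N) \setminus V(P_i)$. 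Let $T = \{i : |P_i| = n-1\}$ and $\tau = |T|$, and pick one endpoint $u_i$ of each tight $P_i$. I claim that the set $\{u_i\}_{i \in T} \cup V(Q)$, listed with the $u_i$'s first (in any order) followed by the native $P^t$-ordering of $V(Q)$, carries a blue $P_{|V(Q)| + \tau}^t$: every pair of $u_i$'s is blue-adjacent because each lies outside the other's red path; every pair $(u_i, q)$ with $q \in V(Q)$ is blue-adjacent because $V(Q)$ is disjoint from $V(P_i)$; and pairs inside $V(Q)$ at distance at most $t$ are blue by the choice of $Q$. Splitting the red contributions as $\sum |P_i| \leq \tau(n-1) + (t-\tau)(n-2)$ gives $|V(Q)| \geq m - \tau$, so $|V(Q)| + \tau \geq m$ and we obtain the required blue $P_m^t$.

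The main obstacle I anticipate is the endpoint-absorption step in (a): one must verify both that the concatenated ordering really does present a $t$-th power of a path (i.e., every pair within distance $t$ in the new ordering is blue-adjacent) and that the split counting of $\sum |P_i|$ exactly compensates for the $\tau$ endpoints moved into the blue structure. Everything else is bookkeeping on top of the partitioning results already in hand.
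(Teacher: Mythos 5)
Your proposal is correct and follows essentially the same route as the paper: part (b) is the immediate application of the red-connected covering result with $k=t-1$, and part (a) uses the same counting of paths of order exactly $n-1$ together with the observation that endpoints of such paths are blue-adjacent to everything outside them, so they can be absorbed into the blue $t$-th power of a path. Your $\tau$ plays exactly the role of the paper's $i$, and the counting and absorption steps match the paper's argument.
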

\begin{proof}
 For part (a), notice that by Theorem~\ref{PathMultipartite}, we can partition a 2-edge-coloured $K_{(n-2)t+m}$ into $t$ red paths $P_1, \dots, P_t$ and a blue $t$th power of a path $P^t$.  Suppose that there are no red paths of order $n$ in  $K_{(n-2)t+m}$.  Suppose that  $i$ of the paths $P_1, \dots, P_t$  are of order $n-1$.    Without loss of generality we may assume that these are the paths $P_1, \dots, P_i$.  We have $|P^t|+(n-~2)(t-i)+(n-1)i\geq|P^t|+|P_1|+\dots+|P_t|=(n-2)t+m$ which implies  $i+|P^t|\geq m$.   For each $j$, let $v_j$ be one of the endpoint of $P_j$.  Notice that since there are no red paths of order $n$ in $K_{(n-2)t+m}$, all the edges in $\{v_1, \dots, v_i, p\}$ are blue for any $p \in P^t$.  This allows us to extend $P^t$ by adding $i$ extra vertices $v_1, \dots, v_i$ to obtain a $t$th power of a path of order $m$.
 
 Part (b) follows immediately from  Corollary~\ref{PathMultipartiteConnected} and the fact that a balanced t-partite graph contains a spanning $(t-1)$st power of a path.
\end{proof}

The following simple lemma allows us to join powers of paths together.
\begin{lemma}\label{Combinepaths}
Let $G$ be a graph.  Suppose that $G$ contains a $(k-i)$th power of a path, $P^{k-i}$, and an $(i-1)$st power of a path, $Q^{i-1}$, such that the following hold.
\begin{enumerate}[\normalfont(i)]
\item All the edges between $P^{k-i}$ and $Q^{i-1}$ are present.
\item $|P^{k-i}|\geq  (k-i+1)\left\lfloor \frac{n}{k+1} \right \rfloor$.
\item $|Q^{i-1}|\geq  i\left\lfloor \frac{n}{k+1} \right \rfloor$.
\item $|P^{k-i}|+|Q^{i-1}|\geq n.$
\end{enumerate}
Then $G$ contains a $k$th power of a path on $n$ vertices.
\end{lemma}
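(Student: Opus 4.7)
My plan is to prove Lemma~\ref{Combinepaths} by an explicit interleaving construction. Label the vertices of $P^{k-i}$ as $p_1,\ldots,p_{|P^{k-i}|}$ in the order they appear along the $(k-i)$th power of a path, and label the vertices of $Q^{i-1}$ as $q_1,\ldots,q_{|Q^{i-1}|}$ analogously. Writing $n=q(k+1)+r$ with $0\le r\le k$, I will order $n$ vertices of $G$ as $r_1,\ldots,r_n$ using a pattern of period $k+1$ in which each complete period contains $k-i+1$ slots filled by consecutive vertices of $P^{k-i}$ (in path order) and $i$ slots filled by consecutive vertices of $Q^{i-1}$ (in path order).

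To verify that $r_1,\ldots,r_n$ spans a $k$th power of a path in $G$, it suffices to check that every pair of positions $a<b$ with $b-a\le k$ gives an edge $r_ar_b$ of $G$. A pair of different types is handled by hypothesis (i). For two $P$-vertices, the periodic structure guarantees that any $k+1$ consecutive positions contain at most $k-i+1$ $P$-positions, so the two $P$-indices involved lie in a window of at most $k-i+1$ consecutive integers; hence they differ by at most $k-i$ and the edge is present in $P^{k-i}$. The symmetric argument, using the bound $i$ on $Q$-positions per $(k+1)$-window, handles two $Q$-vertices.

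The remaining step is to ensure the counts fit. Let $j$ denote the number of $P$-positions inside the partial last period of length $r$; by permuting which of the $k-i+1$ $P$-slots and $i$ $Q$-slots within a single period form the tail, $j$ can be made any integer in $[\max(0,r-i),\min(r,k-i+1)]$. The construction uses $q(k-i+1)+j$ vertices of $P^{k-i}$ and $qi+(r-j)$ vertices of $Q^{i-1}$, so I must choose $j$ satisfying $j\le A:=|P^{k-i}|-q(k-i+1)$ and $r-j\le B:=|Q^{i-1}|-qi$. Hypotheses (ii) and (iii) give $A,B\ge 0$, while hypothesis (iv) gives $A+B\ge r$; a pairwise comparison of the three upper and three lower bounds of the admissible interval
\[
\bigl[\max(0,\,r-i,\,r-B),\,\min(r,\,k-i+1,\,A)\bigr]
\]
shows it always contains an integer, with (iv) used to obtain the crucial inequality $A\ge r-B$.

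The step I expect to be the main obstacle is precisely this final counting check: hypotheses (ii)--(iv) are used tightly, and a short case split (depending on whether $r$ is at most $k-i+1$ or greater, and on which of $A$ or $B$ has the slack) is needed to confirm that the admissible interval for $j$ is nonempty in all regimes. Once $j$ is fixed, the construction is immediate and the verification above supplies the rest.
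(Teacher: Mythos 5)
Your construction is essentially the paper's: the paper also proves the lemma by an explicit block interleaving (it first trims so that $|P^{k-i}|+|Q^{i-1}|=n$, then lists the $Q$-remainder, followed by $\lfloor n/(k+1)\rfloor$ blocks of $k-i+1$ consecutive $P$-vertices and $i$ consecutive $Q$-vertices, followed by the $P$-remainder). The genuine gap is exactly the step you flag and then assert: the nonemptiness of the interval $\bigl[\max(0,r-i,r-B),\min(r,k-i+1,A)\bigr]$ does \emph{not} follow from (ii)--(iv). The comparisons $0\le A$ and $r-B\le A$ are fine, but the comparisons $r-i\le A$ and $r-B\le k-i+1$ fail in general, because (ii) only gives $A\ge 0$ and nothing bounds $B$ (or $A$) from above. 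Concretely, take $k=5$, $i=2$, $n=9$ (so $q=1$, $r=3$) with $|P^{3}|=4$ and $|Q^{1}|=5$: hypotheses (ii)--(iv) hold, yet $A=0$, $B=3$, and your interval is $[\max(0,1,0),\min(3,4,0)]=[1,0]=\emptyset$. Worse, in this regime the conclusion itself is false: for $G=P_4^3\vee P_5$ (a $K_4$ completely joined to a path on five vertices) all of (i)--(iv) hold, but $G$ has independence number $3$ while $P_9^5$ has independence number $2$, so $G$ contains no $P_9^5$. Hence no choice of $j$, and indeed no construction at all, can close this gap from (i)--(iv) alone; the statement is only valid when (after trimming to $|P^{k-i}|+|Q^{i-1}|=n$) the two orders exceed the bounds in (ii) and (iii) by less than $k-i+1$ and $i$ respectively. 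This is precisely the point where the paper's own proof silently assumes $|P^{k-i}|=r_P+(k-i+1)\lfloor n/(k+1)\rfloor$ and $|Q^{i-1}|=r_Q+i\lfloor n/(k+1)\rfloor$, an assumption that does hold in the regime where the lemma is applied in Section 4 (there your interval is nonempty, e.g.\ in Case 1 one has $A=r$, $B=i-1$), but is not a consequence of (ii) and (iii). So your write-up, like any correct proof, needs that extra hypothesis made explicit; as it stands the ``short case split'' you promise cannot succeed.

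A secondary, fixable point: the permutation of slots must be applied to the repeating period itself, with the tail a prefix of that common pattern, not to the tail alone. If only the last $r$ positions are rearranged, a window of length at most $k+1$ straddling the boundary between the last full period and the tail can contain more than $k-i+1$ $P$-positions or more than $i$ $Q$-positions, and then the two vertices involved may be too far apart along $P^{k-i}$ or $Q^{i-1}$ for the required edge to exist. With one pattern used globally your window argument is correct, and $j$ then ranges over $[\max(0,r-i),\min(r,k-i+1)]$ exactly as you claim.
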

\begin{proof}
Without loss of generality, we may assume that $P^{k-i}$ and $Q^{i-1}$ are the shortest such paths contained in $G$.  We claim that this implies that we have  $|P^{k-i}|+|Q^{i-1}|= n$.  Indeed otherwise (iv) implies that $|P^{k-i}|+|Q^{i-1}|>(k+1)\left\lfloor \frac{n}{k+1} \right \rfloor$, and hence we could remove an endpoint from one of the paths, whilst keeping (ii) and (iii) true.

Let $p_1, \dots, p_{|P^{k-i}|}$ be the vertices of $P^{k-i}$ and $q_1, \dots, q_{|Q^{i-1}|}$ be the vertices of $Q^{i-1}$.  
For convenience set $r_P=|P^{k-i}|\pmod{k-i+1}$ and $r_Q=|q^{i-1}|\pmod{i}$. Together with (ii) and (iii), this ensures that we have  $|P^{k-i}|= r_P + (k-i+1)\left\lfloor\frac{n}{k+1}\right\rfloor$ and $|Q^{i-1}|= r_Q + i\left\lfloor\frac{n}{k+1}\right\rfloor$.  
It is easy to see that the following sequence of vertices is a $k$th power of a path on $n$ vertices.

\begin{align*}
& q_{1},\dots q_{r_Q} \\
& p_1, \dots, p_{k-i+1}, q_{r_Q+1}, \dots, q_{r_Q+i} \\
& p_{k-i+2}, \dots, p_{2(k-i+1)},  q_{r_Q+i+1}, \dots, q_{r_Q+2i} \\
& \ \ \ \ \ \ \ \ \ \ \ \ \ \ \ \vdots \\
& p_{(k-i+1)\left(\left\lfloor\frac{n}{k+1}\right\rfloor-1\right)+1}, \dots, p_{(k-i+1)\left\lfloor\frac{n}{k+1}\right\rfloor},
q_{r_Q+(i-1)\left(\left\lfloor\frac{n}{k+1}\right\rfloor-1\right)+1}, \dots, q_{r_Q+i\left(\left\lfloor\frac{n}{k+1}\right\rfloor-1\right)}\\
& p_{(k-i+1)\left\lfloor\frac{n}{k+1}\right\rfloor+1},\dots,p_{(k-i+1)\left\lfloor\frac{n}{k+1}\right\rfloor+1+r_P}
\end{align*}
\end{proof}
%To show that this sequence forms a $t$th power of a path, we need to show that every vertex is connected to the next $t$ vertices in the sequence.  Suppose  that we have a vertex $x_{i(t-s+1)+j}$ where $i\leq \left\lfloor\frac{n}{t+1} \right\rfloor -1$ and $1\leq j\leq t-s+1$.  The next $t$ vertices in the sequence are $x_{i(t-s+1)+j+1}, \dots, x_{i(t-s+1)+t-s+1}, \dots, y_{(i+1)s}\dots, y_{(i+1)s}$

We are now ready to prove Theorem~\ref{PathPowerRamsey}.
\begin{proof}[Proof of Theorem~\ref{PathPowerRamsey}.]
For the lower bound $R(P_n, P_n^k)\geq (n-1)k + \left\lfloor \frac{n}{k+1} \right\rfloor$, consider a colouring of $K_{(n-1)k+\left\lfloor \frac{n}{k+1}  \right\rfloor-1 }$ consisting of $k$ disjoint red copies of $K_{n-1}$ and one disjoint red copy of $K_{\left\lfloor \frac{n}{k+1}  \right\rfloor-1}$.  All edges outside of these are blue.  It is easy to see that when $n\geq k+1$, this colouring contains neither a red path on $n$ vertices nor a blue $P_n^k$.

It remains to prove the upper bound $R(P_n, P_n^k)\leq (n-1)k + \left\lfloor \frac{n}{k+1} \right\rfloor$.
Let $K$ be a  2-edge-coloured complete graph on $(n-1)k + \left\lfloor \frac{n}{k+1} \right\rfloor$ vertices.
Suppose that $K$ does not contain any red paths of order $n$.  We will find a blue copy of $P_n ^k$.
 
 Let $C$ be the largest red component of $K$.
  The following claim will give us three cases to consider.
 \begin{claim} \label{RamseyClaim}
  One of the following always holds.
  \begin{enumerate}[\normalfont(i)]
   \item $|C| \geq  2(n-1) - (k-2) \left\lfloor \frac{n}{k+1} \right\rfloor+1.$
   \item There is a set $B$, such that all the edges between $B$ and $V(K)\setminus B$ are blue and also 
   $$n+ \left\lfloor \frac{n}{k+1} \right\rfloor \leq |B|\leq  2(n-1) - (k-2)\left\lfloor \frac{n}{k+1} \right\rfloor.$$
  \item  The vertices of $K$ can be partitioned into $k$ disjoint sets $B_1, \dots, B_k$ such that for $i \neq j$ all the edges between $B_i$ and $B_j$ are blue and we have 
  \begin{equation*}
  |B_1|\geq |B_2| \geq \dots \geq |B_k| \geq \left\lceil \frac{n}{k+1} \right\rceil.
 \end{equation*}
   \end{enumerate}
 \end{claim}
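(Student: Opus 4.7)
Denote the red components of $K$ by $C_1, C_2, \dots, C_s$, ordered so that $|C_1| \geq \dots \geq |C_s|$, and abbreviate $M = n + \lfloor n/(k+1) \rfloor$, $L = 2(n-1) - (k-2)\lfloor n/(k+1) \rfloor + 1$, $m = \lceil n/(k+1) \rceil$, and $N = (n-1)k + \lfloor n/(k+1) \rfloor$. Since all edges between distinct red components are blue, a set $B$ has only blue edges to its complement exactly when $B$ is a union of red components. Hence case~(ii) is the statement that some union of components has size in the window $[M, L-1]$, and in case~(iii) each block $B_i$ must likewise be a union of components.

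The plan is to assume that both case~(i) and case~(ii) fail and derive case~(iii). If $|C_1| \geq L$, case~(i) holds, so assume $|C_1| \leq L - 1$. Assume also that no union of red components has size in $[M, L-1]$. Considering the prefix sums $S_j = |C_1| + \dots + |C_j|$, a short check shows $N > M$ under $n \geq k+1$ and $k \geq 2$, so there is a smallest $j$ with $S_j \geq M$; the gap hypothesis then forces $S_j \geq L$, and hence $|C_j| \geq L - (M-1)$. By monotonicity, each of $|C_1|, \dots, |C_j|$ is at least $L - M + 1$, which a routine calculation shows exceeds $m$.

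Now I aim to produce the partition of case~(iii). Using $|C_i| \leq M - 1$ together with the bound $N > (k-1)(M-1)$ (an easy consequence of $n \geq k+1$), we get $s \geq k$, so partitions into $k$ nonempty groups of components exist. My plan is to build the partition greedily: process the components in decreasing order of size, always adding the next one to the currently smallest bin. The standard ``longest processing time'' property gives $\max_i |B_i| - \min_i |B_i| \leq |C_1| \leq M - 1$, and so $\min_i |B_i| \geq N/k - (M-1)$. This bare bound falls short of $m$ in some parameter ranges and must be sharpened using the gap hypothesis: each $|B_i|$ is itself a union sum of components, and so lies in $[0, M-1] \cup [L, \infty)$.

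The main obstacle is exactly this sharpening, and I would handle it by an exchange argument. Among all valid partitions into $k$ nonempty groups of components, take one that lexicographically minimizes the pair $(\#\{i : |B_i| < m\},\ -\min_i |B_i|)$. If some $|B_i| < m$ in this extremal partition, then by averaging there exists $j$ with $|B_j| > (N - m)/(k-1)$, and since every component has size at most $M - 1$ the group $B_j$ must consist of several components. Moving a carefully chosen component from $B_j$ to $B_i$ should strictly decrease the extremal quantity, producing a contradiction. The delicate step is verifying that this move neither pushes $|B_i|$ into the forbidden window $[M, L-1]$ nor drops $|B_j|$ below the current $\min_i |B_i|$; the structural dichotomy from phase~2 (component sizes either below $M$ or above $L$) is what ultimately lets the correct component be chosen and the exchange go through.
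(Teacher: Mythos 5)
Your reduction of the problem is sound: since all edges between distinct red components are blue, (ii) failing means no union of red components has size in the window $\left[n+\left\lfloor \frac{n}{k+1}\right\rfloor,\; 2(n-1)-(k-2)\left\lfloor \frac{n}{k+1}\right\rfloor\right]$, and (i) failing then forces every single component to have size at most $M-1$ where $M=n+\left\lfloor \frac{n}{k+1}\right\rfloor$. Your phase-2 prefix-sum observation and the count $s\geq k$ are also correct. But the heart of the claim is producing the partition of case (iii), and that is exactly the part you leave as a plan rather than a proof: ``moving a carefully chosen component \ldots should strictly decrease the extremal quantity'' and ``the delicate step is verifying \ldots'' are admissions that the decisive verification is missing. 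As stated, the exchange need not improve your lexicographic potential: if every component of the large bin $B_j$ has size smaller than $m-|B_i|$ (components can be singletons), then moving one component leaves $B_i$ still deficient, so the first coordinate is unchanged, and if the minimum bin size is attained by some other bin the second coordinate does not change either; so the extremal choice yields no contradiction. Nor do you ever make precise how the phase-2 dichotomy (bins and unions avoid $[M,L-1]$) rescues the move, and note that this dichotomy is a hypothesis automatically satisfied by every union of components, so ``pushing $|B_i|$ into the forbidden window'' is not actually a constraint you could violate. In short, the proposal is a plausible programme, not a proof.

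For comparison, the paper avoids any balancing or exchange argument. It takes $B$ to be a union of red components with $|B|\leq n-1+\left\lfloor \frac{n}{k+1}\right\rfloor$ and $|B|$ maximal. If some component $C'$ outside $B$ had $|C'|\leq \left\lceil \frac{n}{k+1}\right\rceil-1$, then $B\cup C'$ would have size at most $2(n-1)-(k-2)\left\lfloor \frac{n}{k+1}\right\rfloor$ (using $n\geq k\left\lfloor \frac{n}{k+1}\right\rfloor+\left\lceil \frac{n}{k+1}\right\rceil$), so it would either contradict the maximality of $B$ or witness (ii). Hence every component outside $B$ has size at least $\left\lceil \frac{n}{k+1}\right\rceil$, and since $|V(K)\setminus B|>(k-2)\bigl(n-1+\left\lfloor \frac{n}{k+1}\right\rfloor\bigr)$ while each component has size at most $n-1+\left\lfloor \frac{n}{k+1}\right\rfloor$, there are at least $k-1$ components outside $B$; grouping them into $k-1$ nonempty unions and adding $B_1=B$ gives (iii) immediately. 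If you want to salvage your approach, you should either adopt this maximal-$B$ argument or replace your potential by one that provably decreases (for instance, tracking the total deficiency $\sum_{i:|B_i|<m}(m-|B_i|)$ and moving a smallest component of a bin of size exceeding $M-1$), and carry the verification out in full.
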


 \begin{proof}
Suppose that neither (i) nor (ii) hold. 

This implies that all the red components in $K$ have order at most $n+ \left\lfloor \frac{n}{k+1}\right\rfloor-1$.
Let $B$ be a subset of $V(K)$ such that the following hold. 
\begin{enumerate}[(a)]
 \item All the edges between $B$ and $V(K)\setminus B$ are blue.
 \item $|B|\leq n-1+ \left\lfloor \frac{n}{k+1} \right\rfloor$.
 \item $|B|$ is as large as possible.
\end{enumerate}
 Suppose that there is a red component $C'$ in $V(K)\setminus B$ of order at most $\left\lceil\frac{n}{k+1} \right\rceil-1$.  Let $B'=B\cup C'$.  Notice that $n\geq  k\left\lfloor \frac{n}{k+1} \right\rfloor  +  \left\lceil \frac{n}{k+1} \right\rceil$ holds for all integers $n, k \geq 0$.  This implies that we have $|B'|=|B|+ |C'|\leq 2(n-1)-(k-2) \left\lfloor \frac{n}{k+1} \right\rfloor$ thich implies that either $B'$ is a set satisfying (a) and (b) of larger order than $B$, or $B'$ satisfies (ii).
 
  Suppose that all the red components in $V(K)\setminus B$ have order at least $\left\lceil \frac{n}{k+1} \right\rceil$.  Since $n\geq 2$, we have 
  \begin{equation}\label{RamseyClaim1}
  |V(K)\setminus B|\geq (n-1)(k-1)>(k-2)\left(n-1+\left\lfloor \frac{n}{k+1} \right\rfloor\right).
  \end{equation}
  Using the fact that all red components of $K$ have order at most $n-1+\left\lfloor \frac{n}{k+1} \right\rfloor$, (\ref{RamseyClaim1}) implies that $V(K)\setminus B$ must have at least $k-1$ components.  Therefore, the components of $V(K)\setminus B$ can be partitioned into $k-1$ sets $B_2, \dots, B_k$ which, together with $B_1=B$, satisfy~(iii).
  \end{proof}
 
We distinguish three cases, depending on which part of Claim~\ref{RamseyClaim} holds.
 
\textbf{Case 1:} 
 If part (i) of Claim~\ref{RamseyClaim} holds, then there must be some $i \leq k-2$, such that we have 
 \begin{equation} \label{CBounds}
 (k-i)(n-1) - i\left\lfloor \frac{n}{k+1} \right\rfloor+1\leq |C| \leq  (k-i+1)(n-1) - (i-1)\left\lfloor \frac{n}{k+1} \right\rfloor.
 \end{equation}
 
 Combining $ (k-i)(n-1) - i\left\lfloor \frac{n}{k+1} \right\rfloor+1\leq |C|$ with part (b) of Lemma~\ref{WeakRamsey}  shows that $C$ must contain a blue $(k-i)$th power of a path, $P^{k-i}$, on  $n- i \left\lfloor \frac{n}{k+1} \right\rfloor$ vertices.  If $i=0$, then $P^{k-i}$ is a copy of $P_n ^k$, and so the theorem holds.  Therefore, we can assume that $i\geq 1$.
 
  Notice that (\ref{CBounds}) implies that we have $|V\setminus C| \geq (i-1)(n-1) + i \left\lfloor \frac{n}{k+1} \right\rfloor$. 
  Combining this with part (a) of Lemma~\ref{WeakRamsey} shows that $V\setminus C$ must contain a blue $(i-1)$st power of a path, $Q^{i-1}$, on $i\left\lfloor \frac{n}{k+1} \right\rfloor+i-1$ vertices.
  
  Since all the edges between $C$ and $V\setminus C$ are blue we can apply Lemma~\ref{Combinepaths} to $P^{k-i}$ and $Q^{i-1}$ in order to find a blue $k$th power of a path on n vertices in $G$.
  
 \textbf{Case 2:}  Suppose that there is some set $B\subseteq V(K)$ such that all the edges between $B$ and $V(K)\setminus B$ are blue and also 
   $$n+ \left\lfloor \frac{n}{k+1} \right\rfloor \leq |B|\leq  2(n-1) - (k-2)\left\lfloor \frac{n}{k+1} \right\rfloor.$$

 Apply Theorem~\ref{PathRamsey} to $B$ in order to find a path, $P$, of order $2\left\lfloor \frac{n}{k+1} \right\rfloor+2$ in $B$.
 
  Notice that we have $|V(K)\setminus B| \geq  (k-2)(n-1)+(k-1)\left\lfloor \frac{n}{k+1} \right\rfloor$.
  Part (a) of Lemma~\ref{WeakRamsey} shows that $V\setminus B$ must contain a blue $(k-2)$nd power of a path,$Q^{k-2}$ , on $(k-2)\left\lfloor \frac{n}{k+1} \right\rfloor+k-2$ vertices.
  
Since all the edges between $B$ and $V\setminus B$ are blue we can apply Lemma~\ref{Combinepaths}  with $i=k-1$ in order to find a blue $k$th power of a path spanning on $n$ vertices in $G$.
  
 \textbf{Case 3:} Suppose that the vertices of $K$ can be arranged into disjoint sets $B_1, \dots, B_k$ such that for $i \neq j$ all the edges between $B_i$ and $B_j$ are blue and we have 
  \begin{equation*}
  |B_1|\geq |B_2| \geq \dots \geq |B_k| \geq \left\lceil \frac{n}{k+1} \right\rceil.
 \end{equation*}
Let $t$ be the maximum index for which $|B_t|>n-1$.  Notice that $|K|\geq  k(n-1)+\left\lfloor \frac{n}{k+1} \right\rfloor$ implies that we have $|B_1|+\dots+|B_t| -t(n-1)\geq \left\lfloor \frac{n}{k+1} \right\rfloor$.  Therefore, for $i\leq t$, we can choose numbers $x_i$ satisfying $0\leq x_i\leq |B_i|-n+1$ for all $i$ and also $x_1+\dots + x_t= \left\lfloor \frac{n}{k+1} \right\rfloor$.  

For each $i\leq t$ we have $|B_i|=n-1+x_i$, which combined with Theorem~\ref{PathRamsey}, implies that $B_i$ contains a blue path $R_i$ of order $2x_i+1$.  Let $r_{i,0}, r_{i,1}, \dots, r_{i,2x_i}$ be the vertex sequence of~$R_i$.
For each $i\in\{1, \dots, t\}$ and $j\neq i$ choose a set $A_{i,j}$ of vertices in $B_j$  satisfying $|A_{i,j}|=x_i$.   
Note that  for $j>t$, the identity $|B_j|\geq \left\lceil \frac{n}{k+1} \right\rceil$ implies that we have
\begin{equation}\label{eq:morethant}
 |A_{1,j}|+\dots +|A_{t,j}|=\left\lfloor \frac{n}{k+1} \right\rfloor\leq |B_j| .
 \end{equation}
For $j\leq t$, the identities $|B_j|\geq n$ and $x_j\leq  \left\lfloor \frac{n}{k+1} \right\rfloor$ imply that we have 
 \begin{equation}\label{eq:lessthant}
  |A_{1,j}|+\dots +|A_{j-1,j}|+|R_j|+|A_{j+1,j}|+\dots+|A_{t,j}|= \left\lfloor \frac{n}{k+1} \right\rfloor+x_j+1\leq |B_j|.
\end{equation}
Now, (\ref{eq:morethant}) and (\ref{eq:lessthant}) imply that we can choose the sets $A_{i,j}$, such that $A_{i,j}$ and $A_{i',j}$ are disjoint for $i\neq i'$.  In addition, for every $j\leq t$, (\ref{eq:lessthant}) implies that we can choose the sets $A_{i,j}$ to be disjoint from $R_j$.  Let $a_{i,j,1}, \dots, a_{i,j,x_i}$ be the vertices of $A_{i,j}$.  If $n\not\equiv 0 \pmod{k+1}$, then the inequalities in both (\ref{eq:morethant}) and (\ref{eq:lessthant}) must be strict, and so there must be at least one vertex contained in $B_i$ outside of $R_i \cup A_{i,1} \cup \dots \cup A_{i,t}$.  Let $b_i$ be this vertex.

For $i=1, \dots, t$ and $j = 1, \dots, x_i$, we will define blue paths $P_{i,j}$ of order $k+1$ as follows.
If $i=1$ and $j\in\{1, \dots, x_1-1\}$, then $P_{i,j}$ has the following vertex sequence.
$$P_{1,j}= r_{1,2j-1}, r_{1,2j}, a_{1,2,j}, a_{1,3,j}, \dots, a_{1,k,j}.$$

If $i=1$ and $j= x_1$, then $P_{i,j}$ has the following vertex sequence.
$$P_{1,x_1}= r_{1,2x_1-1}, r_{1,2x_1}, r_{2,0},  a_{1,3,x_1}, \dots, a_{1,k,x_1}.$$

If $i\in\{2, \dots, t-1\}$ and $j\in\{1, \dots, x_i-1\}$, then $P_{i,j}$ has the following vertex sequence.
$$P_{i,j}= r_{i,2j-1}, a_{i,1,j}, a_{i,2,j}, \dots, a_{i,i-1,j}, r_{i,2j}, a_{i,i+1,j}, a_{i,i+2,j},\dots, a_{i,k,j}.$$

If $i\in\{2, \dots, t-1\}$ and $j= x_i$, then $P_{i,j}$ has the following vertex sequence.
$$P_{i,x_i}= r_{i,2x_i-1}, a_{i,1,x_i}, a_{i,2,x_i}, \dots, a_{i,i-1,x_i}, r_{i,2x_i}, r_{i+1,0} , a_{i,i+2,x_i},\dots, a_{i,k,x_i}.$$

If $i=t$ and $j\in \{1, \dots, x_t\}$, then $P_{i,j}$ has the following vertex sequence.
$$P_{t,j}= r_{t,2j-1},a_{t,1,j} ,  a_{t,2,j}, \dots, a_{t,t-1,j}, r_{t,2j}.$$

If $n\not\equiv 0 \pmod{k+1}$, we also define a path $P_0$ of order $k$ with vertex sequence
$$P_0=r_{1,0}, b_2, b_3, \dots, b_k.$$
If $n\equiv 0 \pmod{k+1}$, let $P_0=\emptyset$.

Notice that the paths $P_{i,j}$ and $P_{i',j'}$ are disjoint for $(i,j)\neq (i',j')$.  Similarly $P_0$ is disjoint from all the paths $P_{i,j}$.  
We have the following
\begin{equation}\label{PathsOrder}
|P_0|+ \sum_{i=1}^k \sum_{j=1}^{x_i} |P_{i,j}|= |P_0| + (k+1)(x_1+ \dots + x_k)= |P_0|+(k+1)\left\lfloor \frac{n}{k+1} \right\rfloor\geq n.
\end{equation}

We claim that the following path is in fact a blue $k$th power of a path.
$$
P=\begin{cases} 
\begin{array}{c}
P_0+ \\
P_{1,1}+ P_{1,2}+ \dots + P_{1,x_t}+\\
P_{2,1}+ P_{2,2}+ \dots + P_{2,x_{2}}+\\
\vdots\\
P_{t,1}+ P_{t,2}+ \dots + P_{t,x_t}. 
\end{array}
\end{cases}
$$
To see that $P$ is a $k$th power of a path one needs to check that any pair of vertices $a,b$ at distance at most $k$ along $P$ are connected by a blue edge.
It is easy to check that for any such $a$ and $b$, either  $a \in B_i$ and $b\in B_j$ for some $i\neq j$ or $a$ and $b$ are consecutive vertices along $P_0$ or $P_{i,j}$ for some $i,j$.  In either case $ab$ is blue implying that $P$ is a blue $k$th power of a path.

The identity (\ref{PathsOrder}) shows that $|P|\geq n$, completing the proof.
\end{proof}

\section{Remarks}\label{SectionRamseyRemarks}
In this section we dicuss some further directions one might take with the results presented in this paper.  

\begin{itemize}
 \item 
It would be interesting to see if there are any other Ramsey numbers which can be determined using the techniques we used in this paper.  

If $G$ is a graph of (vertex)-chromatic number $\chi(G)$, then $\sigma(G)$ is defined to be the smallest possible order of a colour class in a proper $\chi(G)$-vertex colouring of~$G$.  Generalising a construction of Chvatal and Harary, Burr \cite{Burr} showed that if $H$ is a graph and  $G$ is a connected graph and satisfying $|G|\geq \sigma(H)$, then we have
\begin{equation}\label{eqBurr}
R(G,H)\geq (\chi(H)-1)(|G|-1) +\sigma(H)
\end{equation}
This identity comes from considering a colouring consisting of $\chi(H)-1$ red copies of $K_{|G|-1}$ and one red copy of $K_{\sigma(H)-1}$.  Notice that for a $k$th power of a path, we have $\chi(P_n^k)=k+1$ and $\sigma(P_n^k)=\left\lfloor\frac{n}{k+1}\right\rfloor$.  Therefore, Theorem~\ref{PathPowerRamsey} shows that (\ref{eqBurr}) is best possible when $G=P_n$ and $H=P^k_n$.  

It is an interesting question to find other pairs of graphs for which equality holds in~(\ref{eqBurr}) (see \cite{Skokan, Rousseau}).  Allen, Brightwell, and Skokan   conjectured that when $G$ is a path, then equality holds in~(\ref{eqBurr}) for any graph $H$ satisfying $|G|\geq \chi(H)|H|$.
\begin{conjecture}[Allen, Brightwell, and Skokan] \label{SkokanConjecture} 
For every graph $H$, $R(P_n,H)=(\chi(H)-1)(n-1) +\sigma(H)$ whenever $n \geq \chi(H)|H|$.
\end{conjecture}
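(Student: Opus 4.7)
The lower bound $R(P_n, H) \ge (\chi(H)-1)(n-1) + \sigma(H)$ is exactly Burr's inequality (\ref{eqBurr}), so only the matching upper bound needs to be established. Set $\chi = \chi(H)$, $\sigma = \sigma(H)$, $N = (\chi-1)(n-1) + \sigma$, and fix a proper $\chi$-colouring of $H$ whose classes have sizes $c_1 \le c_2 \le \cdots \le c_\chi$ with $c_1 = \sigma$ (such a colouring exists by definition of $\sigma(H)$). Consider a 2-edge-coloured $K_N$ containing no red $P_n$; the goal is to find pairwise disjoint sets $S_1,\dots,S_\chi \subseteq V(K_N)$ with $|S_i| \ge c_i$ and every edge between distinct $S_i,S_j$ blue, as these yield a blue copy of $H$ by placing the $i$-th colour class of $H$ inside $S_i$.

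The plan is to apply Theorem~\ref{PathMultipartite} with $k = \chi - 1$, partitioning $K_N$ into $\chi - 1$ red paths and a blue balanced complete $\chi$-partite graph $A_1 \sqcup \cdots \sqcup A_\chi$ of common part-size $m$, and then to sharpen the partition to an \emph{unbalanced} one whose parts satisfy $|A_i| \ge c_i$. The balanced theorem alone is not enough: since each red path has order at most $n-1$, counting gives only $m \ge \sigma/\chi$, which is typically smaller than the largest class $c_\chi$ of $H$ (e.g.\ for $H = C_5$ one has $c_3 = 2$ while counting gives only $m \ge 1$). The desired unbalanced statement would read: whenever $n \ge \chi|H|$, $K_N$ admits a partition into $\chi - 1$ red paths of order at most $n-1$ and a blue complete $\chi$-partite graph with $|A_i| \ge c_i$. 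The hypothesis $n \ge \chi|H|$ is precisely the slack source, since the red paths can be shortened by a combined total exceeding $|H|$ vertices while each remains of order at most $n - 1$, leaving room to transfer vertices from paths into deficient parts.

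To establish this unbalanced partition theorem, I would follow the extremal framework of Theorem~\ref{TreeMultipartite} and Lemma~\ref{Balancedlemma}, replacing the common target $|A_0|$ (which enforced balance) by individual prescribed targets $c_{\pi(i)}$ matched to parts through an adaptively chosen permutation $\pi$, and guiding the swap arguments by a deficit potential $\Phi = \sum_i \max(0, c_{\pi(i)} - |A_i| - |B_i|)$ that measures how far the current partition is from supporting $H$. The main obstacle is adapting the swap arguments of Claims~\ref{onecomponent}--\ref{BUpperBound}, which crucially exploit the pivot role of $A_0$ as a shared target: once the targets $c_{\pi(i)}$ differ across parts, a swap that helps one deficient part may simultaneously disrupt another, and a direct greedy minimisation of $\Phi$ may cycle. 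I expect the proof will require an auxiliary Hall-type matching lemma asserting that at any extremal partition one can always execute a simultaneous transfer of several path-endpoints into several deficient parts so as to strictly decrease $\Phi$, with the condition $n \ge \chi|H|$ being precisely what makes the corresponding Hall condition hold. Once such a matching lemma is in place, the nested optimality induction of Theorem~\ref{TreeMultipartite} should close the argument.
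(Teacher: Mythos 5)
The statement you are trying to prove is not a theorem of the paper at all: it is Conjecture~\ref{SkokanConjecture}, an open problem of Allen, Brightwell, and Skokan which the paper only records in its concluding remarks, proves no part of beyond the special case $H=P_n^k$ (Theorem~\ref{PathPowerRamsey}), and explicitly describes as something the paper's techniques ``may be useful in approaching.'' So there is no proof in the paper to compare against, and anything you submit must stand as a complete new argument. Yours does not: it is a programme, and the two steps that carry all the weight are left unproven. The ``unbalanced partition theorem'' you formulate---that $K_N$ with $N=(\chi-1)(n-1)+\sigma$ can be partitioned into $\chi-1$ red paths of order at most $n-1$ and a blue complete $\chi$-partite graph with parts of sizes at least $c_1,\dots,c_\chi$---is false as literally stated: in the all-red colouring of $K_N$ the blue graph can contain no edge and the $\chi-1$ short paths cannot cover $N>(\chi-1)(n-1)$ vertices. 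It can only be salvaged under the hypothesis that there is no red $P_n$, at which point it is essentially the conjecture itself, i.e.\ you have restated the problem rather than reduced it.

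The second load-bearing step, the ``Hall-type matching lemma'' allowing simultaneous transfers that strictly decrease your deficit potential $\Phi$, is pure hope: you yourself identify the obstruction (the extremal arguments of Lemma~\ref{Balancedlemma} and Claims~\ref{onecomponent}--\ref{BUpperBound} lean on $A_0$ as a single common benchmark, and with distinct targets $c_{\pi(i)}$ a swap helping one part can damage another, or cycle), but you give no lemma statement, no proof, and no explanation of how $n\ge\chi(H)|H|$ would enter quantitatively beyond the slogan that it provides ``slack.'' Note also that moving vertices off red paths into deficient parts is not free even when slack exists: a vertex deleted from a red path joins a part of the blue multipartite graph only if it sends blue edges to every other part, and controlling this is exactly the difficulty the balanced machinery was built to handle. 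In short, the lower bound (Burr's construction, inequality~(\ref{eqBurr})) is fine, the reduction of the upper bound to an unbalanced covering statement is a reasonable first move, but everything after that is missing, and what is missing is the entire mathematical content of the conjecture.
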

It is easy to see that in order to prove Conjecture~\ref{SkokanConjecture}, it is sufficient to prove it only in the case when $H$ is a (not necessarily balanced) complete multipartite graph.

The techniques used in this paper look like they may be useful in approaching Conjecture~\ref{SkokanConjecture}.  One reason for this is that several parts of the proof of Theorem~\ref{PathPowerRamsey} would have worked if we were looking for the Ramsey number of a path versus a balanced complete multipartite graph insead of a power of a path.

\item
Recall that Lemma~\ref{PathBipartite} only implies part of H\"aggkvist result (Theorem~\ref{Haggkvist}).  However, it is easy to prove an ``unbalanced'' version of Lemma~\ref{PathBipartite} which implies  Theorem~\ref{Haggkvist}.
\begin{lemma}\label{PathBipartiteUnbalanced}
 Suppose that the edges of $K_n$ are coloured with 2 colours and we have an integer $t$ satisfying $0\leq t\leq n$.  Then there is a partition of $K_n$ into a red path and a blue copy of $K_{m, m+t}$ for some integer $m$.
\end{lemma}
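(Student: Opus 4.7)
The plan is to mimic the extremal argument used to prove Lemma~\ref{PathBipartite}, with the extremal function adjusted to track the desired imbalance $t$ rather than balance.  Note first that the trivial partition $P=\emptyset$, $X=\emptyset$, $Y=V(K_n)$ satisfies $|Y|-|X|=n\geq t$, so partitions with $|Y|\geq|X|$ and $|Y|-|X|\geq t$ exist.  Among all partitions of $K_n$ into a red path $P$ and a blue complete bipartite graph $B(X,Y)$ with $|Y|\geq|X|$, choose one such that
\begin{enumerate}[\normalfont (i)]
\item $|Y|-|X|\geq t$,
\item $|Y|-|X|$ is as small as possible subject to (i),
\item $|P|$ is as small as possible subject to (i) and (ii).
\end{enumerate}
We claim $|Y|-|X|=t$, which is precisely the required partition with $m=|X|$.

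Suppose for contradiction $|Y|-|X|\geq t+1$.  The argument splits into the same three cases as the proof of Lemma~\ref{PathBipartite}, each producing a new partition in which the difference drops by exactly one (and thus remains $\geq t$).  First, if $P=\emptyset$, pick any $y\in Y$ and let $P'=\{y\}$, $Y'=Y\setminus\{y\}$, $X'=X$; this is a valid partition with $|Y'|-|X'|=|Y|-|X|-1\geq t$, contradicting (ii).  Otherwise $P$ is nonempty; let $p$ be an endpoint of $P$.  If some edge $py$ with $y\in Y$ is red, then set $P'=P+y$, $Y'=Y\setminus\{y\}$, $X'=X$: the imbalance drops by $1$, still $\geq t$, again contradicting (ii).  Finally, if every edge from $p$ to $Y$ is blue, then $X\cup\{p\}$ and $Y$ are the parts of a blue complete bipartite graph, so setting $P'=P-p$, $X'=X\cup\{p\}$, $Y'=Y$ gives another valid partition with $|Y'|-|X'|=|Y|-|X|-1\geq t$, contradicting~(ii).

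The only points where one has to be a little careful are verifying that each of these moves is legitimate.  In Case (i) we use $|Y|\geq t+1\geq 1$ so $Y\neq\emptyset$; in the red-edge case the move extends a path along a red edge as in the definition of $P+y$; and in the blue-edge case the hypothesis that all edges from $p$ to $Y$ are blue is exactly what is needed to keep the bipartite graph complete after inserting $p$ into $X$.  The inequality $|Y'|\geq|X'|$ is preserved throughout since we start from $|Y|-|X|\geq t+1$.  I expect no real obstacle beyond bookkeeping: condition (iii) is not even used to derive the contradiction (unlike in Lemma~\ref{PathBipartite}), because already (ii) is violated in every case.  Thus $|Y|-|X|=t$, and $B(X,Y)$ is the required blue $K_{m,m+t}$.
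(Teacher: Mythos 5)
Your proof is correct and follows essentially the same route as the paper, which obtains the lemma by rerunning the exchange argument of Lemma~\ref{PathBipartite} with the added constraint $\big||X|-|Y|\big|\geq t$ on the parts. The only (harmless) difference is that you minimise the imbalance $|Y|-|X|$ directly rather than $\max(|X|,|Y|)$, which makes the $|P|$-minimality tiebreaker unnecessary.
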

The proof of this lemma is nearly identical to the one we gave of Lemma~\ref{PathBipartite} in the Section~\ref{SectionRamseyNotation}.  Indeed, the only modification that needs to be made is that we need to add the condition ``$\big||X|-|Y|\big|\geq t$'' on the sets $X$ and $Y$ in the proof of Lemma~\ref{PathBipartite}.

\item
It would be interesting to see whether Theorems~\ref{PathMultipartite} and~\ref{TreeMultipartite} have any applications in the area of partitioning coloured complete graphs.  In particular, given that Lemma~\ref{PathBipartite} played an important role in the proof of the $r=3$ case of Conjecture~\ref{Gyarfas} in \cite{PokrovskiyCycles}, it is possible that Theorems~\ref{PathMultipartite} and~\ref{TreeMultipartite} may help with that conjecture.

Classically, results about partitioning coloured graphs would partition a graph into monochromatic subgraphs which all have the same structure.  For example Theorems~\ref{GerencserGyarfas} and~\ref{GyarfasLehel} partition graphs into monochromatic paths.  Lemma~\ref{PathBipartite} and Theorem~\ref{TreeMultipartite} stand out from these since they partition a 2-edge-coloured complete graph into two monochromatic subgraphs which have very different structure.  It would be interesting to find other natural results along the same lines.  Some results about partitioning a 2-edge-coloured complete graph into a monochromatic cycle and a monochromatic graph with high minimum degree will appear in~\cite{PokrovskiyCycleSparseGraph}.
\end{itemize}

\bigskip\noindent
\textbf{Acknowledgment}

\smallskip\noindent
The author would like to thank his supervisors Jan van den Heuvel and Jozef Skokan for their advice and discussions.

\bibliography{pathpartition}
\bibliographystyle{abbrv}
\end{document}